\theoremstyle{plain}
\newtheorem{thm}{Theorem}[section]
\newtheorem*{thma}{Theorem A}
\newtheorem*{thmb}{Theorem B}
\newtheorem*{thm*}{Theorem}
\newtheorem{lm}[thm]{Lemma}
\newtheorem{cor}[thm]{Corollary}
\newtheorem*{cor*}{Corollary}
\newtheorem{prop}[thm]{Proposition}
\newtheorem*{conj*}{Conjecture}
\theoremstyle{remark}
\theoremstyle{definition}
\newtheorem*{defn*}{Definition}
\newtheorem{Remark}[thm]{Remark}
\newtheorem{I_Remark*}{Remark}
\newtheorem{defn}[thm]{Definition}
\newcommand{\nc}{\newcommand}
\newcommand{\beq}{\begin{equation}}
\newcommand{\eeq}{\end{equation}}
\newcommand{\bpmx}{\begin{pmatrix}}
\newcommand{\epmx}{\end{pmatrix}}
\newcommand{\bbmx}{\begin{bmatrix}}
\newcommand{\ebmx}{\end{bmatrix}}
\newcommand{\wh}{\widehat}
\newcommand{\wtd}{\widetilde}
\newcommand{\beqcd}[1]{\begin{equation*}\label{#1}\tag{#1}}
\newcommand{\eeqcd}{\end{equation*}}
\numberwithin{equation}{section}
\def\parref#1{\ref{#1}}
\def\thmref#1{Theorem~\parref{#1}}
\def\propref#1{Proposition~\parref{#1}}
\def\corref#1{Corollary~\parref{#1}}     
\def\secref#1{\S\parref{#1}}
\def\lmref#1{Lemma~\parref{#1}}
\def\subsecref#1{\S\parref{#1}}
\def\defref#1{Definition~\parref{#1}}
\def\makeop#1{\expandafter\def\csname#1\endcsname
  {\mathop{\rm #1}\nolimits}\ignorespaces}
\DeclareMathAlphabet{\mathpzc}{OT1}{pzc}{m}{it}
\def\makebb#1{\expandafter\def
  \csname bb#1\endcsname{{\mathbb{#1}}}\ignorespaces}
\def\makebf#1{\expandafter\def\csname bf#1\endcsname{{\bf
      #1}}\ignorespaces}
\def\makegr#1{\expandafter\def
  \csname gr#1\endcsname{{\mathfrak{#1}}}\ignorespaces}
\def\makescr#1{\expandafter\def
  \csname scr#1\endcsname{{\EuScript{#1}}}\ignorespaces}
\def\makecal#1{\expandafter\def\csname cal#1\endcsname{{\mathcal
      #1}}\ignorespaces}
\def\doLetters#1{#1A #1B #1C #1D #1E #1F #1G #1H #1I #1J #1K #1L #1M
                 #1N #1O #1P #1Q #1R #1S #1T #1U #1V #1W #1X #1Y #1Z}
\def\doletters#1{#1a #1b #1c #1d #1e #1f #1g #1h #1i #1j #1k #1l #1m
                 #1n #1o #1p #1q #1r #1s #1t #1u #1v #1w #1x #1y #1z}
\def\cA{{\mathcal A}}  
\def\cB{\EuScript B}
\def\cD{\mathcal D}
\def\cF{{\mathcal F}}  
\def\cL{{\mathcal L}}
\def\cH{{\mathcal H}}
\def\cJ{\mathcal J}
\def\cM{\mathcal M}
\def\cO{\mathcal O}
\def\cS{{\mathcal S}}
\def\cf{{\mathcal f}}
\def\cW{{\mathcal W}}
\def\cJ{\mathcal J}
\def\cN{\mathcal N}
\def\cU{\mathcal U}
\def\bfH{\mathbf H}
\def\bff{\mathbf f}
\def\bbI{\mathbb I}
\newcommand{\Z}{\mathbf Z}
\newcommand{\Q}{\mathbf Q}
\newcommand{\R}{\mathbf R}
\newcommand{\C}{\mathbf C}
\newcommand{\A}{\mathbf A}    
\def\fraka{{\mathfrak a}}
\def\etale{{\'{e}tale }}
\def\ot{\otimes}
\def\ol{\overline}  \nc{\opp}{\mathrm{opp}} \nc{\ul}{\underline}
\def\XYmatrix{\xymatrix@M=8pt} 
\def\ncmd{\newcommand}
\ncmd{\xysubset}[1][r]{\ar@<-2.5pt>@{^(-}[#1]\ar@<2.5pt>@{_(-}[#1]}
\ncmd{\XYmatrixc}[1]{\vcenter{\XYmatrix{#1}}}
\ncmd{\xyto}[1][r]{\ar@{->}[#1]}
\ncmd{\xyinj}[1][r]{\ar@{^(->}[#1]}
\ncmd{\xysurj}[1][r]{\ar@{->>}[#1]}
\ncmd{\xyline}[1][r]{\ar@{-}[#1]}
\ncmd{\xydotsto}[1][r]{\ar@{.>}[#1]}
\ncmd{\xydots}[1][r]{\ar@{.}[#1]}
\ncmd{\xyleadsto}[1][r]{\ar@{~>}[#1]}
\ncmd{\xyeq}[1][r]{\ar@{=}[#1]} \ncmd{\xyequal}[1][r]{\ar@{=}[#1]}
\ncmd{\xyequals}[1][r]{\ar@{=}[#1]}
\ncmd{\xymapsto}[1][r]{l\ar@{|->}[#1]}\ncmd{\xyimplies}[1][r]{\ar@{=>}[#1]}
\ncmd{\xyiso}{\ar[r]_-{\sim}}
\def\injxy{\ar@{^(->}}
\newcommand{\pMX}[4]{\begin{pmatrix}
{#1}& {#2}\\
{#3}&{#4}\end{pmatrix} }
\newcommand{\seesaw}[4]{{#1}\ar@{-}[rd]\ar@{-}[d]&{#2}\ar@{-}[d]\\
{#3}\ar@{-}[ru]&{#4}}
\def\cf{\mbox{{\it cf.} }}
\def\x{{\times}}
\newcommand\stt[1]{\left\{#1\right\}}
\def\G{{\rm GL_2}}
\def\SU{{\rm SU}}
\def\x{\times}
\def\b{\bar}
\def\h{\hat}
\def\t{\tilde}
\def\bt{\boxtimes}
\def\mf{\mathfrak}
\def\itPi{\mathit \Pi}
\def\cF{K}
\newtheorem*{propa}{Proposition A}
\newtheorem*{lma}{Lemma A}
\newtheorem*{cora}{Corollary A}
\title[On Deligne's conjecture for certain automorphic L-functions for $\GL(3)\times \GL(2)$]
{On Deligne's conjecture for central values of certain automorphic L-functions on 
$\GL(3)\times \GL(2)$}
\subjclass[2010]{11F41, 11F67}
\author{SHIH-YU CHEN AND YAO CHENG}
\address{Department of Mathematics~\\National Taiwan University ~ \\
No. 1, Sec. 4, Roosevelt Road, Taipei 10617, Taiwan~}
\email{r98221018@ntu.edu.tw}
\address{Department of Mathematics~\\National Taiwan University ~ \\
No. 1, Sec. 4, Roosevelt Road, Taipei 10617, Taiwan~}
\email{ai852obrian@gmail.com}
\date{\today}
\date{\today}
\begin{document}

\begin{abstract}
We prove Deligne's conjecture for central critical values of certain automorphic $L$-functions for 
$\GL(3)\times\GL(2)$. The proof is base on rationality results for central critical values of triple 
product $L$-functions, which follow from establishing explicit Ichino's formulae for trilinear 
period integrals for Hilbert cusp forms on totally real \etale cubic algebras over $\Q$.
\end{abstract}

\maketitle
\tableofcontents

\section{Introduction}
The purpose of this paper is to establish the explicit Ichino formula for twisted triple product 
$L$-functions. As an application of our formula, we establish new cases on the algebraicity of central critical 
values of certain class of automorphic $L$-functions for $\GL(3)\times \GL(2)$ divided by the 
associated Deligne's periods. To begin with, let $f$ and $g$ be elliptic newforms of weights 
$\kappa'$ and $\kappa$, level $\Gamma_0(N_1)$ and $\Gamma_0(N_2)$, respectively. 
We let $L(s,{\rm Sym}^2(g)\otimes f)$ be the motivic $L$-function associated with
${\rm Sym}^2(g)\otimes f $. Put 
\[
w=2\kappa+\kappa'-3;\quad \epsilon=(-1)^{\kappa'/2-1}.
\] 
Denote by $\Omega_f^{\pm}$ the Shimura's periods of $f$ in \cite{Shimura1977} and define the
Deligne's period $\Omega_{f,g} \in \C^{\times}$ by
\begin{align*}
\Omega_{f,g}=\left \{ \begin{array}{ll} (2\pi\sqrt{-1})^{3-3\kappa}(\sqrt{-1})^{1-\kappa'}
\langle f,f\rangle \Omega_f^{\epsilon} & \mbox{ if }
2\kappa\leq \kappa',\\
(2\pi\sqrt{-1})^{2-\kappa-\kappa'}
\langle g,g\rangle^2
\Omega_f^{\epsilon} & \mbox{ if }2\kappa>\kappa'. 
\end{array} \right .
\end{align*}
Our main result is as follows. 
\begin{thma} (Cor. \ref{C:algebraicity for GL_2 times GL_3 unbalanced case} and 
\ref{C:algebraicity for GL_2 times GL_3 balanced case} ) 
Suppose that $N_1$ and $N_2$ are square-free.  
We have
\begin{align*}
\left( 
\frac{L((w+1)/2,{\rm Sym}^2(g)\otimes f)}
{(2\pi\sqrt{-1})^{3(w+1)/2} \Omega_{f,g}^{\epsilon}}
\right )^{\sigma}
&=
\frac{L((w+1)/2,{\rm Sym}^2(g^{\sigma})\otimes f^{\sigma})}{(2\pi\sqrt{-1})^{3(w+1)/2}
\Omega_{f^{\sigma},g^{\sigma}}^{\epsilon} }.
\end{align*}
\end{thma}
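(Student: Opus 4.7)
The strategy is to reduce Theorem A to a Galois-equivariant statement for central critical values of the triple product $L$-function $L(s, f\otimes g\otimes g)$. Since $g$ has trivial central character, the motivic decomposition $g\otimes g = \mathrm{Sym}^{2}(g) \oplus \bfone$ yields the factorization
\[
L(s, f\otimes g\otimes g) \;=\; L(s, \mathrm{Sym}^{2}(g)\otimes f)\cdot L(s, f).
\]
Combined with Shimura's classical algebraicity of $L(n, f)/(2\pi\sqrt{-1})^{n}\Omega_{f}^{\pm}$ at critical integers $n$, it then suffices to establish the corresponding Galois-equivariance for the triple product central value $L((w+1)/2, f\otimes g\otimes g)$.

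For this, we apply the explicit Ichino formula established earlier in the paper to the triple $(f, g, g)$ viewed as a Hilbert cusp form on the split \'etale cubic algebra $E = \Q\times\Q\times\Q$. Schematically, the formula takes the shape
\[
\frac{L(\tfrac{w+1}{2}, f\otimes g\otimes g)}{L(1, \Ad f)\cdot L(1, \Ad g)^{2}} \;=\; (\text{explicit local factors})\cdot \frac{|I(\phi)|^{2}}{\langle f, f\rangle\,\langle g, g\rangle^{2}},
\]
where $I(\phi)$ is the Garrett trilinear period on an arithmetic test vector $\phi = \phi_{f}\otimes \phi_{g}\otimes \phi_{g}$, and the local factors at bad primes are explicitly computable, nonzero, and rational under the square-free hypothesis on $N_{1}, N_{2}$. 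The archimedean factor has the form $(2\pi\sqrt{-1})^{\alpha}(\sqrt{-1})^{\beta}$ for explicit $(\alpha,\beta)$ depending on the weight triple $(\kappa',\kappa,\kappa)$. Using Shimura's identifications of adjoint $L$-values with Petersson norms (up to powers of $2\pi\sqrt{-1}$), the Hida--Shimura relation $\langle f,f\rangle\sim_{\Qbar^{\x}}\Omega_{f}^{+}\Omega_{f}^{-}$, and the Shimura rationality of $L((w+1)/2, f)$, one rewrites the left-hand period as exactly Deligne's period $\Omega_{f,g}$ in each case: the balanced case $2\kappa>\kappa'$ (where the dominant factor is $\langle g,g\rangle^{2}\Omega_{f}^{\epsilon}$) and the unbalanced case $2\kappa\le\kappa'$ (where it is $\langle f,f\rangle\Omega_{f}^{\epsilon}$).

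The main obstacle is a careful \emph{archimedean period bookkeeping}. To pin down the exponents $(\alpha,\beta)$ of $2\pi\sqrt{-1}$ and $\sqrt{-1}$ matching the two cases of Deligne's period, one must compute the archimedean local Ichino integral exactly for the holomorphic weight-$\kappa'$ and weight-$\kappa$ vectors; and to identify $I(\phi)$ as an algebraic multiple of $\Omega_{f}^{\epsilon}$ Galois-equivariantly, one needs the rationality of the Garrett trilinear integral on arithmetic test vectors. Granted these identifications, the $\sigma$-equivariance of the final statement follows from the Galois-compatibility of each ingredient---Shimura's rationality for $L(n,f)$ and for $L(1,\Ad f)/\langle f, f\rangle$, the Hida--Shimura relation, and the action of $\sigma$ on the arithmetic test vectors.
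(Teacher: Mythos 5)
There is a genuine gap in your reduction step. Writing $g\otimes g={\rm Sym}^2(g)\oplus\wedge^2(g)$ gives the factorization
\[
L(s,f\otimes g\otimes g)=L(s,{\rm Sym}^2(g)\otimes f)\cdot L(s-\kappa+1,f),
\]
and at the central point $s=(w+1)/2=\kappa+\kappa'/2-1$ the second factor is $L(\kappa'/2,f)$, the \emph{central} value of $f$ itself. This value is not under control: it vanishes identically whenever the root number of $f$ is $-1$, and even when the sign is $+1$ it may vanish. When it vanishes, the triple product central value is $0$ and no amount of rationality for the Garrett/Ichino period recovers any information about $L((w+1)/2,{\rm Sym}^2(g)\otimes f)$; the "divide by Shimura's value" step collapses. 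This is exactly the obstruction the paper is designed to avoid: instead of the split triple $(f,g,g)$ over $\Q\times\Q\times\Q$, it base-changes $g$ to a real quadratic field $K$ and works with the twisted triple product $g_K\otimes f$ over the \'etale cubic algebra $E=K\times\Q$, where the factorization reads $L(s,g_K\otimes f)=L(s,{\rm Sym}^2(g)\otimes f)\,L(s-\kappa+1,f\otimes\tau_K)$. The field $K$ is then \emph{chosen}, via the Friedberg--Hoffstein nonvanishing theorem, so that $L(\kappa'/2,f\otimes\tau_K)\neq 0$, which makes the division legitimate; the same choice of $K$ (through congruence conditions on its discriminant) also pins down the local root numbers so that the quaternion algebra in Ichino's formula is the matrix algebra in the unbalanced case $2\kappa\leq\kappa'$ and a definite quaternion algebra in the balanced case $2\kappa>\kappa'$ (which is where the hypothesis $N_1>1$ enters). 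Your proposal has no mechanism to force nonvanishing of the auxiliary factor, nor to control which quaternion algebra appears.

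Two further points you would still need even after fixing the above. First, when the relevant global sign is $-1$, Ichino's formula does not apply and the statement is instead proved by showing both sides vanish; this requires the Galois-invariance of the local and global root numbers (the content of Lemma A and Corollary A in the appendix), which your argument never addresses. Second, the identification of the period quotient with Deligne's period $\Omega_{f,g}$ is not purely archimedean bookkeeping in the balanced case: there one must also compare $\langle g_K,g_K\rangle$ (resp.\ $\langle f_E,f_E\rangle$) with $\langle g,g\rangle^2$ Galois-equivariantly, which uses the adjoint $L$-function factorization under base change together with Sturm's results — an ingredient absent from your sketch.
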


\begin{Remark}
If $N_1=1$ and $2\kappa>\kappa'$, then the above algebraicity result was obtained by Ichino
\cite[Corollary 2.6]{Ichino2005} via the explicit pullback formula for Saito-Kurokawa lifts 
($N_2=1$ and $\kappa=\kappa'/2+1$) and by Xue ($N_2=1$) using a different but closely related
approach \cite{Xuepreprint}. The first author has generalized Ichino's pullback formula of 
Saito-Kurokawa lifts in \cite{Chen2018} if $N_2$ is furthter assumed to be odd and cubic-free. 

Our result covers the remaining cases and thus settles down Deligne's conjecture for the central
value of the $L$-functions for $\Sym^2(g)\ot f$ at least when the levels of $f$ and $g$ are 
square-free.
\end{Remark}

We remark that Raghuram has proved the algebraicity of the central critical values of 
the Rankin-Selberg $L$-functions attached to regular algebraic cuspidal automorphic representations
on $\GL(n)\times \GL(n-1)$ in a quite general setting \cite{Raghuram2009}. His method is based on 
a cohomological interpretation of the Rankin-Selberg zeta integral, and specializing the result of 
Raghuram to $n=3$, one also obtains the algebraicity of the central critical value of 
$L(s,{\rm Sym}^2(g)\otimes f)$ divided by certain cohomological period for $\GL(3)\times\GL(2)$ in 
the case $2\kappa>\kappa'$. However, our result in this case is not covered by \cite{Raghuram2009} 
in the sense that the periods in both results are quite different. More precisely, the periods in 
our main theorem coincide with Deligne's period described by Blasius in the appendix to \cite{Orloff1987} while Raghuram uses the period $p^\pm(\Pi)$ obtained from the comparison between deRham and Betti cohomology groups for $\GL(3)$ \cite[\S 3.2.1]{Raghuram2009}. It seems 
a difficult problem to study directly the relation between Deligne's period and Raghuram 's cohomological period. Our result combined with the non-vanishing hypothesis of central $L$-values would give a comparison between these two periods.

Our approach also offers the algebraicity of the central critical value of symmetric cube 
$L$-functions with the assumption on the non-vanishing of $L$-values with cubic twist. 

\begin{thmb}[\corref{C:algebraicity for symmetric cubic}]
Suppose that $N_1>1$ and there exist a cubic Dirichlet character $\chi$ such that 
$L\left(\frac{\kappa'}{2} ,f\ot\chi\right)\neq 0$. 
For $\sigma \in {\rm Aut}(\C)$, we have
\begin{align*}
\left(
\frac{L((w+1)/2,{\rm Sym}^3(f))}{\pi^{2\kappa'-1}(\sqrt{-1})^{\kappa'}
\langle f,f\rangle (\Omega_f^{\epsilon})^2} 
\right )^{\sigma}
=
\frac{L((w+1)/2,{\rm Sym}^3(f^{\sigma}))}{\pi^{2\kappa'-1}(\sqrt{-1})^{\kappa'}
\langle f^{\sigma},f^{\sigma}\rangle (\Omega_{f^{\sigma}}^{\epsilon})^2}.
\end{align*}
\end{thmb}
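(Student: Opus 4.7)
The plan is to deduce Theorem B from Theorem A applied to the auxiliary form $g = f\otimes\chi$, where $\chi$ is the cubic character from the hypothesis, combined with a Clebsch--Gordan decomposition and Deligne's twist formula for periods. Since $\chi^3 = 1$, $g$ is again a newform of weight $\kappa'$, so $\kappa = \kappa'$ and $w = 3\kappa'-3$, matching the $w$ of Theorem B. We may assume that $\cond(\chi)$ is coprime to $N_1$, so that the level of $g$ is square-free and Theorem A applies directly.

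The key identity is the Clebsch--Gordan decomposition
\[
\Sym^2 M(f)\otimes M(f) \;\cong\; \Sym^3 M(f) \,\oplus\, M(f)(1-\kappa'),
\]
which, together with $\Sym^2(g) = \Sym^2(f)\otimes\chi^2$ and $\chi^2 = \bar\chi$ for cubic $\chi$, yields
\[
L\bigl((w+1)/2,\Sym^2 g \otimes f\bigr) \;=\; L\bigl((w+1)/2,\Sym^3 f \otimes \bar\chi\bigr)\cdot L(\kappa'/2, f\otimes\bar\chi).
\]
The hypothesis gives $L(\kappa'/2, f\otimes\bar\chi) \neq 0$ (using that $f$ has totally real Hecke eigenvalues, or the functional equation), and Shimura's theorem expresses this central value as a non-zero algebraic multiple of $g(\bar\chi)(2\pi\sqrt{-1})^{\kappa'/2}\Omega_f^{\epsilon}$ in an $\Aut(\C)$-equivariant manner. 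Dividing the Theorem~A identity by this factor yields the algebraicity and $\Aut(\C)$-equivariance of
\[
\frac{L((w+1)/2,\Sym^3 f\otimes\bar\chi)}{g(\bar\chi)^{2}\cdot\pi^{2\kappa'-1}(\sqrt{-1})^{\kappa'}\langle f,f\rangle(\Omega_f^{\epsilon})^{2}}.
\]

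To remove the twist by $\bar\chi$, I would appeal to Deligne's formula for the period of a twisted motive (Blasius, appendix to \cite{Orloff1987}). For the even character $\bar\chi$ and the $4$-dimensional motive $\Sym^3 M(f)$, whose Betti realisation has $d^+ = d^- = 2$,
\[
c^\pm\bigl(\Sym^3 M(f)\otimes\bar\chi\bigr) \;=\; g(\bar\chi)^{2}\,c^\pm\bigl(\Sym^3 M(f)\bigr)
\]
up to an algebraic factor in $\Q(\chi)$. Thus the displayed denominator is precisely Deligne's period $c^\epsilon(\Sym^3 M(f)\otimes\bar\chi)$, and the equivariance obtained above is Deligne's conjectural equivariance for the twisted motive. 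Using the standard transformation $g(\bar\chi)^\sigma = \bar\chi^{-1}(\sigma)g(\bar\chi^\sigma)$ of Gauss sums under $\sigma\in\Aut(\C)$, this descends to the claimed equivariance of $L((w+1)/2,\Sym^3 f)/(\pi^{2\kappa'-1}(\sqrt{-1})^{\kappa'}\langle f,f\rangle(\Omega_f^\epsilon)^{2})$.

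The hardest part is this final descent: one must verify that the Gauss-sum factors from Shimura's formula and from Deligne's twist formula combine cleanly, that the implicit algebraic factor in Deligne's formula is genuinely $\Aut(\C)$-compatible, and that the explicit period $\pi^{2\kappa'-1}(\sqrt{-1})^{\kappa'}\langle f,f\rangle(\Omega_f^\epsilon)^{2}$ really does match Blasius's $c^\epsilon(\Sym^3 M(f))$ for the untwisted motive. The coprimality $(N_1,\cond(\chi))=1$ and the square-freeness assumption on $N_1$ are precisely what keep the Euler factors at bad primes under control so that these reconciliations are transparent.
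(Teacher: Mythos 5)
Your reduction to Theorem~A breaks down at the first step. Theorem~A (and the entire local machinery behind it) is set up for newforms on $\Gamma_0(N_1)$ and $\Gamma_0(N_2)$ with \emph{square-free} levels and \emph{trivial} nebentypus: locally the representations are assumed unramified or unramified special with trivial central character. The auxiliary form $g=f\otimes\chi$ has nebentypus $\chi^2\neq 1$ and level $N_1\cdot\mathrm{cond}(\chi)^2$, which is never square-free; choosing $\mathrm{cond}(\chi)$ coprime to $N_1$ does not repair either defect, so Theorem~A simply does not apply to the pair $(f,g)$. More seriously, even if one granted a rationality statement for $L((w+1)/2,\Sym^2(f\otimes\chi)\otimes f)=L((w+1)/2,\Sym^3(f)\otimes\bar\chi)\,L(\kappa'/2,f\otimes\bar\chi)$, your final ``descent'' is not a valid inference: $L(s,\Sym^3(f)\otimes\bar\chi)$ and $L(s,\Sym^3(f))$ are different Euler products, and no bookkeeping with Gauss sums or with the twisting formula for Deligne periods converts Galois-equivariance of the twisted central value (divided by its period) into Galois-equivariance of the untwisted one. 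The period formula $c^{\pm}(\Sym^3 M(f)\otimes\bar\chi)\sim g(\bar\chi)^2\,c^{\pm}(\Sym^3 M(f))$ compares \emph{periods} of the two motives; it gives no relation whatsoever between their two central $L$-values, which is what the statement of Theorem~B requires.

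The paper's proof takes a route that produces the untwisted symmetric cube directly: it base-changes $f$ to the totally real cubic field $E$ cut out by $\chi$ and applies the explicit Ichino formula in the balanced case (Corollary~\ref{C:algebraicity for triple for balanced case}) to the degree-eight triple product $L(s,f_E,r)$, which factors as $L(s,\Sym^3(f))\,L(s-\kappa'+1,f\otimes\chi)\,L(s-\kappa'+1,f\otimes\bar\chi)$, so that $\Sym^3(f)$ appears as an honest factor and the cubic twists occur only as abelian factors handled by Shimura's theorem (this is where the non-vanishing hypothesis, the Gauss sums $G(\chi), G(\bar\chi)$, and the observation $L(\kappa'/2,f\otimes\bar\chi)=\ol{L(\kappa'/2,f\otimes\chi)}\neq 0$ enter). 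The Petersson-norm comparison $\langle f_E,f_E\rangle$ versus $\langle f,f\rangle^3$ comes from $L(1,\pi_E,{\rm Ad})=L(1,\pi,{\rm Ad})L(1,\pi,{\rm Ad},\chi_E)L(1,\pi,{\rm Ad},\bar\chi_E)$ together with Sturm's theorem, and $\cD_E$ is a square because $E/\Q$ is Galois; the hypothesis $N_1>1$ guarantees the definite quaternion algebra needed in the balanced case. To salvage your strategy you would have to extend the local computations (hence Theorem~A) to non-trivial nebentypus and non-square-free levels, and even then you would only reach the twisted value $L((w+1)/2,\Sym^3(f)\otimes\bar\chi)$, not the value asserted in Theorem~B.
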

The hypothesis on the non-vanishing of cubic twists of $L$-values is expected to hold in general 
but seems unfortunately a far-reaching problem at this moment. So far this hypothesis is only 
known to be satisfied for cuspidal automorphic representations on $\GL_2(\A_{K})$ when $\Q(\sqrt{-3})\subset K$ 
in \cite{BFH2005}.\\


Our proof of Theorem A is based on an explicit Ichino's central value formula for the twisted 
triple product $L$-functions. Let $K$ be a real quadratic field and let $g_{\cF}$ be the Hilbert 
modular newform over $\cF$ associated to $g$ obtained by the base change lift. 
Let $L(s,g_{\cF}\otimes f)$ be the triple product $L$-function associated to $g_{\cF}\otimes f$. 
Let $\tau_K$ be the quadratic Dirichlet character associated with $K/\Q$.  
From the following factorization of $L$-functions \begin{align*}
L(s,g_{\cF}\otimes f)
=
L(s,{\rm Sym}^2(g)\otimes f)L\left( s-\kappa+1,f\otimes \tau_K \right),
\end{align*}
one can deduce easily the algebraicity of $L\left(\frac{w+1}{2},{\rm Sym}^2(g)\otimes f \right )$ 
(divided by the associated Deligne's period) from that of the central value 
$L\left (\frac{w+1}{2},g_{\cF}\otimes f \right)$ of the twisted triple product and that of 
the central value $L\left ( \frac{\kappa'}{2},f\otimes \tau_\cF \right)$ of elliptic modular forms 
whenever $L(\frac{\kappa'}{2},f\ot\tau_\cF)$ does not vanish. The algebraicity of critical $L$-values 
of elliptic modular forms with Dirichlet twists is a classical result due to Shimura, so the main 
task is to choose a nice real quadratic field $K$ with $L(\frac{\kappa'}{2},f\ot\tau_K)\not =0$ and 
show the algebraicity of the central value $L(\frac{w+1}{2},g_\cF\ot f)$, for which one appeals to 
Ichinos's formula in \cite{Ichino2008}. 
More precisely,  if the global sign in the functional equation of the automorphic $L$-function for 
the twisted triple product $g_\cF\ot f$ is $+1$, then Ichino's formua alluded to above asserts that 
there exists a quaternion algebra $D$ over $\Q$ so that the central critical value 
$L\left (\frac{w+1}{2},g_{\cF}\otimes f \right)$ is the ratio between the square of the 
global trilinear period integral of an automorphic form on $D^\x(\A_\cF)\times D^\x(\A)$ and a 
product of certain local zeta integrals. Taking into account the functional equation and the Galois 
invariance of the global sign, we may assume the global sign of $\Sym^2(g)\ot f$ is $+1$. Then by 
using a result of Friedberg and Hofffstein \cite{FriedbergHoffstein1995}, we can choose a real 
quadratic field $K$ such that (i) $L(\frac{\kappa'}{2},f\ot\tau_K)\not =0$, (ii) the sign of 
$g_\cF\ot f$ is $+1$ and (iii) the quaternion algebra $D$ in Ichino's formula is the matrix algebra 
(resp. a definite quaternion algebra) over $\Q$ in the case $2\kappa \leq \kappa'$ 
(resp. $2\kappa>\kappa'$ if we assume further that $N_1>1$). To obtain the explicit Ichino's central value 
formula, we calculate the local period integral at each place (Theorems \ref{T:central value formula 
unbalanced case} and  \ref{T:central value formula for balanced case}) in terms of global period 
integral, and as a consequence, we obtain the algebraicity of the central value 
$L\left (\frac{w+1}{2},g_{\cF}\otimes f \right)$ 
(Corollaries  \ref{C:algebraicity for triple, unbalanced case} and 
\ref{C:algebraicity for triple for balanced case}) by a standard argument. 

The idea of the proof for Theorem B is similar. 
Assume $\chi$ is a cubic Dirichlet character such that 
$L\left (\frac{\kappa'}{2} ,f\otimes \chi \right) \neq 0$. 
Let $E$ be the totally real cubic Galois extension over $\Q$ cut out by $\chi$ and 
let $f_E$ be the Hilbert modular newform over $E$ associated to $f$ via the base change lift. 
Consider the degree eight triple product $L$-function $L(s,f_E)$ associated to $f_E$. 
Then we have the factorization of 
$L$-functions : 
\[
L(s,f_E)
=
L\left(s,{\rm Sym}^3(f)\right)L(s-\kappa'+1,f\otimes \chi)L(s-\kappa'+1,f\otimes \chi^2).
\]
Thus the algebraicity of $L\left (\frac{w+1}{2},{\rm Sym}^3(f)\right)$ is a consequence 
of the algebraicity of  $L\left (\frac{w+1}{2},f_E\right)$, which again can be deduced from 
the explicit Ichino central value formula in this case.

This paper is organized as follows. We first study the local zeta integrals in Ichino's formula. In in \S2, we introduce the local zeta integrals and fix the test vectors used in the subsequent local calculation. After recalling basic properties of local matrix coefficients for $\GL(2)$ in \S 3, we carry out the calculations of local zeta integrals in the cases of the matrix algebra and the 
division algebra in \S4 and \S5 respectively. In particular, we compute the archimedean zeta integrals explicitly. 
In \S6, we recall Ichino's formula and in \S7, we prove its explicit version in 
\thmref{T:central value formula unbalanced case} and 
\thmref{T:central value formula for balanced case} 
as well as the main result  in \S8. In the appendix, we 
follow a computation of Bhagwat \cite{Bhagwat2014} to determine Deligne's period for 
$\Sym^2(g)\ot f$. This is well-known to experts, but we include it here for the sake of completeness.

\section{Local zeta integrals}
The purpose of this section is to fix the test vectors and to define the local zeta integrals 
in our local calculation. These local zeta integrals are used to establish explicit Ichino's
formulae. 
 
\subsection{Notation and assumptions}
Let $F$ be a local field of characteristic zero. When $F$ is non-archimedean, denote $\mathcal{O}_F$ the ring of integers of F, $\varpi_F$ a prime element, and ${\rm ord}_F$ be the valuation on $F$ normalized so that ${\rm ord}_F(\varpi_F)=1$.  Let $|\cdot|_F$ be the absolute value on $F$ normalized so that $|\varpi_F|_F^{-1}$ is equal to the cardinality of $\mathcal{O}_F / \varpi_F\mathcal{O}_F$. When $F$ is archimedean, let $|\cdot|_{\R}$ be the usual absolute value on $\R$ and $|z|_\C=z\overline{z}$ on $\C$.

Let $E$ be an \etale cubic algebra over a local field $F$ of characteristic zero. Then $E$ is (i) $F\x F\x F$ three copies of $F$, or 
(ii) $\cF\x F$, where $\cF$ is a quadratic extension of $F$, or (iii) is a cubic field extension 
of $F$. 
Let $D$ be a quaternion algebra over $F$. If $L$ is a $F$-algebra, let $D^\x(L):=(D\ot_F L)^\x$. Let $\itPi$ be a unitary irreducible admissible 
representation of $D^\x(E)$ whose central character is trivial on $F^{\x}$. 
Let $\itPi'$ be the unitary irreducible admissible representation of $\G(E)$ associated to $\itPi$
via the Jacquet-Langlands correspondence. Therefore $\itPi'=\itPi$ if $D={\rm M}_2(F)$ is the matrix
algebra. Notice that $\itPi'=\pi_1\bt\pi_2\bt\pi_3$ (if $E=F\x F\x F$), or $\itPi'=\pi'\bt\pi$ 
(if $E=K\x F$), where $\pi_j$ ($j=1,2,3$) and $\pi$ are unitary irreducible admissible
representations of $\G(F)$, and $\pi'$ is a unitary irreducible admissible representation of $\G(K)$.
We make the following assumptions on the triplet $(F,E,\itPi)$ in this section and \S4,
\S5. 

\begin{itemize}
\item
If $F$ is archimedean, then $F=\R$ and $E=\R\x\R\x\R$.
\item
When $F=\R$, $\itPi'$ is a (limit of) discrete series with the minimal weight 
$\ul{k}=(k_1,k_2,k_3)$ and the central character 
${\rm Sgn}^{k_1}\bt{\rm Sgn}^{k_2}\bt{\rm Sgn}^{k_3}$ for some positive integers $k_1, k_2, k_3$. 
\item
When $F$ is non-archimedean, we assume $\pi_1, \pi_2, \pi_3$  
and $\pi', \pi$ and $\itPi'$ (when $E$ is a field) are unramified (special) representations whose
central characters are trivial.  
\item
We assume $\Lambda(\itPi')<1/2$, where $\Lambda(\itPi')$ is defined in 
\cite[pp. 284-285]{Ichino2008}.  
\item
We assume ${\rm Hom}_{D^{\x}(F)}(\itPi,\C)\neq\stt{0}$.
\end{itemize}

\begin{Remark}
By the results of Prasad \cite{Prasad1990} and \cite{Prasad1992}, we have
\[
{\rm dim}_\C{\rm Hom}_{D^{\x}(F)}(\itPi,\C)\leq 1.
\]
When $F=\R$, it follows from \cite[Theorem 9.5]{Prasad1990} that 
${\rm Hom}_{D^{\x}(\R)}(\itPi,\C)\neq\stt{0}$ precisely when (i) $D={\rm M}_2(F)$ and 
$2\cdot {\rm max}\stt{k_1,k_2,k_3}\geq k_1+k_2+k_3$; (ii)  $D$ is the division algebra and
$2\cdot {\rm max}\stt{k_1,k_2,k_3}<k_1+k_2+k_3$. The first case is called the $unbalanced$ case, 
while the second case is called the $balanced$ case.
\end{Remark}

\subsection{The new line}\label{SS:specialine} 
Denote by $V_\itPi$ the representation space of $\itPi$. In what follows, we shall introduce a special one-dimensional
subspace in $V_{\itPi}$, which is called \emph{the new line} $V_{\itPi}^{\rm new}$ of $V_{\itPi}$ . If $F$ is non-archimedean and $\fraka$ is an ideal of $\cO_E$, let 
\[
\cU_0(\fraka)=\stt{g=\pMX{a}{b}{c}{d}\in\GL_2(\cO_E)\mid c\in \fraka }.
\]
Suppose that $D={\rm M}_2(F)$. If $F$ is non-archimedean, then by \cite{Casselman1973}, there is a 
unique ideal $c(\itPi)$ of $\cO_E$ such that
\[
\dim_\C V_\itPi^{\cU_0(c(\itPi))}=1.
\]
The ideal $c(\itPi)$ is called the \emph{conductor} of $\itPi$, and define the new line
$V_{\itPi}^{\rm new}:=V_\itPi^{\cU_0(c(\itPi))}$.
If $F=\R$ and $E=\R\x \R\x \R$, the new line $V_\itPi^{\rm new}$ is the one dimensional subspace of the minimal weight under the $\SO_2(E)$-action.

Suppose that $D$ is division. If $F$ is non-archimedean, and $E\neq K\x F$, then $V_\itPi$ is already 
one-dimensional according to our assumption. In this case, we put $V^{\rm new}_\itPi=V_\itPi$. 
When $E=K\x F$, we have $\itPi=\pi'\bt\pi$, where $\pi$ (resp. $\pi'$) is a unitary irreducible
admissible (resp. generic) representation of $D^{\x}(F)$ (resp. $\G(K)$). In this case, we have 
$V_\itPi=V_{\pi'}\ot V_\pi$ and define the new line $V^{\rm new}_{\itPi}:=V^{\rm new}_{\pi'}\ot V_\pi$.  Finally, if $F=\R$ and $2\,{\rm max}\stt{k_1,k_2,k_3}<k_1+k_2+k_3$, 
we define the new line $V^{\rm new}_\itPi$ to be the one-dimensional subspace
$V^{D^{\x}(\R)}_\itPi$ of $V_\itPi$ \cite[Theorem 9.3]{Prasad1990}.

\subsection{}\label{SS:raising element}
Let $\frak{g}=\Lie(\GL_2(\R))\ot_\R\C$ and $\frak{U}$ be the universal enveloping algebra of 
$\frak{g}$. We put $\frak{U}_E=\frak{U}\ot\frak{U}\ot\frak{U}$. Let
\begin{align*}
V_+
=&
\begin{pmatrix} 1&0\\0&-1 \end{pmatrix}\otimes 1+
\begin{pmatrix} 0&1\\1&0 \end{pmatrix}\otimes \sqrt{-1}\in\frak{U}
\end{align*}
be the weight raising operator in \cite[Lemma 5.6]{JLbook}. Define the normalized operator by
\[\t{V}_+:=(-\frac{1}{8\pi})\cdot V_+.\]Let $\tau_F\in\G(F)$ be 
given by
\begin{equation}\label{E:tau_F}
\tau_F
=
\begin{cases}
\begin{pmatrix}
-1&0\\0&1
\end{pmatrix}\quad&\text{if $F=\R$},\\
1\quad&\text{if $F$ is nonarchimedean}.
\end{cases}
\end{equation}

Define a special element $\bft\in\frak{U}_E\x{\rm SO}(2,E)$ or $D^{\x}(E)$ attached to $\itPi$ as
follows:
\begin{itemize}
\item 
$F=\R$, $D={\rm M}_2(F)$ and $2\,{\rm max}\stt{k_1,k_2,k_3}\geq k_1+k_2+k_3$. Suppose that $k_3={\rm max}\stt{k_1,k_2,k_3}$. Then 
\[
\bft
=
\left( 1\ot\wtd V_+^\frac{k_3-k_1-k_2}{2}\ot 1, (1,1,\tau_{\R})
\right)
\in 
\frak{U}_E\x \SO(2,E).
\]
\item 
$F$ non-archimedean, $E=F\x F\x F$, $D={\rm M}_2(F)$, $\itPi=\pi_1\bt\pi_2\bt\pi_3$ and precisely
one of $\pi_j$ is unramified special, say $\pi_1$:
\[
\bft
=
\left(
1,
\begin{pmatrix}
\varpi_F^{-1}&0\\0&1
\end{pmatrix}
,1
\right) 
\in\G(F)\x\G(F)\x\G(F).
\]  
\item 
$F$ non-archimedean, $E=\cF\x F$, $\cF/F$ is ramified, $D={\rm M}_2(F)$,
$\itPi=\pi'\bt\pi$ with $\pi'$ spherical and $\pi$ unramified special:
\[
\bft
=
\left( 
\begin{pmatrix}
\varpi^{-1}_{\cF}&0\\0&1
\end{pmatrix},
1
\right)
\in 
\G(\cF)\x\G(F).
\]

\item 
$F$ non-archimedean, $E=\cF\x F$, $\cF/F$ is ramified, $D={\rm M}_2(F)$, $\itPi=\pi'\bt\pi$ with
$\pi'$ unramified special and $\pi$ spherical:
\[
\bft
=
\left(
\begin{pmatrix}
\varpi_{F}&0\\0&1
\end{pmatrix}
,1
\right)
\in 
\G(\cF)\x\G(F).
\] 

\item 
$F$ non-archimedean, $E$ ramified cubic extension, $D={\rm M}_2(F)$, $\itPi$ unramified special:
\[
\bft
=
\begin{pmatrix}
\varpi^{-1}_{E}&0\\0&1
\end{pmatrix}
\in\G(E).
\]
\item 
For all other cases:
\[
\bft
=
1\in D^{\x}(E).
\] 
\end{itemize}

\subsection{Definition of local zeta integrals}\label{SS:local period integral}
We are going to define the local zeta integrals in our local computation except for the balanced 
case, which will be defined by equation \eqref{E:local period integral for the balabced case}.
 
Let $\cJ\in D^{\x}(E)$ be given as follows:
\[
\cJ
=
\begin{cases}
(\tau_{\R},\tau_{\R},\tau_{\R})\in\G(\R)\x\G(\R)\x\G(\R)
\quad&\text{if $F=\R$ and $D={\rm M}_2(\R)$},\\
1\quad&\text{otherwise}.
\end{cases}
\]
Let $\zeta_F(s)$ denote the local zeta function. Therefore,
\[
\zeta_F(s)
=
\begin{cases}
2(2\pi)^{-s}\Gamma(s)\quad&\text{if $K=\C$},\\
\pi^{-s/2}\Gamma(s/2)\quad&\text{if $K=\R$},\\
(1-q_F^{-s})^{-1}\quad&\text{if $K$ is nonarchimedean},
\end{cases}
\]
where $q_F$ is the cardinality of the residue field of $F$ when $F$ is non-archimedean.

\begin{defn}\label{D: definition of zeta integral}
Fix a nonzero $D^{\x}(E)$-invariant pairing $\cB_\itPi\colon V_{\itPi}\times V_{\itPi}\to\C$. 
Let $\phi_{\itPi}\in V^{\rm new}_{\itPi}$ be a non-zero vector in the new line. The normalized local zeta integral is defined by
\begin{align*}
I(\itPi,\bft)
&=
\int_{F^{\x}\backslash D^{\x}(F)}
\frac{\cB_{\itPi}(\itPi(h\bft)\phi_{\itPi},\itPi(\bft)\phi_{\itPi})}
{\cB_{\itPi}(\itPi(\cJ)\phi_{\itPi},\phi_{\itPi})}\,dh,\\
I^*(\itPi,\bft)
&=
\frac{\zeta_F(2)}{\zeta_E(2)}\cdot
\frac{L(1,\itPi',{\rm Ad})}{L(1/2,\itPi', r)}\cdot
I(\itPi,\bft).
\end{align*}
Here the $L$-factors are defined in \cite[pp. 282-283]{Ichino2008}.
\end{defn}

\begin{Remark}\noindent
\begin{itemize}
\item[(1)] 
Since the central character of $\itPi$ is trivial on $F^{\x}$, the integrals are well-defined. 
Moreover, our assumption $\Lambda(\itPi')<1/2$ implies these integrals converge absolutely 
\cite[lemma 2.1]{Ichino2008}.
\item[(2)] 
We note that $\phi_{\itPi}$ is unique up to a constant as well as $\cB_{\itPi}$. Thus 
$I(\itPi,\bft)$ is independence of the choice of $\phi_{\itPi}$ and $\cB_{\itPi}$.  
But it does depend on the choice of the measure $dh$.
\end{itemize}
\end{Remark}

\section{Matrix coefficients for $\GL(2)$}
Let $F$ be either $\R$ or a non-archimdean local field. Let $\pi$ be a unitary irreducible admissible 
generic representation of $\G(F)$. Let $\phi_\pi$ be a non-zero vector in the new line $V_\pi^{\rm new}$. Fix a non-zero $\G(F)$-invariant bilinear 
pairing $\cB_\pi\colon \pi\x\t{\pi}\to\C$, where $\t{\pi}$ is the admissible dual of $\pi$.
\begin{defn}\label{D:matrix coefficient}
We define the matrix coefficient associate with an element 
$t\in\frak{U}\x{\rm O}(2)$ or $t\in\G(F)$ by
\[
\Phi_{\pi}(h;t)
=
\frac{\cB_{\pi}(\pi(ht)\phi_{\pi},\t{\pi}(t)\phi_{\t{\pi}})}
{\cB_{\pi}(\pi(\tau_F)\phi_{\pi}, \phi_{\t{\pi}})},\quad h\in\G(F).           
\]
Recall that $\tau_F$ is given by \eqref{E:tau_F}. When $t=1$, we simply denote 
$\Phi_{\pi}(h)$ for $\Phi_{\pi}(h;t)$.
\end{defn}

\begin{Remark}
Note that $\Phi_{\pi}(h;t)$ is independent of the choice of elements $\phi_{\pi}$ and 
$\phi_{\t{\pi}}$ in the one dimensional subspaces of $\pi$ and $\t{\pi}$ which consisting of
either weight $k$ elements or newforms, respectively. 
Moreover, it is also independent of $\cB_{\pi}$ and the models for which we used to realize $\pi$ 
and $\t{\pi}$.  
\end{Remark}

\subsection{A formulas of $\Phi_\pi(h,t)$: the archimedean case}
\label{SS:matrix coefficient: archimedean case}
Let $\pi$ be a (limit of) discrete series representation of $\G(\R)$ with minimal weight $k\geq 1$
and the central character ${\rm sgn}^k$. Note that $\pi\cong\t{\pi}$. 
Let $\psi$ be the additive character of $\R$ defined by 
$\psi(x)=e^{2\pi\sqrt{-1}x}$. Let $\cW(\pi,\psi)$ be the 
Whittaker model of $\pi$ with respect to $\psi$. 
Let $\cB_{\pi}:\cW(\pi,\psi)\times\cW(\pi,\psi)\rightarrow\C$ be the $\G(\R)$-invariant 
bilinear pairing given by 
\begin{equation}\label{E:bilinear pairing}
\cB_{\pi}(W, W')
=
\int_{\R^{\x}}W\left(\pMX t001\right)W'\left(\begin{pmatrix}-t&0\\0&1\end{pmatrix}\right)d^{\x}t,
\end{equation}
for $W, W'\in\cW(\pi,\psi)$. Here $d^{\x}t=|t|_\R^{-1}dt$, and $dt$ is 
the usual Lebesgue measure on $\R$.

Let $W_{\pi}\in\cW(\pi,\psi)$ be the weight $k$ element characterized by
\begin{equation}\label{E:Whittaker function for weight k}
W_{\pi}\left(\pMX a001\right)=a^{\frac{k}{2}} e^{-2\pi a}\cdot\bbI_{\R_+}(a),\quad a\in\R^{\x}.
\end{equation} 
For each $m\in\Z_{\geq 0}$, we put
\[
 W^m_{\pi}=\rho\left(V_+^m\right)W_{\pi}.
\]
Here $\rho$ denotes the right translation. In particular, we have $W^0_{\pi}=W_{\pi}$. We note that $W^m_{\pi}$ has weight $k+2m$.
The following recursive formula can be deduced from the proof of \cite[Lemma 5.6]{JLbook}
\begin{equation}\label{E:recursive formula}
W_{\pi}^{m+1}\left(\pMX a001\right)
=
2a\cdot\frac{d}{da}W_{\pi}^m\left(\pMX a001\right)+(k+2m-4\pi a)\cdot 
W_{\pi}^m\left(\pMX a001\right).
\end{equation}
\begin{lm}\label{L:whittaker function for weight k+2m}
We have
\[
W_{\pi}^m\left(\pMX a001\right)
=
2^m P^m_{\pi}(a) e^{-2\pi a}\cdot \bbI_{\R_+}(a),
\]
where $P^m_\pi$ is the polynomial given by
\[
P^m_{\pi}(a)
=
\sum_{j=0}^m (-4\pi)^j\begin{pmatrix} m\\j \end{pmatrix}
\frac{\Gamma(k+m)}{\Gamma(k+j)}\cdot a^{\frac{k}{2}+j}.
\]
\end{lm}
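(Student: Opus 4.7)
I would prove this by induction on $m$, using the recursive formula \eqref{E:recursive formula} in combination with the normalization \eqref{E:Whittaker function for weight k}. The base case $m=0$ is immediate: the claimed polynomial reduces to $P^0_\pi(a)=\tfrac{\Gamma(k)}{\Gamma(k)}\, a^{k/2}=a^{k/2}$, so the stated formula matches \eqref{E:Whittaker function for weight k} exactly.

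For the inductive step, assume the formula holds for $m$ and plug $W^m_\pi(\mathrm{diag}(a,1))=2^m P^m_\pi(a)e^{-2\pi a}\bbI_{\R_+}(a)$ into the right-hand side of \eqref{E:recursive formula}. Expanding by the product rule and factoring out $2^{m+1}e^{-2\pi a}$, matching with $W^{m+1}_\pi(\mathrm{diag}(a,1))=2^{m+1}P^{m+1}_\pi(a)e^{-2\pi a}$ reduces the claim to the polynomial identity
\[
P^{m+1}_\pi(a)\;=\;a\,(P^m_\pi)'(a)\;-\;4\pi a\,P^m_\pi(a)\;+\;\tfrac{k+2m}{2}\,P^m_\pi(a).
\]

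Writing the coefficients as $c^m_j:=(-4\pi)^j\binom{m}{j}\frac{\Gamma(k+m)}{\Gamma(k+j)}$, so that $P^m_\pi(a)=\sum_{j=0}^m c^m_j\,a^{k/2+j}$, I would compare the coefficient of $a^{k/2+j}$ on both sides. The terms $a(P^m_\pi)'(a)$ and $\tfrac{k+2m}{2}P^m_\pi(a)$ together contribute $(k+m+j)c^m_j$, while $-4\pi a\,P^m_\pi(a)$ contributes $-4\pi c^m_{j-1}$ (with the convention $c^m_{-1}=c^m_{m+1}=0$). Hence the identity to check becomes
\[
c^{m+1}_j \;=\; (k+m+j)\,c^m_j \;-\;4\pi\, c^m_{j-1}.
\]
Using $-4\pi\cdot(-4\pi)^{j-1}=(-4\pi)^j$ and $\Gamma(k+j)=(k+j-1)\Gamma(k+j-1)$, this simplifies after clearing the common factor $(-4\pi)^j\,\Gamma(k+m)/\Gamma(k+j)$ to
\[
(k+m)\binom{m+1}{j} \;=\; (k+m+j)\binom{m}{j} + (k+j-1)\binom{m}{j-1}.
\]
By Pascal's rule $\binom{m+1}{j}=\binom{m}{j}+\binom{m}{j-1}$, this is in turn equivalent to $j\binom{m}{j}=(m-j+1)\binom{m}{j-1}$, which is the standard absorption identity for binomial coefficients.

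The whole argument is essentially routine once the induction is set up; the only step requiring any care is the bookkeeping of indices in the coefficient recursion, which I expect to be the only mild obstacle. A slightly cleaner alternative would be to write $W^m_\pi(\mathrm{diag}(a,1))=e^{-2\pi a}Q^m(a)$ and deduce a first-order recursion for $Q^m$ via \eqref{E:recursive formula}, then recognize the solution in closed form; but the direct induction above is the most transparent route and matches the statement verbatim.
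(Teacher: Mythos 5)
Your proof is correct and is exactly the paper's argument: the paper's proof simply cites \eqref{E:Whittaker function for weight k}, \eqref{E:recursive formula} and induction on $m$, which is precisely the induction you carry out (with the coefficient bookkeeping done explicitly and verified via Pascal's rule and the absorption identity). Nothing further is needed.
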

\begin{proof}
This follows from \eqref{E:Whittaker function for weight k}, \eqref{E:recursive formula} and the induction on $m$.
\end{proof}
\begin{lm}\label{L:matrix coefficient for weight k+2m}
Let $a\in\R^{\x}$ and $x\in\R$. Then 
\[
\cB_{\pi}\left(\rho\left(\pMX 1x01\pMX a001\right)W^m_{\pi}, W^m_{\pi}\right)
\]
is equal to
\[
2^{-k+2m}\pi^{-k}\Gamma(k+m)^2\,\bbI_{\R_-}(a)
\sum_{i,j=0}^m(-2)^{i+j}
\begin{pmatrix}m\\i\end{pmatrix}\begin{pmatrix}m\\j\end{pmatrix}
\frac{\Gamma(k+i+j)}{\Gamma(k+i)\Gamma(k+j)}
\frac{(-a)^{\frac{k}{2}+i}}{\left[(1-a)+\sqrt{-1}\,x\right]^{k+i+j}}.
\]
\end{lm}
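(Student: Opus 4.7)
The plan is a direct unwinding of the definition of $\cB_\pi$ combined with the explicit formula in Lemma~\ref{L:whittaker function for weight k+2m}. First I would write
\[
\cB_\pi\!\left(\rho\!\left(\pMX 1x01\pMX a001\right)W^m_\pi, W^m_\pi\right)
=\int_{\R^\x}W^m_\pi\!\left(\pMX{ta}{tx}{0}{1}\right)W^m_\pi\!\left(\pMX{-t}{0}{0}{1}\right)d^\x t,
\]
and then use the Iwasawa-type factorization $\pMX{ta}{tx}{0}{1}=\pMX{1}{tx}{0}{1}\pMX{ta}{0}{0}{1}$ together with the Whittaker transformation law $W(\pMX{1}{y}{0}{1}g)=\psi(y)W(g)=e^{2\pi\sqrt{-1}y}W(g)$ to strip off the unipotent part.

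Next I would insert the explicit formula from Lemma~\ref{L:whittaker function for weight k+2m}. Because $W^m_\pi(\pMX{b}{0}{0}{1})$ is supported on $b>0$, the factor $\bbI_{\R_+}(ta)\bbI_{\R_+}(-t)$ forces $t<0$ and $a<0$; this produces the indicator $\bbI_{\R_-}(a)$ and justifies the substitution $t=-u$ with $u>0$ and $-a>0$. The two exponentials combine to $e^{-2\pi u[(1-a)+\sqrt{-1}\,x]}$ because $-2\pi ta-2\pi(-t)+2\pi\sqrt{-1}tx=-2\pi u[(1-a)+\sqrt{-1}x]$.

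Then I would expand the two polynomials $P^m_\pi(u(-a))$ and $P^m_\pi(u)$ using the definition in Lemma~\ref{L:whittaker function for weight k+2m}, separate the dependence on $u$ from that on $(-a)$, and interchange the finite sum with the integral. The remaining integral becomes
\[
\int_0^\infty u^{k+i+j}\,e^{-2\pi u[(1-a)+\sqrt{-1}x]}\,\frac{du}{u}
=\frac{\Gamma(k+i+j)}{(2\pi)^{k+i+j}\bigl[(1-a)+\sqrt{-1}x\bigr]^{k+i+j}},
\]
a standard Gamma integral (valid because $\Re[(1-a)+\sqrt{-1}x]=1-a>0$).

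Finally I would collect constants: the outer $2^{2m}$ coming from the two $2^m$ factors in Lemma~\ref{L:whittaker function for weight k+2m}, the $(-4\pi)^{i+j}$ from the polynomial coefficients, and the $(2\pi)^{-(k+i+j)}$ from the Gamma integral combine as
\[
\frac{2^{2m}(-4\pi)^{i+j}}{(2\pi)^{k+i+j}}=2^{-k+2m}\,\pi^{-k}\,(-2)^{i+j},
\]
yielding exactly the formula stated. The calculation is essentially mechanical; the only thing to be careful about is the sign bookkeeping when separating the $u$ and $(-a)$ factors in $(-ua)^{k/2+j}=u^{k/2+j}(-a)^{k/2+j}$, which is the source of the asymmetric $(-a)^{k/2+j}$ appearing in the final answer (no corresponding $(-a)^{k/2+i}$).
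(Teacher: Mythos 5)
Your proposal is correct and follows essentially the same route as the paper's proof: unwind the pairing \eqref{E:bilinear pairing}, strip the unipotent factor via the Whittaker transformation law, insert \lmref{L:whittaker function for weight k+2m} (the support condition forcing $t<0$ and producing $\bbI_{\R_-}(a)$), evaluate the standard Gamma integral, and collect constants exactly as you indicate. The only blemish is the index label in your closing remark: attaching the expansion index $i$ to $P^m_{\pi}(u(-a))$ gives $(-a)^{\frac{k}{2}+i}$ as in the statement, and since the remaining factors of the summand are symmetric in $i,j$ this is merely a relabeling, not a discrepancy.
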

\begin{proof}
By \eqref{E:bilinear pairing} and \lmref{L:whittaker function for weight k+2m} we have
\begin{align*}
&\cB_{\pi}\left(\rho\left(\pMX 1x01\pMX a001\right)W^m_{\pi}, W^m_{\pi}\right)\\
&=
\int_{\R^{\x}}W_{\pi}^m\left(\begin{pmatrix}at&0\\0&1\end{pmatrix}\right)
W_{\pi}^m\left(\begin{pmatrix}-t&0\\0&1\end{pmatrix}\right)\psi(xt)d^{\x}t\\
&=
2^{2m}\int_{\R^{\x}}
P^m_{\pi}(at)P^m_{\pi}(-t) e^{-2\pi\left\lbrace(1-a)+\sqrt{-1}\,x\right\rbrace(-t)}
\cdot\bbI_{\R_+}(at)\bbI_{\R_+}(-t)d^{\x}t\\
&=
2^{2m}\bbI_{\R_-}(a)\sum_{i,j=0}^m(-4\pi)^{i+j}
\begin{pmatrix}m\\i\end{pmatrix}\begin{pmatrix}m\\j\end{pmatrix}
\frac{\Gamma(k+m)^2}{\Gamma(k+i)\Gamma(k+j)}\cdot a^{\frac{k}{2}+i}\cdot I_{ij},
\end{align*}
where 
\begin{align*}
I_{ij}
&=
(-1)^{\frac{k}{2}+i}\int_0^{\infty}t^{k+i+j}
 e^{-2\pi\left[(1-a)+\sqrt{-1}\,x\right] t}d^{\x}t\\
&=
(-1)^{\frac{k}{2}+i}\left(2\pi\left[(1-a)+\sqrt{-1}\,x\right]\right)^{-(k+i+j)}
\Gamma(k+i+j). 
\end{align*}
This proves the lemma.  
\end{proof}
\begin{lm}\label{L:combintorial identity 1}
Let $N$ be a nonnegative integer. We have the following identity
\[
\sum_{i=0}^N (-1)^i\begin{pmatrix}N\\i\end{pmatrix}\frac{\Gamma(z+i)}{\Gamma(w+i)}
=
\frac{\Gamma(z)}{\Gamma(w-z)}\cdot\frac{\Gamma(w-z+N)}{\Gamma(w+N)},
\]
for every $z,w\in\C$.
\end{lm}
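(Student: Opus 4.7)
The plan is to reduce this identity to the binomial theorem via a Beta-function integral representation; in hypergeometric language this is exactly the Chu--Vandermonde summation ${}_2F_1(-N, z; w; 1) = (w-z)_N/(w)_N$, but an integral proof is more self-contained and avoids introducing additional notation the paper has not set up.

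First, I would restrict to the region $\mathrm{Re}(z) > 0$ and $\mathrm{Re}(w-z) > 0$, in which the Beta integral
$$\frac{\Gamma(z+i)}{\Gamma(w+i)} \;=\; \frac{1}{\Gamma(w-z)}\int_0^1 t^{z+i-1}(1-t)^{w-z-1}\,dt$$
is valid uniformly in $i \ge 0$. Substituting this into the left-hand side of the lemma and interchanging the finite sum with the integral (which is immediate since the sum has $N+1$ terms) converts the expression into
$$\frac{1}{\Gamma(w-z)}\int_0^1 t^{z-1}(1-t)^{w-z-1}\Bigl(\sum_{i=0}^N \binom{N}{i}(-t)^i\Bigr)\,dt.$$
The binomial theorem collapses the inner sum to $(1-t)^N$, and the remaining integral is the Beta function $B(z,\, w-z+N) = \Gamma(z)\Gamma(w-z+N)/\Gamma(w+N)$. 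Dividing by the front factor $\Gamma(w-z)$ gives precisely the right-hand side.

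To extend the identity to all $(z,w) \in \C^2$ outside the polar locus of the Gamma factors, I would invoke the identity theorem for meromorphic functions: both sides are meromorphic in $(z,w)$ and agree on the open set where the integral representation is valid, hence agree on their common domain of definition. There is essentially no obstacle; the only checks are the standard Beta-integral evaluation and the trivial interchange of a finite sum with the integral. Alternatively, one could prove the identity in one line by induction on $N$ using Pascal's rule, but the integral argument is cleaner and exhibits where the ``magic'' cancellation comes from.
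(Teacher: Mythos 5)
Your argument is correct. Note, however, that the paper does not prove this lemma at all: it simply cites Lemma 2.1 of Ikeda (1998), which is exactly this identity (in hypergeometric terms, the Chu--Vandermonde evaluation ${}_2F_1(-N,z;w;1)=(w-z)_N/(w)_N$ that you mention). So your proposal differs from the paper only in that you supply a self-contained proof where the paper outsources it; the Beta-integral computation you give --- write $\Gamma(z+i)/\Gamma(w+i)=\Gamma(w-z)^{-1}\int_0^1 t^{z+i-1}(1-t)^{w-z-1}\,dt$ for $\mathrm{Re}(z)>0$, $\mathrm{Re}(w-z)>0$, pull the finite sum inside, apply the binomial theorem to get $(1-t)^N$, and evaluate the resulting Beta integral --- is the standard proof of the cited result and is carried out correctly, including the harmless interchange of a finite sum with the integral. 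The analytic-continuation step is also fine, with the understanding (implicit in the statement ``for every $z,w\in\C$'') that the identity is one of meromorphic functions in $(z,w)$; a small streamlining is to observe that $\Gamma(w-z+N)/\Gamma(w-z)=(w-z)(w-z+1)\cdots(w-z+N-1)$ is a polynomial, so the right-hand side is $\Gamma(z)(w-z)_N/\Gamma(w+N)$, which makes the common domain of validity transparent (poles only at $z\in\Z_{\le 0}$) and makes the identity-theorem argument immediate. Your approach buys self-containedness at the cost of a few lines; the paper's citation buys brevity.
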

\begin{proof}
This is  \cite[Lemma 2.1]{Ikeda1998}.
\end{proof}
\begin{lm}\label{L:norm for weight k+2m}
We have
\[
\cB_{\pi}\left(\rho\left(\begin{pmatrix}-1&0\\0&1\end{pmatrix}\right) W^m_{\pi}, W^m_{\pi}\right)
=
4^{-k+m}\pi^{-k}\Gamma(k+m)\Gamma(m+1).
\]
\end{lm}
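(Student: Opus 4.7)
The plan is to specialize the formula in \lmref{L:matrix coefficient for weight k+2m} to $(a,x) = (-1,0)$, and then collapse the resulting double sum using \lmref{L:combintorial identity 1}.

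First I would note that $\begin{pmatrix}1&0\\0&1\end{pmatrix}\begin{pmatrix}-1&0\\0&1\end{pmatrix} = \begin{pmatrix}-1&0\\0&1\end{pmatrix}$, so the left-hand side of the statement is exactly the specialization $a = -1$, $x = 0$ of the quantity computed in \lmref{L:matrix coefficient for weight k+2m}. Since $\bbI_{\R_-}(-1) = 1$, $1-(-1) = 2$, and $(-a)^{k/2+i} = 1$, substituting gives
\[
\cB_{\pi}\left(\rho\left(\begin{pmatrix}-1&0\\0&1\end{pmatrix}\right) W^m_{\pi}, W^m_{\pi}\right)
=
2^{-2k+2m}\pi^{-k}\Gamma(k+m)^2 \cdot S,
\]
where
\[
S = \sum_{i,j=0}^m (-1)^{i+j}\binom{m}{i}\binom{m}{j}\frac{\Gamma(k+i+j)}{\Gamma(k+i)\Gamma(k+j)}.
\]
The goal is then to show $S = \Gamma(m+1)/\Gamma(k+m)$, which combined with $2^{-2k+2m} = 4^{-k+m}$ yields the claimed formula.

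Next I would fix $j$ and apply \lmref{L:combintorial identity 1} to the inner $i$-sum with $z = k+j$, $w = k$, $N = m$:
\[
\sum_{i=0}^{m}(-1)^i\binom{m}{i}\frac{\Gamma(k+j+i)}{\Gamma(k+i)}
=
\frac{\Gamma(k+j)}{\Gamma(-j)}\cdot\frac{\Gamma(m-j)}{\Gamma(k+m)}
=
\frac{\Gamma(k+j)\,(-j)_m}{\Gamma(k+m)},
\]
where $(-j)_m = (-j)(-j+1)\cdots(-j+m-1)$ is the Pochhammer symbol (the apparent poles of $\Gamma(-j)$ and $\Gamma(m-j)$ cancel in this ratio). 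Consequently
\[
S = \frac{1}{\Gamma(k+m)}\sum_{j=0}^{m}(-1)^j\binom{m}{j}(-j)_m.
\]

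The key observation is that $(-j)_m$ contains the factor $(-j+j) = 0$ whenever $0 \le j < m$, so every term in this sum vanishes except $j = m$; and $(-m)_m = (-1)^m m!$. Hence $S = \dfrac{(-1)^m (-1)^m m!}{\Gamma(k+m)} = \dfrac{\Gamma(m+1)}{\Gamma(k+m)}$, which delivers the lemma. This step is genuinely the crux—everything else is bookkeeping—but it is straightforward once one recognizes the hidden Pochhammer vanishing, which is the standard trick behind the binomial identity $\sum_j (-1)^j\binom{m}{j} p(j) = 0$ for polynomials $p$ of degree less than $m$.
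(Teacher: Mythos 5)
Your proof is correct and follows essentially the same route as the paper: specialize Lemma~\ref{L:matrix coefficient for weight k+2m} at $a=-1$, $x=0$, then collapse the double sum with Lemma~\ref{L:combintorial identity 1} applied to one index (you do the $i$-sum with $z=k+j$, the paper does the $j$-sum with $z=k+i$ --- identical by symmetry), so that only the extreme term survives. Your explicit Pochhammer-vanishing justification is just a more careful rendering of the paper's terse $\Gamma(m-i)/\Gamma(-i)$ and $\Gamma(0)/\Gamma(-m)$ limiting evaluation, and all constants check out.
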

\begin{proof}
By \lmref{L:matrix coefficient for weight k+2m} we have
\[
\cB_{\pi}\left(\rho\left(\begin{pmatrix}-1&0\\0&1\end{pmatrix}\right) W^m_{\pi}, W^m_{\pi}\right)
=
4^{-k+m}\pi^{-k}\Gamma(k+m)^2
\sum_{i,j=0}^m(-1)^{i+j}
\begin{pmatrix}m\\i\end{pmatrix}\begin{pmatrix}m\\j\end{pmatrix}
\frac{\Gamma(k+i+j)}{\Gamma(k+i)\Gamma(k+j)}.
\]
Applying \lmref{L:combintorial identity 1}, we find that
\begin{align*}
\sum_{i,j=0}^m(-1)^{i+j}
\begin{pmatrix}m\\i\end{pmatrix}\begin{pmatrix}m\\j\end{pmatrix}
\frac{\Gamma(k+i+j)}{\Gamma(k+i)\Gamma(k+j)}
&=
\sum_{i=0}^m\begin{pmatrix}m\\i\end{pmatrix}\frac{(-1)^i}{\Gamma(k+i)}
\sum_{j=0}^m (-1)^j\begin{pmatrix}m\\j\end{pmatrix}\frac{\Gamma(k+i+j)}{\Gamma(k+j)}\\
&=
\sum_{i=0}^m (-1)^i\begin{pmatrix}m\\i\end{pmatrix}\frac{\Gamma(m-i)}{\Gamma(-i)\Gamma(k+m)}\\
&=
(-1)^m\frac{\Gamma(0)}{\Gamma(-m)\Gamma(k+m)}
=
\frac{\Gamma(m+1)}{\Gamma(k+m)}.
\end{align*}
This proves the lemma.
\end{proof}
Combining the above results, we obtain the following corollary.
\begin{cor}\label{C:mc for real with V_+ and tau}
Let $m\in\Z_{\geq 0}$, $x\in\R$ and $a\in\R^{\x}$. We have
\begin{itemize}
\item[(1)]
\begin{align*}
&\Phi_{\pi}\left(\pMX ax01;V^m_+\right)\\
=&
2^{k+2m}\,\frac{\Gamma(k+m)^2}{\Gamma(k)}\,\bbI_{\R_-}(a)
\sum_{i,j=0}^m(-2)^{i+j}
\begin{pmatrix}m\\i\end{pmatrix}\begin{pmatrix}m\\j\end{pmatrix}
\frac{\Gamma(k+i+j)}{\Gamma(k+i)\Gamma(k+j)} 
\frac{(-a)^{\frac{k}{2}+i}}{\left[(1-a)+\sqrt{-1}\,x\right]^{k+i+j}}.
\end{align*}
\item[(2)]
\[
\Phi_{\pi}\left(\pMX ax01;\tau_\R\right)
=
2^k\,
\frac{(-a)^{\frac{k}{2}}}{\left[(1-a)-\sqrt{-1}\,x\right]^{k}}\,\bbI_{\R_-}(a).
\]
\end{itemize}
\end{cor}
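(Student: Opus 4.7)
The plan is to realize $\pi$ (and $\tilde\pi\cong\pi$) in the Whittaker model $\cW(\pi,\psi)$ with bilinear pairing $\cB_\pi$ as in \eqref{E:bilinear pairing}, and to take as $\phi_\pi=\phi_{\tilde\pi}$ the weight-$k$ Whittaker function $W_\pi=W_\pi^0$ defined by \eqref{E:Whittaker function for weight k}. Since $W_\pi^m=\rho(V_+^m)W_\pi$, both the numerator and the denominator of $\Phi_\pi(h;t)$ are matrix coefficients of the form $\cB_\pi(\rho(g)W_\pi^{m'}, W_\pi^{m'})$ already handled by \lmref{L:matrix coefficient for weight k+2m} and \lmref{L:norm for weight k+2m}. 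So the proof of both parts reduces to writing each side as a ratio of values supplied by these two lemmas and simplifying.

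For part (1), the numerator $\cB_\pi(\pi(hV_+^m)\phi_\pi,\tilde\pi(V_+^m)\phi_{\tilde\pi})$ becomes $\cB_\pi(\rho(h)W_\pi^m,W_\pi^m)$ with $h=\begin{pmatrix}a&x\\0&1\end{pmatrix}$, so \lmref{L:matrix coefficient for weight k+2m} yields the double sum directly. The denominator $\cB_\pi(\pi(\tau_\R)\phi_\pi,\phi_{\tilde\pi})=\cB_\pi(\rho(\tau_\R)W_\pi,W_\pi)$ is the $m=0$ case of \lmref{L:norm for weight k+2m}, giving $4^{-k}\pi^{-k}\Gamma(k)$. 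Dividing and combining powers of $2$ produces the claimed factor $2^{k+2m}\Gamma(k+m)^2/\Gamma(k)$ in front of the sum.

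For part (2), the key move is to use $\G(\R)$-invariance of $\cB_\pi$, namely
\[
\cB_\pi\bigl(\pi(h\tau_\R)\phi_\pi,\tilde\pi(\tau_\R)\phi_{\tilde\pi}\bigr)
=\cB_\pi\bigl(\pi(\tau_\R h\tau_\R)\phi_\pi,\phi_{\tilde\pi}\bigr),
\]
together with $\tau_\R^2=1$ and the direct calculation $\tau_\R\begin{pmatrix}a&x\\0&1\end{pmatrix}\tau_\R=\begin{pmatrix}a&-x\\0&1\end{pmatrix}$. Then \lmref{L:matrix coefficient for weight k+2m} with $m=0$ and $x$ replaced by $-x$ gives the numerator, and the denominator is again the $m=0$ case of \lmref{L:norm for weight k+2m}; the ratio collapses to $2^k(-a)^{k/2}/[(1-a)-\sqrt{-1}\,x]^k\cdot\bbI_{\R_-}(a)$.

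The work is entirely bookkeeping rather than genuinely difficult; the only step that requires care is tracking the powers of $2$ and $\pi$ and the $\Gamma$-factors when taking the ratio, and, in part (2), correctly applying the invariance of $\cB_\pi$ so that the sign flip $x\mapsto -x$ appears in the denominator $[(1-a)-\sqrt{-1}\,x]^k$ rather than in $[(1-a)+\sqrt{-1}\,x]^k$.
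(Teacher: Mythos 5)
Your proposal is correct and follows exactly the route the paper intends when it says ``combining the above results'': realize $\pi=\tilde\pi$ in the Whittaker model with $\phi_\pi=W_\pi$, get the numerator from \lmref{L:matrix coefficient for weight k+2m} (after conjugating $h$ by $\tau_\R$ in part (2), which flips $x\mapsto -x$) and the denominator from the $m=0$ case of \lmref{L:norm for weight k+2m}, and the constants $2^{k+2m}\Gamma(k+m)^2/\Gamma(k)$ and $2^k$ come out as you describe. No gaps.
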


\subsection{A formula of $\Phi_\pi(h,t)$: the non-archimedean case}
Let $F$ be a non-archimdean local field. Let $B(F)$ be the subgroup of upper triangular matrices in $\G(F)$. Denote by ${\rm St}_F$ the Steinberg representation of $\G(F)$. Namely, ${\rm St}_F$ is the unique irreducible subrepresentation in the induced 
representation 
\[
{\rm Ind}_{B(F)}^{\G(F)}(|\cdot|_F^{1/2}\bt|\cdot|_F^{-1/2}).
\] 

\begin{lm}\label{L:macdonald formula}
Suppose that $\pi={\rm Ind}_{B(F)}^{\G(F)}\left(|\cdot|^{\lambda}_F\bt|\cdot|_F^{-\lambda}\right)$ 
is spherical. Let $\alpha=|\varpi|^\lambda_F$. Then for $n\in\Z$, we have
\[
\Phi_{\pi}\left(\begin{pmatrix}\varpi_F^n&0\\0&1\end{pmatrix}\right)
=
\frac{q_F^{-|n|/2}}{1+q_F^{-1}}
\left( 
\alpha^{|n|}\cdot\frac{1-\alpha^{-2}q_F^{-1}}{1-\alpha^{-2}}
+
\alpha^{-|n|}\cdot\frac{1-\alpha^2q_F^{-1}}{1-\alpha^2}
\right) 
\]  
\end{lm}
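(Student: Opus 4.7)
The plan is to work in the spherical principal series model. Realize $\pi$ as the right-regular representation on smooth functions $\phi\colon\G(F)\to\C$ transforming on the left by $\phi\!\left(\pMX{a}{x}{0}{d}g\right) = |a/d|_F^{\lambda+1/2}\phi(g)$, and let $\phi_0$ be the unique spherical vector normalized by $\phi_0|_K=1$; then $\phi_0$ spans the new line $V_\pi^{\rm new}$. With the canonical pairing $\cB_\pi(\phi,\t\phi) = \int_K \phi(k)\t\phi(k)\,dk$ between $\pi$ and $\t\pi\simeq{\rm Ind}(|\cdot|^{-\lambda}\bt|\cdot|^{\lambda})$, and since $\tau_F = 1$ in the non-archimedean case, \defref{D:matrix coefficient} reduces to $\Phi_\pi(g) = \int_K \phi_0(gk)\,dk$ under ${\rm vol}(K)=1$, so in particular $\Phi_\pi(1)=1$.

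Second, I would invoke the Hecke algebra action. The vector $\phi_0$ is an eigenvector of the spherical Hecke algebra $\cH(\G(F),K)$, and the characteristic function of the double coset $K\,\diag{\varpi_F,1}\,K$ acts with Satake eigenvalue $q_F^{1/2}(\alpha+\alpha^{-1})$. Decomposing this double coset into its $q_F+1$ right $K$-cosets and integrating against $\phi_0$, one extracts the three-term recursion
\begin{equation*}
\Phi_\pi\!\left(\diag{\varpi_F^{n+1},1}\right) + q_F\,\Phi_\pi\!\left(\diag{\varpi_F^{n-1},1}\right) = q_F^{1/2}(\alpha+\alpha^{-1})\,\Phi_\pi\!\left(\diag{\varpi_F^n,1}\right)
\end{equation*}
for $n\geq 1$. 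The characteristic polynomial $X^2 - q_F^{1/2}(\alpha+\alpha^{-1})X + q_F$ has roots $q_F^{1/2}\alpha^{\pm 1}$, so $\Phi_\pi\!\left(\diag{\varpi_F^n,1}\right) = q_F^{-n/2}(A\alpha^n + B\alpha^{-n})$ for $n\geq 0$, with constants $A,B$ depending only on $\alpha$ and $q_F$.

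Third, I would pin down $A$ and $B$ from boundary data. The normalization $\Phi_\pi(1)=1$ forces $A+B=1$, and a direct Iwasawa computation of $\Phi_\pi\!\left(\diag{\varpi_F,1}\right)$ — stratifying $K = I\sqcup IwI$ by the Iwahori and evaluating the induced character on the Iwasawa factor of $\diag{\varpi_F,1}\cdot n(x)w$ as $x$ runs through representatives of $\cO_F/\varpi_F\cO_F$ — yields $q_F^{-1/2}(\alpha+\alpha^{-1})/(1+q_F^{-1})$. Solving the resulting $2\times 2$ linear system gives
\begin{equation*}
A = \frac{1-\alpha^{-2}q_F^{-1}}{(1+q_F^{-1})(1-\alpha^{-2})},\qquad B = \frac{1-\alpha^{2}q_F^{-1}}{(1+q_F^{-1})(1-\alpha^{2})},
\end{equation*}
and substituting back yields the asserted formula for $n\geq 0$. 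The case $n<0$ follows from the bi-$K$-invariance of $\Phi_\pi$ combined with the identity $\diag{\varpi_F^{-n},1} = \varpi_F^{-n}\cdot w^{-1}\diag{\varpi_F^n,1}\,w$ and the triviality of the central character of $\pi$.

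The main obstacle is the $n=1$ Iwasawa computation itself, as it demands careful tracking of $\delta_B^{1/2}$ against the inducing character $|\cdot|^\lambda\bt|\cdot|^{-\lambda}$ on each Iwahori stratum. All normalizations (of $\phi_0$, of the Haar measure on $K$, and of $\cB_\pi$) must be handled consistently; however, they cancel in the ratio defining $\Phi_\pi$, leaving only the combinatorial identities that assemble into the two Harish-Chandra $c$-function factors $\frac{1-\alpha^{\mp 2}q_F^{-1}}{1-\alpha^{\mp 2}}$ appearing in the statement.
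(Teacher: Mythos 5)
Your strategy (re-deriving Macdonald's formula from the Hecke recursion plus boundary data) is legitimate and is genuinely different from the paper, whose ``proof'' of \lmref{L:macdonald formula} is simply a citation of Macdonald's formula, \cite[Theorem 4.6.6]{Bump1998}. However, as written your argument contains a step that fails: the three-term recursion has its coefficients interchanged. Decomposing $K\pMX{\varpi_F}{0}{0}{1}K=\bigsqcup_{x\bmod \varpi_F}\pMX{\varpi_F}{x}{0}{1}K\sqcup\pMX{1}{0}{0}{\varpi_F}K$, the $q_F$ cosets of the first kind carry $\pMX{\varpi_F^n}{0}{0}{1}$ to the Cartan type $\pMX{\varpi_F^{n+1}}{0}{0}{1}$ (for $n\geq 1$), while the single remaining coset lowers $n$ by one after dividing by the center; equivalently, in the Bruhat--Tits tree a vertex at distance $n\geq 1$ has $q_F$ neighbours farther from the origin and exactly one nearer. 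Hence the correct relation is $q_F\,\Phi_\pi(n+1)+\Phi_\pi(n-1)=q_F^{1/2}(\alpha+\alpha^{-1})\Phi_\pi(n)$, not $\Phi_\pi(n+1)+q_F\,\Phi_\pi(n-1)=q_F^{1/2}(\alpha+\alpha^{-1})\Phi_\pi(n)$. This matters: your recursion has characteristic roots $q_F^{1/2}\alpha^{\pm1}$, whose general solution is $q_F^{+n/2}(A\alpha^n+B\alpha^{-n})$, so your passage from those roots to the ansatz $q_F^{-n/2}(A\alpha^n+B\alpha^{-n})$ is a non sequitur; the ansatz you actually solve with the boundary data is the general solution of the \emph{correct} recursion (roots $q_F^{-1/2}\alpha^{\pm1}$), which is why your final constants nevertheless agree with the lemma. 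One can check the correct recursion directly against the stated formula, e.g.\ $q_F\Phi_\pi(2)+1=q_F^{1/2}(\alpha+\alpha^{-1})\Phi_\pi(1)$ with $\Phi_\pi(1)=q_F^{-1/2}(\alpha+\alpha^{-1})/(1+q_F^{-1})$, while your version fails already there.

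A second, smaller slip: with the right-translation action used in the paper, the normalized matrix coefficient in the induced model is $\Phi_\pi(g)=\int_K\phi_0(kg)\,dk$, not $\int_K\phi_0(gk)\,dk$; the latter equals $\phi_0(g)$ by right $K$-invariance, is not bi-$K$-invariant, and would give $\phi_0\left(\pMX{\varpi_F}{0}{0}{1}\right)=\alpha q_F^{-1/2}$ rather than your (correct) value of $\Phi_\pi(1)$, and the $n<0$ reduction via $\pMX{\varpi_F^{-n}}{0}{0}{1}=\varpi_F^{-n}\,w^{-1}\pMX{\varpi_F^n}{0}{0}{1}w$ and the trivial central character would not apply to it. Both slips are repairable without changing your plan: with $\Phi_\pi(g)=\int_K\phi_0(kg)\,dk$, the corrected recursion, the boundary values $\Phi_\pi(0)=1$ and $\Phi_\pi(1)=q_F^{-1/2}(\alpha+\alpha^{-1})/(1+q_F^{-1})$, and the $|n|$-symmetry, your computation of $A$ and $B$ goes through and yields exactly the two $c$-function factors in the statement.
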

\begin{proof}This is Macdonald's formula. For example, see \cite[Theorem 4.6.6]{Bump1998}.
\end{proof}
\begin{lm}\label{L:mc for unramified special}
Suppose $\pi={\rm St}_F\ot\chi$, where $\chi$ is a unramified quadratic character of $F^{\x}$. 
Then for $n\in\Z$, we have 
\[
\Phi_{\pi}\left(\begin{pmatrix}\varpi_F^n&0\\0&1\end{pmatrix}\right)
=
\chi(\varpi_F^n)q_F^{-|n|}
\quad\text{and}\quad
\Phi_{\pi}\left(\pMX 0110\begin{pmatrix}\varpi_F^n&0\\0&1\end{pmatrix}\right)
=
-\chi(\varpi_F^n)q_F^{-|n-1|}.
\]
\end{lm}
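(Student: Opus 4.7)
The plan is to reduce the twisted case $\pi={\rm St}_F\ot\chi$ to untwisted Steinberg via the character twist, and then compute the remaining Steinberg matrix coefficients inside the principal series containing ${\rm St}_F$. For any character $\chi$ of $F^{\x}$, the twist acts on the same underlying space as ${\rm St}_F$ via $\pi(g)v=\chi(\det g)\,{\rm St}_F(g)v$, while the contragredient twists by $\chi^{-1}=\chi$ so as to preserve the invariant pairing; substituting in \defref{D:matrix coefficient} yields $\Phi_\pi(h)=\chi(\det h)\,\Phi_{{\rm St}_F}(h)$. Because $\chi$ is unramified we have $\chi(-1)=1$, and hence $\chi(\det(w\diag{\varpi_F^n,1}))=\chi(-\varpi_F^n)=\chi(\varpi_F^n)$, matching the scalar $\chi(\varpi_F^n)$ appearing in both stated formulas. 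It thus suffices to prove the identities when $\chi$ is trivial.

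For untwisted Steinberg, realize ${\rm St}_F$ as the unique irreducible subrepresentation of the normalized principal series $\pi_{\rm ps}:={\rm Ind}_{B(F)}^{\G(F)}(|\cdot|_F^{1/2}\bt|\cdot|_F^{-1/2})$. By the Bruhat decomposition $B(F)\bksl\G(F)/\cU_0(\varpi_F\cO_F)=\stt{1,w}$, the Iwahori-fixed subspace $\pi_{\rm ps}^{\cU_0(\varpi_F\cO_F)}$ is two-dimensional, spanned by functions $\phi_1,\phi_w$ supported on $B(F)\cU_0(\varpi_F\cO_F)$ and $B(F)w\cU_0(\varpi_F\cO_F)$ respectively, normalized by $\phi_1(1)=\phi_w(w)=1$. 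The new line of ${\rm St}_F\subset\pi_{\rm ps}$ is characterized as the kernel of the $\G(F)$-equivariant projection onto the trivial quotient, and an elementary computation pins it down explicitly up to a scalar that cancels in \defref{D:matrix coefficient}.

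Finally, equip $\pi_{\rm ps}\x\t{\pi_{\rm ps}}$ with the standard invariant pairing $\cB(f,\t{f})=\int_{\G(\cO_F)}f(k)\t{f}(k)\,dk$, which restricts nondegenerately to ${\rm St}_F\x\t{{\rm St}_F}$; since $\tau_F=1$ non-archimedeanly, the denominator in \defref{D:matrix coefficient} reduces to $\cB(\phi^{\rm new},\phi^{\rm new})$. For each of the two test elements $h\in\stt{\diag{\varpi_F^n,1},w\diag{\varpi_F^n,1}}$, partition $\G(\cO_F)$ into its $q_F+1$ Iwahori cosets $\cU_0(\varpi_F\cO_F)\sqcup\bigsqcup_{u\in\cO_F/\varpi_F\cO_F}wu\cU_0(\varpi_F\cO_F)$, Iwasawa-decompose $hk$ on each piece, and sum the resulting finitely many values of the inducing character $|\cdot|_F^{1/2}\bt|\cdot|_F^{-1/2}$ on torus elements. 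The outcome is a short geometric sum in $q_F^{-1}$, yielding $q_F^{-|n|}$ and $-q_F^{-|n-1|}$ after dividing by the normalization. The only genuine obstacle is bookkeeping—locating $\phi^{\rm new}$ inside the subrepresentation rather than the trivial quotient, and tracking the Iwasawa pieces and signs carefully in the second case—no analytic difficulty arises.
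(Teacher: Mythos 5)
Your proposal is correct, and it takes a genuinely different route from the paper. The paper works in the Whittaker model: it quotes Schmidt's formula for the newform Whittaker function of ${\rm St}_F\ot\chi$ and his formula for the action of the Weyl element on it, then evaluates the pairing \eqref{E:bilinear pairing} (with $\R^{\x}$ replaced by $F^{\x}$) by elementary torus integrals. You instead (i) strip off the unramified quadratic twist via $\Phi_{\pi}(h)=\chi(\det h)\Phi_{{\rm St}_F}(h)$, using $\chi(-1)=1$ and the fact that the new line and the invariant pairing are unchanged by an unramified twist, and (ii) compute the untwisted Steinberg matrix coefficients in the induced model ${\rm Ind}_{B(F)}^{\G(F)}(|\cdot|_F^{1/2}\bt|\cdot|_F^{-1/2})$ using the two-dimensional Iwahori-fixed space, the $K$-integral pairing, and Iwasawa decompositions. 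Your route is self-contained (no appeal to Schmidt's newform computations) and makes the bi-Iwahori-invariance structurally transparent; the paper's route is shorter because the newform Whittaker values are taken off the shelf, and it handles $\chi$ with no extra step. Two small points need care when you write your argument out. First, the contragredient Steinberg is a quotient, not a subrepresentation, of $\t{\pi}_{\rm ps}={\rm Ind}_{B(F)}^{\G(F)}(|\cdot|_F^{-1/2}\bt|\cdot|_F^{1/2})$, so the pairing does not literally "restrict to ${\rm St}_F\x\t{{\rm St}}_F$"; rather, since ${\rm St}_F$ is orthogonal to the line of constants in $\t{\pi}_{\rm ps}$, the pairing descends, and pairing against any Iwahori-fixed vector of $\t{\pi}_{\rm ps}$ with nonzero image in the quotient computes the desired matrix coefficient up to a scalar that cancels in the normalized ratio of \defref{D:matrix coefficient}. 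Second, your coset decomposition should read $\G(\cO_F)=\cU_0(\varpi_F\cO_F)\sqcup\bigsqcup_{u\in\cO_F/\varpi_F\cO_F}n(u)w\,\cU_0(\varpi_F\cO_F)$ with $n(u)$ upper unipotent (or the analogous lower-unipotent version); the cosets $w\,n(u)\,\cU_0(\varpi_F\cO_F)$ all coincide, so the order you wrote does not give $q_F+1$ distinct pieces. With these adjustments the computation goes through exactly as you predict: for example, the new vector is proportional to $q_F\phi_1-\phi_w$, and one finds $q_F^{-|n|}$ and $-q_F^{-|n-1|}$ after dividing by the normalization, in agreement with the lemma.
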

\begin{proof}
For the ease of notation, we put $\varpi=\varpi_F$ and $q=q_F$. 
Let $\psi$ be an additive character of $F$ of order zero. Let $\cW(\pi,\psi)$ be the Whittaker model
of $\pi$ with respect to $\psi$. Since $\pi$ is self-dual, we have $\cW(\t{\pi},\psi)=\cW(\pi,\psi)$
by the uniqueness of the Whittaker model. Let $W_{\pi}\in\cW(\pi,\psi)$ be the newform with 
$W_{\pi}(1)=1$.
By \cite[Summary]{Schmidt2002} we have
\[
W_{\pi}\left(\pMX a001\right)
=
\chi(a)|a|_F\cdot\bbI_{\cO_F}(a),\quad a\in F^{\x}.
\]
Using \cite[Prop. 3.1.2]{Schmidt2002}, one can deduce that
\begin{equation}\label{E:action of Weyl element for unramified special}
W_{\pi}\left(\pMX a001\pMX0110\right)
=
-\chi(\varpi)\rho\left(\begin{pmatrix}-\varpi&0\\0&1\end{pmatrix}\right)
W_{\pi}\left(\pMX a001\right)
=
-q^{-1}\chi(a)|a|_F\cdot\bbI_{\varpi^{-1}\cO_F}(a).
\end{equation}

Let $\cB_{\pi}:\cW(\pi,\psi)\x\cW(\t{\pi},\psi)\rightarrow\C$ be the $\G(F)$-invariant bilinear 
pairing given by \eqref{E:bilinear pairing} with $\R^{\x}$ replaced by $F^{\x}$. The Haar measure 
$d^{\x}t$ on $F^{\x}$ is determined by ${\rm Vol}\left(\cO_F^{\x}, d^{\x}t\right)=1$.   
Let $n\in\Z$. 
\begin{align*}
\cB_{\pi}\left(\rho\left(\begin{pmatrix}\varpi^n&0\\0&1\end{pmatrix}\right) W_{\pi}, W_{\pi}\right)
&=
\int_{F^{\x}}W_{\pi}\left(\begin{pmatrix}\varpi^nt&0\\0&1\end{pmatrix}\right)
W_{\pi}\left(\begin{pmatrix}-t&0\\0&1\end{pmatrix}\right) d^{\x}t\\
&=
\chi(\varpi^n)q^{-n}\int_{\varpi^{\delta(n)}\cO_F}|t|^2d^{\x}t\\
&=
\chi(\varpi^n)q^{-n-2\delta(n)}\cdot\zeta_F(2),
\end{align*}
where
\[
\delta(n)
=
\begin{cases}
0\quad&\text{if $n\geq 0$},\\
-n\quad&\text{if $n<0$}.
\end{cases}
\]
In particular, we have
\begin{equation}\label{E:norm for unramified special}
\cB_{\pi}(W_{\pi}, W_{\pi})=\zeta_F(2),
\end{equation}
and hence
\[
\Phi_{\pi}\left(\begin{pmatrix}\varpi^n&0\\0&1\end{pmatrix}\right)
=
\chi(\varpi^n)q^{-n-2\delta(n)}=\chi(\varpi^n)q^{|n|}.
\]
Using \eqref{E:action of Weyl element for unramified special} and by similar calculations,
we find that 
\begin{align*}
\cB_{\pi}\left(\rho\left(\pMX 0110\begin{pmatrix}\varpi^n&0\\0&1\end{pmatrix}\right) 
W_{\pi}, W_{\pi}\right)
=
-\chi(\varpi^n)q^{-n-1-2\delta'(n)}\cdot\zeta_F(2),
\end{align*}
where
\[
\delta'(n)
=
\begin{cases}
-1\quad&\text{if $n\geq 1$},\\
-n\quad&\text{if $n<1$}.
\end{cases}
\]
Since $\cB_{\pi}(W_{\pi}, W_{\pi})=\zeta_F(2)$ by \eqref{E:norm for unramified special}, 
we obtain
\[
\Phi_{\pi}\left(\pMX 0110\begin{pmatrix}\varpi^n&0\\0&1\end{pmatrix}\right)
=
-\chi(\varpi^n)q^{-n-1-2\delta'(n)}
=
-\chi(\varpi^n)q^{-|n-1|}.
\]
This finishes the computation.
\end{proof}

\section{The calculation of local zeta integral (I)}
\label{S:local zeta integral: matrix algebra}
In this section, let $D={\rm M}_2(F)$. We compute the normalized local zeta integral $I^*(\itPi,\bft)$ in \defref{D: definition of zeta integral}.
\subsection{Haar measures}\label{SS:measure for matrix algebra}
If $F=\R$, let $dx$ be the usual Lebesgue measure on $\R$, and the Haar measure $d^{\x}x$ on 
$\R^{\x}$ is given by $|x|_\R^{-1}d^{\x}x$. The Haar measure $dh$ on $\G(\R)$ is given by
\[
dh=\frac{dz}{|z|_\R}\frac{dxdy}{|y|_\R^2}dk
\]
for 
$
h
= 
z
\begin{pmatrix}
1&x\\0&1
\end{pmatrix}
\begin{pmatrix}
y&0\\0&1
\end{pmatrix}k
$ 
with $x\in\R, y\in\R^{\x}, z\in\R_+^{\x}, k\in{\rm SO}(2)$, where $dx, dy, dz$ are the usual Lebesgue
measures and $dk$ is the Haar measure on ${\rm SO}(2)$ such that ${\rm Vol}({\rm SO}(2), dk)=1$. 

If $F$ is non-archimedean, let $dx$ be the Haar measure on $F$ so that the total volume of $\cO_F$ is equal to $1$ and let $d^{\x}x$ on $F^{\x}$ be the Haar measure on $F^\x$ so that $\cO^{\x}_F$ also has volume 
$1$. On $\G(F)$, we let $dh$ be the Haar measure determined by 
${\rm Vol}\left(\G(\cO_F),dh\right)=1$.

The measure on the quotient space $F^{\x}\backslash \G(F)$ is the unique 
quotient measure induced from the measure $dh$ on $\G(F)$ and the measure $d^{\x}x$ on $F^{\x}$.
\subsection{The archimedean case}
\label{SS:unbalanced case}
Let $\pi_j\,(j=1,2,3)$ be a (limit of) discrete series representation of $\G(\R)$ with minimal 
weight $k_j\geq 1$ and central character ${\rm sgn}^{k_j}$ such that
\[
2\,{\rm max}\stt{k_1,k_2,k_3}\geq k_1+k_2+k_3.
\]
We may assume $k_3={\rm max}\stt{k_1,k_2,k_3}$ and let $2m=k_3-k_1-k_2$ for some integer $m\geq 0$.
 
\begin{prop}\label{P:archimedean unbalanced case}
We have
\[
I^*\left(\itPi,\bft\right)=2^{k_1+k_2-k_3+1}.
\] 
\end{prop}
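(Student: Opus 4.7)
The plan is to evaluate $I(\itPi,\bft)$ directly from the explicit archimedean matrix coefficient formulas of \corref{C:mc for real with V_+ and tau}, and then multiply by the archimedean ratio $\zeta_\R(2)/\zeta_E(2)\cdot L(1,\itPi',\Ad)/L(1/2,\itPi',r)$ defining $I^*(\itPi,\bft)$. Since $E=\R\x\R\x\R$ and $\itPi=\pi_1\boxtimes\pi_2\boxtimes\pi_3$, the $D^\x(E)$-invariant pairing $\cB_\itPi$ factors as a product of three $\G(\R)$-invariant pairings, so the integrand of $I(\itPi,\bft)$ splits as $\prod_{j=1}^{3}\Phi_{\pi_j}(h;\bft_j)$, with $(\bft_1,\bft_2,\bft_3)=(1,\wtd V_+^m,\tau_\R)$ and $h$ sitting diagonally in $\G(\R)^3$.

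Next, use the Iwasawa decomposition $h=z\cdot\pMX{1}{x}{0}{1}\pMX{y}{0}{0}{1}k_\theta$ in $\G(\R)$. The vectors $\pi_j(\bft_j)\phi_{\pi_j}$ have $\SO_2(\R)$-weights $k_1,\,k_2+2m,\,-k_3$ (the last because conjugation by $\tau_\R$ sends $k_\theta$ to $k_{-\theta}$). Thanks to the choice $2m=k_3-k_1-k_2$, these weights sum to zero, so the integrand is right $\SO_2(\R)$-invariant, and the $\SO_2(\R)$-integral (of mass $1$) contributes trivially. All three factors are supported on $y<0$ by the $\bbI_{\R_-}(a)$ in \corref{C:mc for real with V_+ and tau}, so substituting $y=-b$ with $b>0$, the integrand becomes a finite sum indexed by $0\le i,j\le m$ of terms of the shape
\[
C_{i,j}\cdot\frac{b^{(k_1+k_2+k_3)/2+i}}{[(1+b)+\sqrt{-1}\,x]^{k_1+k_2+i+j}\,[(1+b)-\sqrt{-1}\,x]^{k_3}},
\]
with explicit constants $C_{i,j}$ (tracking the prefactors of \corref{C:mc for real with V_+ and tau} together with the scaling factor $(-1/(8\pi))^{2m}$ coming from $\wtd V_+=-V_+/(8\pi)$), to be integrated against $dx\,db/b^2$.

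Carry out the $x$-integral by the Fourier-type identity
\[
\int_{-\infty}^{\infty}\frac{dx}{(A+\sqrt{-1}\,x)^n(A-\sqrt{-1}\,x)^{m'}}=\frac{2\pi\,\Gamma(n+m'-1)}{\Gamma(n)\Gamma(m')\,(2A)^{n+m'-1}}\qquad(A>0,\ n+m'>1),
\]
proved by writing $1/(A+\sqrt{-1}x)^n$ and $1/(A-\sqrt{-1}x)^{m'}$ as Laplace transforms and applying Fourier inversion. Follow this by the Beta integral $\int_0^\infty b^{s-1}(1+b)^{-s-r}db=\Gamma(s)\Gamma(r)/\Gamma(s+r)$ for the $b$-integral. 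The result is a double sum in $(i,j)$ of ratios of Gamma functions, which collapses in closed form after two applications of \lmref{L:combintorial identity 1}---once in $i$, once in $j$.

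Finally, the adjoint $L$-factor $L(1,\itPi',\Ad)=\prod_j\Gamma_\R(2)\Gamma_\C(k_j)$ and the triple product $L$-factor $L(1/2,\itPi',r)$, which is a product of four $\Gamma_\C$ factors indexed by the sign-classes $\pm(k_1-1)\pm(k_2-1)\pm(k_3-1)$, are completely explicit. Combining these with $\zeta_\R(2)/\zeta_E(2)=\zeta_\R(2)^{-2}$ and with the closed form for $I(\itPi,\bft)$, and using Legendre's duplication formula to rewrite $\Gamma_\C((k_1+k_2+k_3)/2-1)$ etc., virtually every Gamma factor cancels, and the remaining power of $2$ is precisely $2^{k_1+k_2-k_3+1}$. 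The main obstacle is exactly this bookkeeping: the powers of $2$ and $\pi$ from $\wtd V_+=-V_+/(8\pi)$ and from \corref{C:mc for real with V_+ and tau}, the Gamma factors from the $x$- and $b$-integrals, and the explicit archimedean $L$-factors must all be tracked simultaneously---the collapse to the clean answer $2^{k_1+k_2-k_3+1}$ is the nontrivial payoff predicted by Ichino's formula.
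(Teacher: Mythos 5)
Your route is the same as the paper's: split the integrand into the three matrix coefficients $\Phi_{\pi_1}(h)\,\Phi_{\pi_2}(h;\wtd V_+^m)\,\Phi_{\pi_3}(h;\tau_\R)$, use the Iwasawa decomposition and right $\SO(2)$-invariance (your weight count $k_1+(k_2+2m)-k_3=0$ is a nice justification of what the paper merely asserts), restrict to $y<0$ by the support condition in \corref{C:mc for real with V_+ and tau}, do the $x$-integral and the radial integral by exactly the two formulas of \lmref{L:integral formula 1} (your general-$A$ version is the paper's version after rescaling $x\mapsto(1+b)x$), and finish with the archimedean $L$-factor bookkeeping. Up to that point everything is sound and matches the paper's proof.

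The one step that fails as written is the collapse of the resulting double sum. After the two integrals, the summand factors into separable pieces $\Gamma(k_3-m+i-1)/\Gamma(k_2+i)$ and $\Gamma(k_3-m+j)/\Gamma(k_2+j)$ times a genuinely coupled piece $\Gamma(k_2+i+j)/\Gamma(k_1+k_2+i+j)$ (recall $k_3-2m=k_1+k_2$). Hence for fixed $j$ the inner sum is
\begin{equation*}
\sum_{i=0}^m(-1)^i\binom{m}{i}\,
\frac{\Gamma(k_2+j+i)\,\Gamma(k_3-m-1+i)}{\Gamma(k_1+k_2+j+i)\,\Gamma(k_2+i)},
\end{equation*}
a terminating balanced ${}_3F_2$-type sum with two Gamma ratios in $i$, not the single-ratio sum $\sum_i(-1)^i\binom{m}{i}\Gamma(z+i)/\Gamma(w+i)$ that \lmref{L:combintorial identity 1} evaluates; the same obstruction appears if you sum over $j$ first, so ``two applications of \lmref{L:combintorial identity 1}'' does not close the computation. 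What is needed is a Saalsch\"utz-type evaluation of the inner sum, which is precisely \lmref{L:combintorial identity 2} (Orloff's identity); the paper applies it with $t=k_2+j$, $\alpha=k_2$, $\beta=k_3-2m-2k_2-j$, and only after that does the remaining sum over $j$ become $\sum_j(-1)^j\binom{m}{j}\Gamma(1+j)/\Gamma(1-m+j)$, i.e.\ a genuine \lmref{L:combintorial identity 1} sum. With that substitution your argument coincides with the paper's, and the final constant-tracking (the factor $(8\pi)^{-2m}$, $\zeta_\R(2)/\zeta_E(2)=\zeta_\R(2)^{-2}$, and the explicit $L(1,\itPi,{\rm Ad})$ and $L(1/2,\itPi,r)$) indeed yields $2^{k_1+k_2-k_3+1}$.
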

\begin{proof}Note that the $L$-factor given by 
\begin{align*}
L(s,\itPi,r)
=
&\zeta_{\C}(s+(k_3+k_2+k_1-3)/2))\zeta_{\C}(s+(k_3-k_2-k_1+1)/2)\\
&\x\zeta_{\C}(s+(k_3-k_2+k_1-1)/2)\zeta_{\C}(s+(k_3+k_2-k_1-1)/2)).
\end{align*}
We proceed to compute $I(\itPi,\bft)$. By definition
\begin{align*}
I\left(\itPi,\bft\right)
&=
\int_{\R^{\x}\backslash\G(\R)}
\Phi_{\pi_1}(h)\Phi_{\pi_2}\left(h;\t{V}^m_+\right)\Phi_{\pi_3}(h;\tau_{\R})dh\\
&=
\left(\frac{1}{8\pi}\right)^{2m}
\int_{\R^{\x}\backslash\G(\R)}
\Phi_{\pi_1}(h)\Phi_{\pi_2}\left(h;V^m_+\right)\Phi_{\pi_3}(h;\tau_{\R})dh.
\end{align*}
Put
\[
\Phi(h)
=
\Phi_{\pi_1}(h)\Phi_{\pi_2}\left(h;V^m_+\right)\Phi_{\pi_3}(h;\tau_{\R}),\quad h\in\G(\R).
\]
We now focus our attention to compute the following integral:
\[
I:=\int_{\R^{\x}\backslash\G(\R)}
\Phi(h)dh.
\]
Note that $\Phi(h)$ is right ${\rm SO}(2)$-invariant. Since the total volume of ${\rm SO}(2)$ is 1,
it follows that
\[
\int_{\R^{\x}\backslash\G(\R)}\Phi(h)dh
=
\int_{\R}\int_{\R_+}\left[\Phi\left(\begin{pmatrix}1&x\\0&1\end{pmatrix}
\begin{pmatrix}a&0\\0&1\end{pmatrix}\right)
+
\Phi\left(\begin{pmatrix}1&x\\0&1\end{pmatrix}\begin{pmatrix}-a&0\\0&1\end{pmatrix}\right)\right]
\,\frac{d^{\x}a}{a}dx, 
\]
by the Iwasawa decomposition.
Since $\Phi\left(\begin{pmatrix}1&x\\0&1\end{pmatrix}\begin{pmatrix}a&0\\0&1\end{pmatrix}\right)$ 
vanishes when $a\in\R_+$, we find that
\[
I
=
\int_{\R^{\x}\backslash\G(\R)}\Phi(h)dh
=
\int_{\R}\int_{\R_+}
\Phi\left(\begin{pmatrix}1&x\\0&1\end{pmatrix}\begin{pmatrix}-a&0\\0&1\end{pmatrix}\right)
\,\frac{d^{\x}a}{a}\,dx.
\]
By \corref{C:mc for real with V_+ and tau}, we have
$\Phi\left(\begin{pmatrix}1&x\\0&1\end{pmatrix}\begin{pmatrix}a&0\\0&1\end{pmatrix}\right)$ 
is equal to $2^{2k_3}\frac{\Gamma(k_2+m)^2}{\Gamma(k_2)}$ times
\begin{equation}\label{E:equation for Phi for real}
\bbI_{\R_-}(a)
\sum_{i,j=0}^m(-2)^{i+j}\begin{pmatrix}m\\i\end{pmatrix}\begin{pmatrix}m\\j\end{pmatrix}
\frac{\Gamma(k_2+i+j)}{\Gamma(k_2+i)\Gamma(k_2+j)}\cdot
\frac{(-a)^{k_3-m+i}}{[(1-a)-\sqrt{-1}\,x]^{k_3}[(1-a)+\sqrt{-1}\,x]^{k_3-2m+i+j}}.
\end{equation}
By \eqref{E:equation for Phi for real} we have
\[
I
=
2^{2k_3}\frac{\Gamma(k_2+m)^2}{\Gamma(k_2)}
\sum_{i,j=0}^m(-2)^{i+j}\begin{pmatrix}m\\i\end{pmatrix}\begin{pmatrix}m\\j\end{pmatrix}
\frac{\Gamma(k_2+i+j)}{\Gamma(k_2+i)\Gamma(k_2+j)}\cdot I_{i,j},
\]
where for $0\leq i,j\leq m$, 
\begin{align*}
I_{i,j}
&:=
\int_{\R}\int_{\R_+}\frac{a^{k_3-m+i-1}}{[(1+a)-\sqrt{-1}\,x]^{k_3}[(1+a)+\sqrt{-1}\,x]^{k_3-2m+i+j}}
\,d^{\x}a\,dx\\
&=
\left(\int_{\R_+}\frac{a^{k_3-m+i-1}}{(1+a)^{2k_3-2m+i+j-1}}\,d^{\x}a\right)
\left(\int_{\R}\frac{dx}{[1+\sqrt{-1}\,x]^{k_3-2m+i+j}[1-\sqrt{-1}\,x]^{k_3}}\right)\\
&=
2^{2-(2k_3-2m+i+j)}\pi
\frac{\Gamma(k_3-m+i-1)\Gamma(k_3-m+j)}{\Gamma(k_3-2m+i+j)\Gamma(k_3)}.
\end{align*}
The last equality follows from the following lemma.
\begin{lm}\label{L:integral formula 1}
For $|arg\,z|<\pi$, $0<Re(\beta)<Re(\alpha)$, we have
\[
\int_{\R_+}\frac{t^{\beta}}{(t+z)^{\alpha}}d^{\x}t
=
z^{\beta-\alpha}\cdot
\frac{\Gamma(\alpha-\beta)\Gamma(\beta)}{\Gamma(\alpha)}.
\]
For $Re(\alpha+\beta)>1$, we have
\[
\int_{\R}\frac{dx}{(1+\sqrt{-1}\,x)^{\alpha}(1-\sqrt{-1}\,x)^{\beta}}
=
2^{2-\alpha-\beta}\cdot\pi\cdot
\frac{\Gamma(\alpha+\beta-1)}{\Gamma(\alpha)\Gamma(\beta)}.
\]
\end{lm}
\begin{proof}
These are  \cite[Lemma 2.4 and 2.5]{Ikeda1998}
\end{proof}
Thus we obtain
\[
I
=
2^{2+2m}\pi\frac{\Gamma(k_2+m)^2}{\Gamma(k_2)\Gamma(k_3)}
\sum_{i,j=0}^m(-1)^{i+j}\begin{pmatrix}m\\i\end{pmatrix}\begin{pmatrix}m\\j\end{pmatrix}
\frac{\Gamma(k_2+i+j)}{\Gamma(k_2+i)\Gamma(k_2+j)}\cdot
\frac{\Gamma(k_3-m+i-1)\Gamma(k_3-m+j)}{\Gamma(k_3-2m+i+j)}.
\]
To simply the above expression of $I$, we need one more combinatorial identity from \cite[Lemma 3]{Orloff1987}.
\begin{lm}\label{L:combintorial identity 2}
Let $N\in\Z_{\geq 0}$ and $t,\alpha,\beta\in\C$. Then
\begin{align*}
\Gamma(\alpha+N)&\sum_{i=0}^{N}(-1)^i\begin{pmatrix}N\\i\end{pmatrix}
\frac{\Gamma(t+i)}{\Gamma(\alpha+i)}\cdot
\frac{\Gamma(t+\beta+\alpha+N-1+i)}{\Gamma(2t+\beta+i)}\\
&=(-1)^N\frac{\Gamma(t)\Gamma(t+\beta+\alpha+N-1)}{\Gamma(2t+\beta+N)}\cdot
\frac{\Gamma(t+\beta+N)}{\Gamma(t+\beta)}\cdot
\frac{\Gamma(t-\alpha+1)}{\Gamma(t-\alpha-N+1)}.
\end{align*}
\end{lm} 
Now we write
\[
I=
2^{2+2m}\pi\frac{\Gamma(k_2+m)}{\Gamma(k_1)\Gamma(k_2)}
\sum_{j=0}^m(-1)^j \begin{pmatrix}m\\j\end{pmatrix}\frac{\Gamma(k_3-m+j)}{\Gamma(k_2+j)}\cdot I', 
\]
where
\[
I'
=
\Gamma(k_2+m)\sum_{i=0}^m(-1)^i\begin{pmatrix}m\\i\end{pmatrix}
\frac{\Gamma(k_2+j+i)}{\Gamma(k_2+i)}\cdot
\frac{\Gamma(k_3-m-1+i)}{\Gamma(k_3-2m+j+i)}. 
\]
Applying \lmref{L:combintorial identity 2} to $I'$ with $t=k_2+j$, $\alpha=k_2$ and 
$\beta=k_3-2m-2k_2-j$, we find that
\[
I'
=
(-1)^m\cdot\frac{\Gamma(k_2+j)\Gamma(k_3-m-1)}{\Gamma(k_3-m+j)}\cdot
\frac{\Gamma(k_3-k_2-m)}{\Gamma(k_3-k_2-2m)}\cdot
\frac{\Gamma(j+1)}{\Gamma(j-m+1)}.
\]It follows that
\[
I
= 
(-1)^m2^{2+2m}\pi
\frac{\Gamma(k_3-m-1)\Gamma(k_2+m)\Gamma(k_1+m)}{\Gamma(k_3)\Gamma(k_2)\Gamma(k_1)}
\sum_{j=0}^m(-1)^j\begin{pmatrix}m\\j\end{pmatrix}\frac{\Gamma(1+j)}{\Gamma(1-m+j)}.
\]
Applying \lmref{L:combintorial identity 1}, we obtain
\[
I
=
2^{2+2m}\pi
\frac{\Gamma(k_3-m-1)\Gamma(k_2+m)\Gamma(k_1+m)\Gamma(m+1)}{\Gamma(k_1)\Gamma(k_2)\Gamma(k_3)}.
\]
Therefore we find that
\[
I\left(\itPi,\bft\right)
=
\left(\frac{1}{8\pi}\right)^{2m}I
=
2^{2-4m}\pi^{1-2m}
\frac{\Gamma(k_3-m-1)\Gamma(k_2+m)\Gamma(k_1+m)\Gamma(m+1)}{\Gamma(k_1)\Gamma(k_2)\Gamma(k_3)},
\]
and the proposition follows.
\end{proof}

We deduce a consequence from \propref{P:archimedean unbalanced case}. Let $m_1,m_2$ be two non-negative
integers such that $m_1+m_2=m$. Put
\[
\bft_{m_1,m_2}
=
\left(\t{V}^{m_1}_+\ot\t{V}^{m_2}_+\ot 1, (1,1,\tau_{\R})\right)
\in
\mathfrak{U}_{E}\x {\rm O}(2,E).
\]
Then our original element $\bft$ is $\bft_{0,m}$. Put
\[
I^*\left(\itPi;\bft_{m_1,m_2}\right)
=
\frac{L(1,\itPi,{\rm Ad})}{\zeta_{\R}(2)^2 L(1/2,\itPi,r)}\cdot 
I\left(\itPi,\bft_{m_1,m_2}\right),
\]
where
\[
I\left(\itPi,\bft_{m_1,m_2}\right)
=
\int_{\R^{\x}\backslash\G(\R)}
\Phi_{\pi_1}\left(h;\t{V}^{m_1}_+\right)\Phi_{\pi_2}\left(h;\t{V}^{m_2}_+\right)
\Phi_{\pi_3}(h;\tau_{\R})dh
\]
Then $I^*\left(\itPi,\bft\right)$ in \defref{D: definition of zeta integral} 
is nothing but $I^*\left(\itPi,\bft_{0,m}\right)$.
\begin{cor}
Notation is as above. We have
\[
I^*\left(\itPi,\bft_{m_1,m_2}\right)=I^*\left(\itPi,\bft\right)
\]
for every non-negative integers $m_1,m_2$ such that $m_1+m_2=m$.
\end{cor}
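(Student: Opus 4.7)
The plan is to reduce the identity to a short sign recursion using Prasad's multiplicity one, instead of generalizing the quadruple sum of the proof of \propref{P:archimedean unbalanced case}. All factors in $I^*$ beyond the period itself---the ratio $\zeta_F(2)/\zeta_E(2)$, the $L$-values $L(1,\itPi',\Ad)$ and $L(1/2,\itPi',r)$, and the denominator $\cB_\itPi(\itPi(\cJ)\phi_\itPi,\phi_\itPi)$ in Definition~2.1---are manifestly $\bft$-independent, so it suffices to show that
\[
\int_{\R^\x\backslash\G(\R)} \prod_{j=1}^{3}\cB_{\pi_j}\bigl(\pi_j(h)\alpha_j^{(m_1,m_2)},\,\alpha_j^{(m_1,m_2)}\bigr)\,dh
\]
is independent of the splitting $m_1+m_2=m$, where $\alpha_1^{(m_1,m_2)}=V_+^{m_1}\phi_{\pi_1}$, $\alpha_2^{(m_1,m_2)}=V_+^{m_2}\phi_{\pi_2}$, $\alpha_3^{(m_1,m_2)}=\pi_3(\tau_\R)\phi_{\pi_3}$. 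Here we use that the scalar $(-1/8\pi)^{m_1+m_2}=(-1/8\pi)^{m}$ coming from the definition of $\wtd V_+$ is splitting-independent, and that $\cB_\itPi$ factors as $\cB_{\pi_1}\otimes\cB_{\pi_2}\otimes\cB_{\pi_3}$.

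By Prasad's theorem \cite[Theorem 9.5]{Prasad1990}, in the unbalanced case $\dim_\C\Hom_{\G(\R)}(\pi_1\otimes\pi_2\otimes\pi_3,\C)=1$; fix a non-zero generator $\ell$. The multilinear form
\[
T(\beta_1,\beta_2,\beta_3;\beta_1',\beta_2',\beta_3'):=\int_{\R^\x\backslash\G(\R)}\prod_{j=1}^{3}\cB_{\pi_j}\bigl(\pi_j(h)\beta_j,\beta_j'\bigr)\,dh
\]
is invariant under the diagonal $\G(\R)$-action on the unprimed variables (by the substitution $h\mapsto hg^{-1}$) and on the primed variables (by $h\mapsto gh$, using the invariance of each $\cB_{\pi_j}$), and therefore lies in the one-dimensional space $\Hom_{\G(\R)}(\pi_1\otimes\pi_2\otimes\pi_3,\C)^{\otimes 2}$. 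Hence $T=C\cdot\ell\otimes\ell$ for an absolute constant $C$, and setting $a_{m_1,m_2}:=\ell\bigl(V_+^{m_1}\phi_{\pi_1}\otimes V_+^{m_2}\phi_{\pi_2}\otimes\pi_3(\tau_\R)\phi_{\pi_3}\bigr)$, the problem reduces to proving $a_{m_1,m_2}^2$ is splitting-independent.

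Finally, I would differentiate the $\G(\R)$-invariance of $\ell$ with respect to $V_+\in\mathfrak{g}_\C$ applied to the triple $(V_+^{m_1}\phi_{\pi_1},V_+^{m_2-1}\phi_{\pi_2},\pi_3(\tau_\R)\phi_{\pi_3})$, obtaining
\[
a_{m_1+1,m_2-1}+a_{m_1,m_2}+\ell\bigl(V_+^{m_1}\phi_{\pi_1}\otimes V_+^{m_2-1}\phi_{\pi_2}\otimes \pi_3(V_+)\pi_3(\tau_\R)\phi_{\pi_3}\bigr)=0.
\]
The crux---and the only real conceptual content of the argument---is the vanishing of the third term: a direct matrix computation gives $\tau_\R V_+\tau_\R^{-1}=V_-$ and $\tau_\R^2=1$, so $\pi_3(V_+)\pi_3(\tau_\R)\phi_{\pi_3}=\pi_3(\tau_\R)\pi_3(V_-)\phi_{\pi_3}=0$, the last equality holding because the unbalanced hypothesis forces $k_3\geq k_1+k_2\geq 2$, making $\pi_3$ a genuine holomorphic discrete series whose lowest weight vector $\phi_{\pi_3}$ is annihilated by $V_-$. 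Thus $a_{m_1+1,m_2-1}=-a_{m_1,m_2}$, which iterates to $a_{m_1,m_2}=(-1)^{m_1}a_{0,m}$; squaring gives the desired equality.
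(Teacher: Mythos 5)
Your argument is correct, and it takes a genuinely different route from the paper. The paper also begins with Prasad's archimedean multiplicity one, but it then identifies the unique trilinear form with the explicit local Rankin--Selberg integral $\Psi$ (realizing $\pi_2$ in an induced model), computes $\Psi\bigl(W_{\pi_1}^{m_1}\ot f_{\pi_2}^{m_2}\ot\rho(\tau_\R)W_{\pi_3}\bigr)$ in closed form via the Whittaker formulas of \lmref{L:whittaker function for weight k+2m} and the combinatorial identity \lmref{L:combintorial identity 1}, and checks that the resulting ratio (including the raising constants $c(\pi_2,\cdot,\cdot)$) collapses to $1$; this needs the nonvanishing $I^*(\itPi,\bft)\neq 0$ from \propref{P:archimedean unbalanced case} and a fair amount of constant bookkeeping. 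You instead stay entirely at the level of the abstract invariant functional: writing the matrix-coefficient integral as $C\,\ell\ot\ell'$ and differentiating the invariance of $\ell$ along $V_+$, with the two inputs ${\rm Ad}(\tau_\R)V_+=V_-$ and $V_-\phi_{\pi_3}=0$ (valid since $k_3\geq k_1+k_2\geq 2$, and likewise if $\phi_{\pi_3}$ were a minimal-weight vector of a limit of discrete series), you get the two-term recursion $a_{m_1+1,m_2-1}=-a_{m_1,m_2}$, and the same recursion on the dual-side factor $\ell'$ makes the signs cancel in the product, so no squaring subtlety arises. Your route requires no archimedean integral computation at all, does not need $I^*(\itPi,\bft)\neq 0$, and sidesteps the delicate normalization of the raising operator on the induced model (which is exactly where the paper's displayed constants are easiest to get wrong); what it does not give, and what the paper's computation does, is the explicit value of the Rankin--Selberg period itself, which is of some independent interest. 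The only points you leave implicit---absolute convergence of the matrix-coefficient form $T$ on all $K$-finite vectors (which follows from $\Lambda(\itPi')<1/2$ exactly as in Ichino's estimate) and the standard step from group invariance of $\ell$ to invariance under the complexified Lie algebra---are routine and are treated no more carefully in the paper's own proof.
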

\begin{proof}
This is in fact an easy consequence form the multiplicity one result of local trilinear forms,
\propref{P:archimedean unbalanced case} together with the local Rankin-Selberg integral.
More precisely, let $\mu_2=|\cdot|_\R^{(k_2-1)/2}$ and $\nu_2=|\cdot|_\R^{(1-k_2)/2}{\rm sgn}^{k_2}$
be two characters of $\R^{\x}$. Then $\pi_2$ can be realized as the unique irreducible 
subrepresentation of ${\rm Ind}_{B(\R)}^{\G(\R)}(\mu_2\bt\nu_2)$ which we denote by 
${\rm Ind}_{B(\R)}^{\G(\R)}(\mu_2\bt\nu_2)_0$. For every non-negative integer $n$, we let
$f^{n}_{\pi_2}\in{\rm Ind}_{B(\R)}^{\G(\R)}(\mu_2\bt\nu_2)_0$ be the element characterized
by requiring
\[
f^{n}_{\pi_2}
\left(\begin{pmatrix}
{\rm cos}\theta&{\rm sin}\theta\\-{\rm sin}\theta&{\rm cos}\theta
\end{pmatrix}
\right)
=
e^{i(k_2+2n)\theta}.
\]
We have the following relation, which can be found in \cite[Lemma 5.6 (iii)]{JLbook}
\[
\rho\left(\t{V}_+\right)f_{\pi_2}^{n}
=
2(k_2+n)f^{n+1}_{\pi_2}.
\]
Inductively we find that 
\[
\rho\left(\t{V}^{\ell}_+\right)f_{\pi_2}^{n}
=
c(\pi_2,n,\ell)f^{n+\ell}_{\pi_2},
\]
where
\begin{equation}\label{E:raising constant}
c(\pi_2,n,\ell)
=
2^{\ell}\frac{\Gamma(k_2+n+\ell)}{\Gamma(k_2+n)},
\end{equation}
for every $\ell\geq 0$.

Let $\Psi:\cW(\pi_1,\psi)\bt{\rm Ind}_{B(\R)}^{\G(\R)}(\mu_2\bt\nu_2)_0\bt\cW(\pi_3,\psi)\to\C$
be the local Rankin-Selberg integral defined by 
\[
\Psi(W_1\ot f_2\ot W_3)
=
\int_{\R^{\x}N(\R)\backslash\G(\R)}W_1(\tau_{\R}g)W_3(g)f_2(g)dg,
\] 
for $W_1\in\cW(\pi_1,\psi)$, $f_2\in{\rm Ind}_{B(\R)}^{\G(\R)}(\mu_2\bt\nu_2)_0$ and
$W_3\in\cW(\pi_3,\psi)$. Here
\[
N
=
\stt{\pMX 1*01\in\G}.
\]
One check easily that this integral converges absolutely and
certainly it defines a $\G(\R)$-invariant trilinear form. From the multiplicity one result of
such trilinear form and the fact that $I^*(\itPi,\bft)\neq 0$, one can deduce that following
equality easily
\begin{equation}\label{E:ichino integral and rankin-selberg integral}
\frac{I^*\left(\itPi,\bft_{m_1,m_2}\right)}{I^*\left(\itPi,\bft\right)}
=
\left(\frac{c(\pi_2,m_1,m_2)}{c(\pi_2,0,m)}\right)^2
\left(
\frac{\Psi\left(W_{\pi_1}^{m_1}\ot f_{\pi_2}^{m_2}\ot\rho(\tau_{\R})W_{\pi_3}\right)}
{\Psi\left(W_{\pi_1}\ot f_{\pi_2}^{m}\ot\rho(\tau_{\R})W_{\pi_3}\right)}
\right)^2.
\end{equation} 
Recall that $W_{\pi_1}^n=\rho\left(\t{V}^n_+\right)W_{\pi_1}$ for every $n\geq 0$. 
Our task now is to compute the ratio of these two Rankin-Selberg integrals.
Since we can let $m_1,m_2$ vary, it suffices to compute the numerator.
Applying \lmref{L:whittaker function for weight k+2m} and \lmref{L:combintorial identity 1}, we find that the numerator is
\begin{align*}
&\Psi\left(W_{\pi_1}^{m_1}\ot f_{\pi_2}^{m_2}\ot\rho(\tau_{\R})W_{\pi_3}\right)\\
&=
\int_{{\rm SO}(2)}\int_{\R^{\x}}
W^{m_1}_{\pi_1}\left(\begin{pmatrix}-a&0\\0&1\end{pmatrix}k\right)
W_{\pi_3}\left(\begin{pmatrix}-a&0\\0&1\end{pmatrix}k\right)
f^{m_2}_{\pi_2}\left(\begin{pmatrix}a&0\\0&1\end{pmatrix}k\right)
\frac{d^{\x}a}{|a|_\R}dk\\
&=
\int_{\R^{\x}}
W^{m_1}_{\pi_1}\left(\begin{pmatrix}-a&0\\0&1\end{pmatrix}\right)
W_{\pi_3}\left(\begin{pmatrix}-a&0\\0&1\end{pmatrix}\right)|a|_\R^{\frac{k_2}{2}-1}d^{\x}a\\
&=
2^{k_1+k_3+m_1}\sum_{j=0}^{m_1}(-4\pi)^j\frac{\Gamma(k_1+m_1)}{\Gamma(k_1+j)}
\begin{pmatrix}m_1\\j\end{pmatrix}\int_0^\infty
a^{\frac{k_1+k_2+k_3}{2}+j-1}e^{-4\pi a}d^{\x}a\\
&= 
2^{k_1+k_3+m_1}(4\pi)^{1-\frac{k_1+k_2+k_3}{2}}\Gamma(k_1+m_1)
\sum_{j=0}^{m_1}(-1)^j\begin{pmatrix}m_1\\j\end{pmatrix}
\frac{\Gamma\left(\frac{k_1+k_2+k_3}{2}+j-1\right)}{\Gamma(k_1+j)}\\
&=
(-1)^{m_1}2^{k_1+k_3+m_1}(4\pi)^{1-\frac{k_1+k_2+k_3}{2}}
\frac{\Gamma\left(\frac{k_1+k_2+k_3}{2}-1\right)\Gamma(k_2+m_1+m_2)}{\Gamma(k_2+m_2)}.
\end{align*}
By letting $m_1=0$ and $m_2=m$, we obtain the value of denominator. Combining with equation
\eqref{E:raising constant}, we find that the right hand side of the equation 
\eqref{E:ichino integral and rankin-selberg integral} is equal to $1$. The corollary 
follows.
\end{proof}

\subsection{The non-archimedean case}
Let $F$ be a non-archimedean local field.  
Write $\varpi=\varpi_F$ and $q=q_F$ for simplicity. Recall that we have assumed  
\begin{equation}\label{E:nonzero hom}
{\rm Hom}_{\G(F)}(\itPi,\C)\neq\stt{0}.
\end{equation}
According to the results of Prasad \cite{Prasad1990} and \cite{Prasad1992} and our assumption on
$\itPi$, \eqref{E:nonzero hom} holds for following cases.
(i) Suppose $E=F\x F\x F$ so that $\itPi=\pi_1\bt\pi_2\bt\pi_3$. Then (i-a) one of 
$\pi_1, \pi_2, \pi_3$ is spherical; (i-b) $\pi_j={\rm St}_F\ot\chi_j$ are unramified
special representations for $j=1,2,3$ with $\chi_1\chi_2\chi_3(\varpi)=-1$.
(ii) Suppose $E=K\x F$ so that $\itPi=\pi'\bt\pi$. Then (ii-a) $\pi$ is spherical; (ii-b) 
$\pi'$ is spherical, $\pi={\rm St}_F\ot\chi$ is a unramified special 
representation, $K/F$ is ramified and $\chi(\varpi)=-1$; 
(ii-c) $\pi'={\rm St}_K\ot\chi'$, $\pi={\rm St}_F\ot\chi$ are unramified special representations, 
$K/F$ is ramified or $K/F$ is unramified and $\chi'\chi(\varpi)=1$. (iii) Suppose $E$ is a field.
Then (iii-a) $\itPi$ is spherical; (iii-b) $\itPi={\rm St}_E\ot\chi$ is a unramified special
representation with $\chi(\varpi)=-1$.

We say that $E$ is unramified over $F$ if either $E=F\x F\x F$,
or $E=\cF\x F$, where $\cF$ is the unramified extension over $F$, or $E$ is the unramified 
cubic extension over $F$. The evaluation of $I^*(\itPi,\bft)$ has been carried out in the following cases.

\begin{prop}\label{P:known results}
\noindent 
\begin{itemize}
\item[(1)]
Suppose $E$ is unramified over $F$ and $\itPi$ is spherical. Then we have
\[
I^*\left(\itPi,\bft\right)=1.
\]
\item[(2)]
Suppose $E=F\x F\x F$ and $\pi_j={\rm St}_F\ot \chi_j$, where $\chi_j$ are unramified quadratic
characters of $F^{\x}$ for $j=1,2,3$. Then we have
\[
I^*\left(\itPi,\bft\right)
=
2q^{-1}(1+q^{-1}).
\]
\item[(3)]
Suppose $E=F\x F\x F$ and one of $\pi_j\,(j=1,2,3)$ is spherical and the other two are unramified
special. Then we have
\[
I^*\left(\itPi,\bft\right)
=
q^{-1}.
\]
\end{itemize}
Here $I^*\left(\itPi,\bft\right)$ is defined in \subsecref{SS:local period integral}.
\end{prop}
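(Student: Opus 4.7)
All three assertions are proved by a direct evaluation of $I(\itPi,\bft)$ followed by comparison with the $L$-factor ratio and $\zeta$-factor constant in the normalization of $I^*(\itPi,\bft)$. In every case the new vector $\phi_\itPi$ is fixed under a compact open subgroup $K'\subset \GL_2(\cO_F)$: namely $K'=\GL_2(\cO_F)$ in part (1), and $K'=\cU_0(\varpi)$ in parts (2) and (3). Consequently the integrand on $F^\times\backslash \GL_2(F)$ is bi-$K'$-invariant, and an appropriate Cartan (or Bruhat) decomposition of $\GL_2(F)$ relative to $K'$ turns $I(\itPi,\bft)$ into a discrete sum of double-coset contributions indexed by $n\in\Z_{\ge 0}$ or $n\in \Z$, with volume weights that are elementary functions of $q=q_F$.

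\emph{Part (1).} Each matrix coefficient in the integrand (or its restriction to $\GL_2(F)$ in the subcases $E=\cF\times F$ with $\cF/F$ unramified and $E$ an unramified cubic extension) is bi-$\GL_2(\cO_F)$-invariant. I would apply the Cartan decomposition
\[
\GL_2(F) \;=\; \bigsqcup_{n\ge 0} F^\times\, K\,\diag(\varpi^n,1)\, K
\]
and insert Macdonald's formula (\lmref{L:macdonald formula}) for each $\Phi_{\pi_j}(\diag(\varpi^n,1))$. The resulting sum is a finite linear combination of geometric series in the Satake parameters $\alpha_j^{\pm 1}$ (with the obvious modifications in the $\cF\times F$ and cubic subcases), whose closed form equals $\frac{\zeta_E(2)}{\zeta_F(2)}\cdot\frac{L(1/2,\itPi',r)}{L(1,\itPi',\Ad)}$; after the normalization one obtains $I^*(\itPi,\bft)=1$. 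This is the standard unramified triple-product calculation, in the spirit of Ichino's original computation.

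\emph{Parts (2) and (3).} Here the new vectors of the Steinberg-twisted components are $\cU_0(\varpi)$-invariant but not $\GL_2(\cO_F)$-invariant, so I would refine the Iwasawa decomposition using $K=\cU_0(\varpi)\sqcup \cU_0(\varpi)w\cU_0(\varpi)$ (with $w$ the nontrivial Weyl element) to split $F^\times\backslash \GL_2(F)$ into two families of double cosets parametrized by $n\in\Z$. Each Steinberg factor is then evaluated by \lmref{L:mc for unramified special}, and in part (3) the remaining spherical factor by \lmref{L:macdonald formula}. In part (2) the sign constraint $\chi_1\chi_2\chi_3(\varpi)=-1$ is exactly what makes the resulting geometric sums collapse to a rational function in $q^{-1}$; in part (3) the spherical factor provides the convergence. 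Matching the closed form against the explicit Euler factors of $L(1/2,\itPi,r)$, $L(1,\itPi,\Ad)$, $\zeta_F(2)$ and $\zeta_E(2)=\zeta_F(2)^3$ produces the stated values $2q^{-1}(1+q^{-1})$ and $q^{-1}$ respectively.

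\emph{Main obstacle.} The principal technical difficulty is the combinatorial bookkeeping in part (2): the three Steinberg matrix coefficients take distinct values on each Bruhat cell via \lmref{L:mc for unramified special}, the triple-product $L$-factor $L(1/2,\itPi,r)$ splits into four Euler factors involving the products $\chi_{i_1}\chi_{i_2}\chi_{i_3}(\varpi)\in\{\pm 1\}$, and the adjoint $L$-factor $L(1,\itPi,\Ad)$ is a product of three Steinberg adjoint factors $(1-q^{-1})^{-1}$, so every sign and power of $q$ must line up across two decoupled geometric-series contributions. Once this combinatorial matching has been carried out in part (2), parts (1) and (3) follow from essentially the same framework with strictly simpler Euler factors.
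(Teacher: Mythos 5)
Your plan is correct in outline, but it takes a genuinely different route from the paper: the paper does not compute anything here at all, it simply quotes the literature --- part (1) is \cite[Lemma 2.2]{Ichino2008} (the general unramified triple-product computation), part (2) is done in \cite[Section 7]{IchinoIkeda2010}, and part (3) is \cite[Lemma 4.4]{Nelson2011}; this is why the proposition is labelled ``known results''. What your direct approach buys is uniformity and self-containedness: the Cartan-decomposition sum with \lmref{L:macdonald formula} in part (1), and the Iwahori integration formula together with \lmref{L:mc for unramified special} (plus \lmref{L:macdonald formula}) in parts (2) and (3), are exactly the tools the paper itself deploys for the \emph{new} cases in \S\ref{S:local zeta integral: matrix algebra}, so your computation would make \propref{P:known results} independent of outside references --- at the cost of redoing the full unramified computation of part (1), which is substantially longer than the sketch suggests (it is the classical Ikeda/Piatetski-Shapiro--Rallis-type evaluation, including the base-change subcases $E=\cF\x F$ and $E$ a cubic field, where Macdonald's formula must be used over the extension with $q_E=q^2$ or $q^3$). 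Two details in your bookkeeping for part (2) must be corrected before the constants match: for three unramified special representations the local factor $L(s,\itPi,r)$ has \emph{three} Euler factors, all attached to the single character $\chi_1\chi_2\chi_3$ (namely $(1-\chi_1\chi_2\chi_3(\varpi)q^{-s-3/2})^{-1}(1-\chi_1\chi_2\chi_3(\varpi)q^{-s-1/2})^{-2}$), not four factors indexed by sign patterns --- four factors occur in the ``one special, two spherical'' configuration of \propref{P:(1,1,1)one unramified special,two spherical}; and the Steinberg adjoint factor is $\zeta_F(s+1)$, so each component contributes $(1-q^{-2})^{-1}$ at $s=1$, not $(1-q^{-1})^{-1}$. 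With those Euler factors (and $\zeta_E(2)=\zeta_F(2)^3$), the two Bruhat-cell sums $(1+q)^{-1}\bigl(\sum_n(-1)^n q^{-2|n|}-\sum_n(-1)^n q^{-2|n-1|}\bigr)$ do yield $I^*(\itPi,\bft)=2q^{-1}(1+q^{-1})$, and the analogous computation gives $q^{-1}$ in part (3); also note that in parts (2) and (3) one has $\bft=1$, and the sign condition $\chi_1\chi_2\chi_3(\varpi)=-1$ is not what makes the series converge but what prevents the two cell contributions from cancelling to zero, consistent with Prasad's criterion.
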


\begin{proof}
Part $(1)$ is \cite[Lemma 2.2]{Ichino2008}, $(2)$ is in \cite[Section 7]{IchinoIkeda2010} and
$(3)$ is a result of \cite[Lemma 4.4]{Nelson2011}.
\end{proof}

We proceed to compute $I^*(\itPi,\bft)$ in the remaining cases. For $\Phi\in L^1(F^{\x}\backslash\G(F))$ such that $\Phi(khk')=\Phi(h)$ for every $h\in\G(F)$
and $k,k'\in K_0(\varpi)$, where
\[
K_0(\varpi)
=
\stt{
\begin{pmatrix}
a&b\\c&d
\end{pmatrix}
\in\G(\cO_F)
\mid c\in\varpi\cO_F,
}
\]
we have the integration formula  
\begin{equation}\label{E:integral formula for iwahori}
\int_{F^{\x}\backslash\G(F)}\Phi(h)dh
=
(1+q)^{-1}
\left\lbrace 
\sum_{n\in\Z}
\Phi \left(
\begin{pmatrix}
\varpi^n&0\\0&1
\end{pmatrix}
\right )q^{|n|}
+
\sum_{n\in\Z}
\Phi \left(
\pMX 0110
\begin{pmatrix}
\varpi^n&0\\0&1
\end{pmatrix}
\right )q^{|n-1|}
\right\rbrace
\end{equation}    
(\cf \cite[Section 7]{IchinoIkeda2010}).

\begin{prop}\label{P:(1,1,1)one unramified special,two spherical}
Let $E=F\x F\x F$. Suppose one of $\pi_j$ is unramified special and the other two are 
spherical. Then we have
\[
I^*\left(\itPi,\bft\right)
=
q^{-1}(1+q^{-1})^{-1}.
\]
\end{prop}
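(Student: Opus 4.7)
The plan is to follow the same strategy used in \propref{P:known results}(2)-(3): apply the integration formula \eqref{E:integral formula for iwahori}, evaluate the matrix coefficients via \lmref{L:macdonald formula} (Macdonald's formula) and \lmref{L:mc for unramified special}, sum the resulting series, and then multiply by the $L$-factor ratio in \defref{D: definition of zeta integral}.

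Without loss of generality assume $\pi_1={\rm St}_F\otimes\chi_1$ is the unramified special component, so that $\bft=(1,\bft_2,1)$ with $\bft_2=\pMX{\varpi^{-1}}{0}{0}{1}$. First I would check that the integrand $h\mapsto\Phi_{\pi_1}(h)\Phi_{\pi_2}(h;\bft_2)\Phi_{\pi_3}(h)$ is bi-$K_0(\varpi)$-invariant, so that \eqref{E:integral formula for iwahori} applies. For $\Phi_{\pi_1}$ this follows since the new vector of the unramified special representation is $K_0(\varpi)$-fixed, and for $\Phi_{\pi_3}$ from bi-$K$-invariance (sphericity). For $\Phi_{\pi_2}(h;\bft_2)$, a direct computation gives $\bft_2^{-1}K_0(\varpi)\bft_2\subset\GL_2(\cO_F)$, which combined with sphericity of $\pi_2$ yields bi-$K_0(\varpi)$-invariance.

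Next, using $\GL_2(F)$-invariance of the pairing one rewrites $\Phi_{\pi_2}(h;\bft_2)=\Phi_{\pi_2}(\bft_2^{-1}h\bft_2)$. For $h=\diag{\varpi^n,1}$ the conjugation is trivial, while for $h=w\diag{\varpi^n,1}$ (with $w=\pMX{0}{1}{1}{0}$) one computes
\[
\bft_2^{-1}h\bft_2=\pMX{0}{\varpi}{\varpi^{n-1}}{0}=\varpi\cdot w\cdot \diag{\varpi^{n-2},1}.
\]
Using triviality of the central character of $\pi_2$, $w\in K$, and sphericity of $\pi_2$, this gives $\Phi_{\pi_2}(w\diag{\varpi^n,1};\bft_2)=M_{\pi_2}(n-2)$, where $M_\pi(n):=\Phi_\pi(\diag{\varpi^n,1})$. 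Substituting into \eqref{E:integral formula for iwahori} together with \lmref{L:mc for unramified special} yields
\[
I(\itPi,\bft)=\frac{1}{1+q}\sum_{n\in\Z}\chi_1(\varpi)^n\,M_{\pi_3}(n)\bigl[M_{\pi_2}(n)-M_{\pi_2}(n-2)\bigr].
\]

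It remains to evaluate this sum. Macdonald's formula writes $M_{\pi_j}(n)$ as a combination of $\alpha_j^{\pm|n|}q^{-|n|/2}$; splitting according to the signs of $n$ and $n-2$ (the $n=1$ contribution vanishes since $M_{\pi_2}$ is even in $n$), the sum reduces to a finite combination of geometric series in $q^{-1}$ and the Satake parameters $\alpha_2^{\pm 1},\alpha_3^{\pm 1}$. The differences $M_{\pi_2}(n)-M_{\pi_2}(n-2)$ conveniently produce factors of the shape $(1-\alpha_2^{\pm 2}q^{-1})$ that cancel the denominators in Macdonald's formula. After summing and collecting over a common denominator the result matches $\frac{\zeta_E(2)}{\zeta_F(2)}\cdot\frac{L(1/2,\itPi,r)}{L(1,\itPi,{\rm Ad})}\cdot q^{-1}(1+q^{-1})^{-1}$, and multiplication by the normalizing factor in the definition of $I^*(\itPi,\bft)$ gives the asserted value.

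The main obstacle is the combinatorial bookkeeping of the four geometric subseries (indexed by the sign choices for $\alpha_2^{\pm 1},\alpha_3^{\pm 1}$) and the identification of the resulting rational function with the degree-$4$ Euler factor $L(1/2,\itPi,r)$ produced by the Steinberg component $\pi_1$. A useful sanity check is consistency with \propref{P:known results}(3): the answer there, $q^{-1}$, differs from ours by a factor of $(1+q^{-1})$, matching the replacement of one Steinberg by one spherical component in the relevant adjoint $L$-factor.
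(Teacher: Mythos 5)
Your proposal follows essentially the same route as the paper's proof: reduce $I(\itPi,\bft)$ via the integration formula \eqref{E:integral formula for iwahori} to a sum over $\diag{\varpi^n,1}$ and $\pMX 0110\diag{\varpi^n,1}$, evaluate the three factors with \lmref{L:macdonald formula} and \lmref{L:mc for unramified special} (your bi-$K_0(\varpi)$-invariance check and the conjugation identity $\Phi_{\pi_2}(h;\bft_2)=\Phi_{\pi_2}(\bft_2^{-1}h\bft_2)$ make explicit what the paper leaves implicit), and then apply the normalization by the adjoint and degree-four $L$-factors. The series evaluation you sketch does collapse to the rational function displayed in the paper's proof, so the plan is correct and matches the paper's computation.
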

\begin{proof}In this case, the $L$-factor is given by
\begin{align*}
L(s,\Pi,r)
&=
(1-\chi(\varpi)\alpha\beta q^{-s-1/2})^{-1}
(1-\chi(\varpi)\alpha\beta^{-1}q^{-s-1/2})^{-1}\\
&\x(1-\chi(\varpi)\alpha^{-1}\beta q^{-s-1/2})^{-1}
(1-\chi(\varpi)\alpha^{-1}\beta^{-1}q^{-s-1/2})^{-1}.
\end{align*}
We continue to compute $I(\itPi,\bft)$. Assume $\pi_1={\rm St}_F\ot\chi$ for some unramified quadratic character $\chi$ of $F^{\x}$, 
and
\[
\pi_j
=
{\rm Ind}_{B(F)}^{\G(F)}\left(|\cdot|^{\lambda_j}_F\bt|\cdot|_F^{-\lambda_j}\right)
\]
for $j=2,3$. Let $\alpha=|\varpi|^{\lambda_2}_F$ and $\beta=|\varpi|^{\lambda_3}_F$. Then we have
\[
I\left(\itPi,\bft\right)
=
\int_{F^{\x}\backslash\G(F)}
\Phi(h)dh,
\]
where
\[
\Phi(h)
=
\Phi_{\pi_1}(h)\Phi_{\pi_2}(h)
\Phi_{\pi_3}
\left( 
h;\begin{pmatrix}\varpi^{-1}&0\\0&1\end{pmatrix}
\right),\quad h\in\G(F).
\]
By \eqref{E:integral formula for iwahori}, \lmref{L:macdonald formula} and
\lmref{L:mc for unramified special}, we find that
\begin{align*}
&I(\itPi,\bft)\\
&=
(1+q)^{-1}
\left\lbrace 
\sum_{n=-\infty}^\infty
\Phi
\left( 
\begin{pmatrix}\varpi^n&0\\0&1\end{pmatrix}
\right)q^{|n|}
+
\sum_{n=-\infty}^\infty
\Phi
\left( 
\begin{pmatrix}0&1\\1&0\end{pmatrix}\begin{pmatrix}\varpi^n&0\\0&1\end{pmatrix}
\right)q^{|n-1|}
\right\rbrace\\
&=
(1+q)^{-1}\cdot\frac{(1-q^{-1})}{(1+q^{-1})}\cdot
\frac{(1-\alpha^2 q^{-1})(1-\alpha^{-2}q^{-1})(1-\beta^2 q^{-1})(1-\beta^{-2}q^{-1})}
     {(1-\chi(\varpi)\alpha\beta q^{-1})(1-\chi(\varpi)\alpha\beta^{-1}q^{-1})
      (1-\chi(\varpi)\alpha^{-1}\beta q^{-1})(1-\chi(\varpi)\alpha^{-1}\beta^{-1}q^{-1})}.
\end{align*}

This completes the proof.
\end{proof}

\begin{prop}\label{P:(2,1) with 1 spherical}
Let $E=K\x F$ and $\pi$ be a spherical representation of $\G(F)$.
\begin{itemize}
\item[(1)]
If $K$ is ramified over $F$ and $\pi'$ is spherical, then we have
\[
I^*\left(\itPi,\bft\right)
=
1.
\] 
\item[(2)]
If $\pi'$ is unramified special, then we have
\[
I^*\left(\itPi,\bft\right)
=
\begin{cases}
q^{-1}(1+q^{-1})^{-2}(1+q^{-2})\quad&\text{if $\cF$ is unramified over $F$},\\
q^{-1}(1+q^{-1})^{-1}\quad&\text{if $\cF$ is ramified over $F$}.
\end{cases}
\]
\end{itemize}
\end{prop}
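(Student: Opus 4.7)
The plan is to parallel the proof of \propref{P:(1,1,1)one unramified special,two spherical}, adapting it to handle the fact that $\pi'$ is a representation of $\GL_2(\cF)$ while the integration domain is $F^\x\backslash\GL_2(F)$. First I would write out the $L$-factor $L(s,\itPi',r)$ and the adjoint $L$-factor explicitly in terms of the Satake parameters of $\pi'$ and $\pi$ (and the character $\chi'$ in Part (2)), so that the right-hand side of the desired identity is made concrete and I know the target expression.

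For Part (1), where $\bft=1$ and both $\pi'$ and $\pi$ are spherical, the integrand $\Phi_{\pi'}(h)\Phi_\pi(h)$ is $\GL_2(\cO_F)$-bi-invariant, so \eqref{E:integral formula for iwahori} applies. Using the ramification $\varpi_F = u\varpi_{\cF}^{2}$ for some unit $u$, one has $\pDII{\varpi_F^n}{1} \in \pDII{\varpi_{\cF}^{2n}}{1}\GL_2(\cO_{\cF})$, and I would evaluate $\Phi_{\pi'}$ via \lmref{L:macdonald formula} (with Satake parameter $\alpha' = q_{\cF}^{-\lambda'}$ and $q_{\cF}=q$) and $\Phi_\pi$ similarly. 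The resulting two geometric sums in $q^{-1}$ collapse into the expected ratio $L(1/2,\itPi',r)/L(1,\itPi',\mathrm{Ad})$, giving $I^*(\itPi,\bft)=1$.

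For Part (2), with $\pi'={\rm St}_{\cF}\ot\chi'$ unramified special and $\pi$ spherical, the first step is to verify that the vector $\itPi(\bft)\phi_\itPi$ produces an integrand right-$K_0(\varpi_F)$-invariant, which justifies the use of \eqref{E:integral formula for iwahori}. When $\cF/F$ is unramified, $\bft=1$, $\varpi_{\cF}=\varpi_F$ and $q_{\cF}=q^2$, so I would apply \lmref{L:mc for unramified special} (for $\pi'$, viewed on $\GL_2(F)\hookrightarrow\GL_2(\cF)$) together with \lmref{L:macdonald formula} (for $\pi$) and sum the two resulting geometric series in the variable $q^{-1}$. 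When $\cF/F$ is ramified, $\bft=(\pDII{\varpi_F}{1},1)$ shifts the argument of $\Phi_{\pi'}$ by $\pDII{\varpi_{\cF}^{2}}{1}$ up to $\GL_2(\cO_{\cF})$, so the same lemmas apply with an index shift, and once again the two sums in \eqref{E:integral formula for iwahori} assemble into the desired ratio.

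The main obstacle is the bookkeeping needed to keep track of the different residue cardinalities ($q_{\cF}$ versus $q_F$), the ramification index, and the correct Iwahori level in the two sub-cases of Part (2), together with checking that the twist by $\bft$ really preserves the bi-invariance under the relevant compact open subgroup so that \eqref{E:integral formula for iwahori} is legitimately applicable. Once this is in place the computation reduces to summing elementary geometric series and matching them against the Euler factors, entirely parallel to \propref{P:(1,1,1)one unramified special,two spherical}.
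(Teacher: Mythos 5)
Your outline for part (1) and for the unramified subcase of part (2) is essentially the computation in the paper: Macdonald's formula (\lmref{L:macdonald formula}), the unramified-special matrix coefficient (\lmref{L:mc for unramified special}) applied over $\cF$ with $q_{\cF}=q^{2}$, and a geometric-series match against the stated $L$-factors. (For (1) the paper actually integrates over the Cartan cells $\G(\cO_F)\pDII{\varpi^n}{1}\G(\cO_F)$ with their volumes rather than via \eqref{E:integral formula for iwahori}; since the integrand is $\G(\cO_F)$-bi-invariant your Iwahori route is equivalent, only slightly more roundabout.)

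The ramified subcase of part (2), however, is not right as you describe it. First, the twist does not act by translating the argument: by the $\G(\cF)$-invariance of $\cB_{\pi'}$ one has $\Phi_{\pi'}(h;\bft)=\cB_{\pi'}(\pi'(h\bft)\phi_{\pi'},\pi'(\bft)\phi_{\pi'})/\cB_{\pi'}(\phi_{\pi'},\phi_{\pi'})=\Phi_{\pi'}(\bft^{-1}h\bft)$, a conjugation. Thus on the torus elements $\pDII{\varpi_F^n}{1}$ the twist does nothing (they commute with $\bft$), and only the Weyl-coset terms get shifted; your reading "shifts the argument by $\pDII{\varpi_{\cF}^{2}}{1}$", taken literally as right translation, would replace the correct values $\Phi_{\pi'}(\pDII{\varpi_{\cF}^{2n}}{1})=q^{-2|n|}$ by $q^{-|2n+2|}$ and the sum would not reproduce the claimed answer. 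Second, and for the same reason, the invariance you propose to "verify" fails: conjugating the Iwahori $K_0(\varpi_{\cF})$ (which fixes the new vector of $\pi'={\rm St}_{\cF}\ot\chi'$) by $\bft=\pDII{\varpi_F}{1}$ and intersecting with $\G(\cO_F)$ yields the \emph{opposite} Iwahori (upper-right entry divisible by $\varpi_F$), not $K_0(\varpi_F)$, so \eqref{E:integral formula for iwahori} is not literally applicable to your integrand. The fix is routine but must be made: either use the opposite-Iwahori analogue of \eqref{E:integral formula for iwahori}, or first substitute $h\mapsto \pDII{\varpi_F}{1}\,h\,\pDII{\varpi_F^{-1}}{1}$ (an inner automorphism of $\G(F)$ preserving the measure), after which $\Phi_{\pi'}$ is evaluated without a twist and is bi-$K_0(\varpi_F)$-invariant, while the shift is transferred to $\Phi_{\pi}$, which is harmless since $\pi$ is spherical. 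With this correction the computation does proceed as in \propref{P:(1,1,1)one unramified special,two spherical} and yields the stated value $q^{-1}(1+q^{-1})^{-1}$; as written, though, the key step of your plan for this subcase would fail.
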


\begin{proof}
Let
\[
\pi
=
{\rm Ind}_{B(F)}^{\G(F)}\left(|\cdot|^{\lambda}_{F}\bt|\cdot|^{-\lambda}_{F}\right),\quad
\beta=|\varpi|_{F}^\lambda.
\]
We begin with $(1)$. Let
\[
\pi'
=
{\rm Ind}_{B(\cF)}^{\G(\cF)}\left(|\cdot|^{\lambda'}_{\cF}\bt|\cdot|^{-\lambda'}_{\cF}\right),
\quad 
\alpha=|\varpi_{\cF}|_{\cF}^{\lambda'}.
\]
For a non-negative integer $n$, let $X_n$ be the image of 
\[
\G(\cO_F)
\begin{pmatrix}
\varpi^n&0\\0&1
\end{pmatrix}
\G(\cO_F)
\]
in $F^{\x}\backslash\G(F)$. Note that
\[
\vol(X_n,dh)
=
\begin{cases}
1\quad&\text{if $n=0$},\\
q^n(1+q^{-1})\quad&\text{if $n\geq 1$}.
\end{cases}
\]
By \lmref{L:macdonald formula}, we have
\begin{align*}
I\left(\itPi,\bft\right)
&=
\sum_{n=0}^\infty
\Phi_{\pi'}
\left( 
\begin{pmatrix}
\varpi^n&0\\0&1
\end{pmatrix}
\right)
\Phi_\pi
\left( 
\begin{pmatrix}
\varpi^n&0\\0&1
\end{pmatrix}
\right) 
\vol(X_n,dh)\\
&=
\frac
{(1-\alpha^2 q^{-1})(1-\alpha^{-2} q^{-1})(1+\beta q^{-1/2})(1+\beta^{-1} q^{-1/2})}
{(1-\alpha^2\beta q^{-1/2})(1-\alpha^{-2}\beta q^{-1/2})(1-\alpha^2\beta^{-1}q^{-1/2})
(1-\alpha^{-2}\beta^{-1}q^{-1/2})}.
\end{align*}
Recall that the $L$-factor is given by
\begin{align*}
L(s,\itPi,r)&= (1-\alpha\beta p^{-s})^{-1}(1-\alpha\beta^{-1}p^{-s})^{-1}(1-\beta p^{-s})^{-1}\\
&\x (1-\beta^{-1}p^{-s})^{-1}(1-\alpha^{-1}\beta p^{-s})^{-1}(1-\alpha^{-1}\beta^{-1}p^{-s})^{-1}.
\end{align*}
This shows $(1)$.

Now we consider $(2)$.
Let $\pi'={\rm St}_{\cF}\ot\chi'$ for some unramified quadratic character 
$\chi'$ of $\cF^{\x}$. Suppose $\cF$ is unramified over $F$. By definition
\[
I\left(\itPi,\bft\right)
=
\int_{F^{\x}\backslash\G(F)}
\Phi_{\pi'}(h)\Phi_{\pi}(h)dh.
\]
Applying \eqref{E:integral formula for iwahori}, \lmref{L:macdonald formula} and
\lmref{L:mc for unramified special}, 
\begin{align*}
I\left(\itPi,\bft\right)
&=
(1+q)^{-1}
\sum_{n=-\infty}^\infty
\chi'(\varpi)^n
\Phi_{\pi}
\left( 
\begin{pmatrix}
\varpi^n&0\\0&1
\end{pmatrix}
\right) 
\left\lbrace 
q^{-|n|}-q^{-|n-1|}
\right\rbrace\\
&=
\frac{(1-q^{-1})(1+q^{-2})}{(1+q^{-1})}
\cdot 
\frac{(1-\chi'(\varpi)\alpha q^{-1/2})(1-\chi'(\varpi)\alpha^{-1}q^{-1/2})}
     {(1-\chi'(\varpi)\alpha q^{-3/2})(1-\chi'(\varpi)\alpha^{-1}q^{-3/2})}.
\end{align*}
Suppose $\cF$ is ramified over $F$. Similar calculations shows
\[
I\left(\itPi,\bft\right)
=
q^{-1}\frac{(1-q^{-1})}{(1+q^{-1})}\cdot
\frac{(1-\alpha^2 q^{-1})((1-\alpha^{-2} q^{-1}))}
{(1-\alpha^2 q^{-3/2})(1-\alpha^{-2} q^{-3/2})}.
\]
Finally, if $\cF/F$ is unramified, we have
\begin{align*}
L(s,\itPi,r)
&=
(1+\chi'(\varpi)\alpha q^{-s})^{-1}
(1-\chi'(\varpi)\alpha q^{-s-1})^{-1}\\
&\times(1+\chi'(\varpi)\alpha^{-1}q^{-s})^{-1}
(1-\chi'(\varpi)\alpha^{-1}q^{-s-1})^{-1},
\end{align*}
while if $\cF/F$ is ramified, 
\begin{align*}
L(s,\itPi,r)
&=
(1-\alpha q^{-s-1})^{-1}(1-\alpha^{-1}q^{-s-1})^{-1}. 
\end{align*}
This shows $(2)$ and our proof is complete.
\end{proof}

\begin{prop}\label{P:(2,1) with 1 unramified special}
Let $E=K\x F$ and $\pi={\rm St}_F\ot\chi$, where $\chi$ is a unramified quadratic character of 
$F^{\x}$.
\begin{itemize}
\item[(1)]
If $\pi'$ is sperical, $\chi(\varpi)=-1$ and $K$ is ramified over $F$, then we have
\[
I^*\left(\itPi,\bft\right)
=
2q^{-1}(1+q^{-1})^{-1}.
\]
\item[(2)]
If $\pi'={\rm St}_{\cF}\ot\chi'$, where $\chi'$ is a unramified quadratic character of $K^{\x}$,
then we have
\[
I^*(\itPi,\bft)
=
\begin{cases}
2q^{-1}(1+q^{-1})^{-1}(1+q^{-2})&\text{if $\cF$ is unramified over $F$ and $\chi'\chi(\varpi)=1$},\\
q^{-1}&\text{if $\cF$ is ramified over $F$}.
\end{cases}
\]
\end{itemize}
\end{prop}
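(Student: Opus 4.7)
The plan follows the template of Propositions \ref{P:(1,1,1)one unramified special,two spherical} and \ref{P:(2,1) with 1 spherical}: decompose $I(\itPi,\bft)$ via the Iwahori integration formula \eqref{E:integral formula for iwahori}, substitute the matrix-coefficient formulas of Lemmas \ref{L:macdonald formula} and \ref{L:mc for unramified special}, sum the resulting geometric series, and multiply by the normalization factor $\zeta_F(2)\zeta_E(2)^{-1}L(1,\itPi',\Ad)L(1/2,\itPi',r)^{-1}$.

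For case (1), $\bft_\cF=\pMX{\varpi_\cF^{-1}}{0}{0}{1}$ and the integrand factors as $\Phi_{\pi'}(h;\bft_\cF)\Phi_\pi(h)$. Using $\G(\cF)$-invariance of $\cB_{\pi'}$ and the triviality of the central character, $\Phi_{\pi'}(h;\bft_\cF)=\Phi_{\pi'}(\bft_\cF^{-1}h\bft_\cF)$, a standard spherical matrix coefficient on $\G(\cF)$. To apply \eqref{E:integral formula for iwahori} one first checks right $K_0(\varpi)$-invariance of the integrand: since $\cF/F$ is ramified one has $\varpi_\cF^{-1}\cO_\cF\cap F=\cO_F$ and $\varpi_\cF\cO_\cF\cap F=\varpi_F\cO_F$, so a direct calculation gives $\bft_\cF\G(\cO_\cF)\bft_\cF^{-1}\cap\G(F)=K_0(\varpi)$, which is exactly the invariance group of the new vector of the unramified special $\pi$. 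For $h=\pMX{\varpi^n}{0}{0}{1}$, the conjugation is trivial and Macdonald's formula over $\cF$ applies at Cartan exponent $v_\cF(\varpi_F^n)=2n$. For $h=\pMX{0}{1}{1}{0}\pMX{\varpi^n}{0}{0}{1}$, a direct computation using $\varpi_\cF^2=u\varpi_F$ shows $\bft_\cF^{-1}h\bft_\cF$ is $\G(\cO_\cF)$-equivalent modulo the $\cF$-center to $\pMX{\varpi_F^{n-1}}{0}{0}{1}$, so Macdonald's formula applies at Cartan exponent $2(n-1)$; the $\pi$-side is read off from Lemma \ref{L:mc for unramified special}.

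For case (2), $\bft=1$ and both $\pi'$, $\pi$ are Steinberg-twisted, so Lemma \ref{L:mc for unramified special} applies directly on each side, with the $\cF$- and $F$-uniformizers respectively. In each subcase, summing the two branches of \eqref{E:integral formula for iwahori} produces a pair of geometric series in $n$ that collapse to a rational function of $q^{-1/2}$; when $\cF/F$ is unramified, the hypothesis $\chi'\chi(\varpi)=1$ is precisely what prevents the alternating sum on the Weyl branch from vanishing and forces the extra factor $(1+q^{-2})$. Matching the resulting rational function against the explicit Euler factors of $L(1/2,\itPi',r)$, which split differently according to the ramification type of $\cF/F$ and the characters $\chi,\chi'$, and against $L(1,\itPi',\Ad)$, delivers the three claimed values. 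The principal obstacle I anticipate is the bookkeeping on the Weyl branch of case (1): tracking the exponent shift produced by conjugation by $\bft_\cF$ through Macdonald's formula over a ramified extension, and then reconciling the resulting Euler product with the indexing of $L(1/2,\itPi',r)$ by the Satake parameters of $\pi'$ restricted to $F$. Everything else is algebraic manipulation strictly parallel to the preceding propositions.
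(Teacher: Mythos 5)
Your proposal takes essentially the same route as the paper's proof: the Iwahori integration formula \eqref{E:integral formula for iwahori} combined with \lmref{L:macdonald formula} and \lmref{L:mc for unramified special}, with the identity $\Phi_{\pi'}(h;\bft)=\Phi_{\pi'}(\bft^{-1}h\bft)$ and the Cartan-exponent shift over the ramified extension handled correctly in case (1) — indeed more explicitly than the paper, which leaves the bi-$K_0(\varpi)$-invariance and the exponent bookkeeping implicit. One cosmetic correction: in case (2) with $\cF/F$ unramified, neither branch of \eqref{E:integral formula for iwahori} vanishes on its own when $\chi'\chi(\varpi)=-1$; the factor $(1+\chi'\chi(\varpi))$ comes from cancellation between the diagonal and Weyl branches, but this does not affect the validity of your plan.
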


\begin{proof}
We first consider $(1)$. By definition,
\[
I\left(\itPi,\bft\right)
=
\int_{F^{\x}\backslash\G(F)}
\Phi(h)dh,
\]
where
\[
\Phi(h)
=
\Phi_{\pi'}
\left(h;
\begin{pmatrix}
\varpi^{-1}_{\cF}&0\\0&1
\end{pmatrix}
\right) 
\Phi_{\pi}(h),\quad h\in\G(F).
\]
By \eqref{E:integral formula for iwahori}, \lmref{L:macdonald formula} and
\lmref{L:mc for unramified special}, we have 
\begin{align*}
I\left(\itPi,\bft\right)
&=
(1+q)^{-1}(1-\chi(\varpi))\sum_{n=-\infty}^{\infty}\chi(\varpi)^n
\Phi_{\pi'}
\left( 
\begin{pmatrix}
\varpi^n&0\\0&1
\end{pmatrix}
\right)\\
&=
2q^{-1}\frac{(1-q^{-1})}{(1+q^{-1})^2}\cdot
\frac{(1+\alpha^2 q^{-1})(1+\alpha^{-2}q^{-1})}
{(1-\alpha^2 q^{-1})(1-\alpha^{-2}q^{-1})}. 
\end{align*}
Notice that
\[
L(s,\itPi,r)
=
(1-\chi(\varpi)\alpha^2q^{-s-1/2})^{-1}(1-\chi(\varpi)\alpha^{-2}q^{-s-1/2})^{-1}
(1-\chi(\varpi)q^{-s-1/2})^{-1}.
\]
This shows $(1)$.

Now we consider $(2)$. By definition,
\[
I\left(\itPi,\bft\right)
=
\int_{F^{\x}\backslash\G(F)}
\Phi(h)dh,
\]
where
\[
\Phi(h)
=
\Phi_{\pi'}(h)\Phi_{\pi}(h),\quad h\in\G(F).
\]
Suppose $\cF$ is umramified over $F$.
Applying \eqref{E:integral formula for iwahori}, \lmref{L:mc for unramified special}, we find that 
\begin{align*}
I\left(\itPi,\bft\right)
&=
(1+q)^{-1}
\left\lbrace 
\sum_{n=-\infty}^\infty
\Phi
\left( 
\begin{pmatrix}
\varpi^n&0\\0&1
\end{pmatrix}
\right)q^{|n|}
+
\sum_{n=-\infty}^\infty
\Phi
\left(
\pMX 0110\begin{pmatrix}
\varpi^n&0\\0&1
\end{pmatrix}
\right)q^{|n-1|}
\right\rbrace\\
&=
(1+q)^{-1}(1+\chi'\chi(\varpi))
\frac{(1+\chi'\chi(\varpi)q^{-2})}{(1-\chi'\chi(\varpi)q^{-2})}.
\end{align*}
When $\cF$ is ramified over $F$, a similar calculation shows that
\[
I^*\left(\itPi,\bft\right)
=
q^{-1}
\frac{(1+\chi'\chi(\varpi)q^{-1})}
{(1-\chi'\chi(\varpi)q^{-2})}.
\]
Note that the $L$-factors are
\[
L(s,\itPi,r)
=
\begin{cases}
(1-\chi'\chi(\varpi)q^{-s-3/2})^{-1}(1-q^{-2s-1})^{-1}&\text{if $K/F$ is unramified},\\
(1-\chi(\varpi)q^{-s-3/2})^{-1}(1-\chi(\varpi)q^{-s-1/2})^{-1}&\text{if $K/F$ is ramified}.
\end{cases}
\]
This proves the proposition.
\end{proof}

\begin{prop}\label{P:(3,0) case}
Let $E$ is a field. 
\begin{itemize}
\item[(1)]
If $E$ is ramified over $F$ and $\itPi$ is spherical, then we have
\[
I^*\left(\itPi,\bft\right)
=
1
\]
\item[(2)]
If $\itPi={\rm St}_E\ot\chi$, where $\chi$ is the non-trivial 
unramified quadratic character of $E^{\x}$, then we have
\[
I\left(\itPi,\bft\right)
=
\begin{cases}
2q^{-1}(1+q^{-1})^{-1}(1-q^{-1}+q^{-2})&\text{if $E/F$ is unramified},\\
2q^{-1}(1+q^{-1})^{-1}&\text{if $E/F$ is ramified}.
\end{cases}
\]
\end{itemize}
\end{prop}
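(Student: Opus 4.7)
Both parts follow the template of Propositions \ref{P:(1,1,1)one unramified special,two spherical}--\ref{P:(2,1) with 1 unramified special}. From \subsecref{SS:raising element} one reads off the test element: in part (1), and also in the unramified subcase of part (2), $\bft = 1$ (the ``all other cases'' bracket), while in the ramified subcase of part (2) we use $\bft = \pMX{\varpi_E^{-1}}{0}{0}{1}$.

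By $\G(E)$-invariance of $\cB_\itPi$, the integrand of $I(\itPi,\bft)$ is identified with $\Phi_\itPi(\bft^{-1} h \bft)$, the matrix coefficient of $\itPi$ on $\G(E)$ in the sense of \defref{D:matrix coefficient} (applied over $E$). In part (1) this function is bi-$\GL_2(\cO_F)$-invariant, since $\itPi$ is spherical on $\G(E)$; one therefore computes $I(\itPi,\bft)$ via the Cartan decomposition of $\G(F)$ with the standard double-coset volumes $q_F^n(1+q_F^{-1})$ for $n\geq 1$, exactly as in the proof of \propref{P:(2,1) with 1 spherical} part (1). In part (2), $\Phi_\itPi$ is bi-$\cU_0(\varpi_E\cO_E)$-invariant, and a short check using $\varpi_F\cO_E=\varpi_E^3\cO_E$ (needed only to handle the non-trivial $\bft$ in the ramified subcase) yields bi-$K_0(\varpi_F)$-invariance on $\G(F)$, so the integration formula \eqref{E:integral formula for iwahori} applies.

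To evaluate $\Phi_\itPi$ at the standard representatives $\pMX{\varpi_F^n}{0}{0}{1}$ and $\pMX{0}{1}{1}{0}\pMX{\varpi_F^n}{0}{0}{1}$ viewed inside $\G(E)$, write $\varpi_F = u\varpi_E^e$ with $u\in\cO_E^\times$ and ramification index $e\in\{1,3\}$; absorbing $u$ via right $\cU_0(\varpi_E\cO_E)$-invariance places these elements in the standard $\G(E)$-cosets indexed by $en$. \lmref{L:macdonald formula} in part (1) and \lmref{L:mc for unramified special} in part (2), both applied over $E$, then give closed-form values, and summing the resulting geometric series in $n$ yields $I(\itPi,\bft)$. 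Finally, one multiplies by $\tfrac{\zeta_F(2)}{\zeta_E(2)}\cdot\tfrac{L(1,\itPi',\Ad)}{L(1/2,\itPi',r)}$ to obtain $I^*$. The main technical obstacle is expressing these $L$-factors, naturally given in terms of $q_E$, in terms of $q_F$: in the unramified subcase of part (2) one has $q_E = q_F^3$, and the identities $1\pm q^{-3} = (1\pm q^{-1})(1\mp q^{-1}+q^{-2})$ produce precisely the factor $1-q^{-1}+q^{-2}$ in the stated formula. Once this bookkeeping is in place, the claimed equalities follow by direct computation, in parallel with the earlier propositions.
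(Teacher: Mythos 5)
Your proposal is correct and follows essentially the same route as the paper: part (1) via the Cartan decomposition with the volumes $1$ and $q^n(1+q^{-1})$ together with Macdonald's formula applied over $E$, and part (2) via the Iwahori integration formula \eqref{E:integral formula for iwahori} together with \lmref{L:mc for unramified special} over $E$ (with the index stretched by the ramification degree), followed by the $q_E$-versus-$q_F$ bookkeeping in the normalization. Note that you implicitly (and correctly) read the displayed value in part (2) as the normalized integral $I^*(\itPi,\bft)$, which is also what the paper's own computation produces.
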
 

\begin{proof}
Suppose $\itPi$ is spherical and $E/F$ is ramified. Let 
\[
\itPi
=
{\rm Ind}_{B(E)}^{\G(E)}\left(|\cdot|^{\lambda}_E\bt|\cdot|^{-\lambda}_E\right),\quad
\alpha=|\varpi_E|^{\lambda}_E.
\]
For a non-negative integer $n$, let $X_n$ be the image of 
\[
\G(\cO_F)
\begin{pmatrix}
\varpi^n&0\\0&1
\end{pmatrix}
\G(\cO_F)
\]
in $F^{\x}\backslash\G(F)$. Note that
\[
{\rm Vol}(X_n,dh)
=
\begin{cases}
1\quad&\text{if $n=0$},\\
q^n(1+q^{-1})\quad&\text{if $n\geq 1$}.
\end{cases}
\]
Applying \lmref{L:macdonald formula},
\[
I\left(\itPi,\bft\right)
=
\sum_{n=0}^\infty
\Phi_{\itPi}
\left( 
\begin{pmatrix}
\varpi^n&0\\0&1
\end{pmatrix}
\right) 
{\rm Vol}(X_n,dh)
=
\frac{(1-q^{-1})(1+\alpha q^{-1/2})(1+\alpha^{-1}q^{-1/2})}
{(1-\alpha^3 q^{-1/2})(1-\alpha^{-3}q^{-1/2})}.
\]
Notice that 
\begin{align*}
L(s,\Pi,r)
=
(1-\alpha^{3}p^{-s})^{-1}(1-\alpha p^{-s})^{-1}(1-\alpha^{-1}p^{-s})^{-1}(1-\alpha^{-3}p^{-s})^{-1}.
\end{align*}
This proves $(2)$.

Suppose $\itPi={\rm St}_E\ot\chi$, where $\chi$ is the non-trivial 
unramified quadratic character of $E^{\x}$. If $E/F$ is unramified,
\[
I\left(\itPi,\bft\right)
=
\int_{F^{\x}\backslash\G(F)}
\Phi_{\itPi}(h)dh.
\]
By \eqref{E:integral formula for iwahori} and \lmref{L:mc for unramified special}, we obtain
\begin{align*}
I\left(\itPi,\bft\right)
&=
(1+q)^{-1}
\left\lbrace 
\sum_{n=-\infty}^\infty
\Phi_{\itPi}
\left( 
\begin{pmatrix}
\varpi^n&0\\0&1
\end{pmatrix}
\right)q^{|n|}
+
\sum_{n=-\infty}^\infty
\Phi_{\itPi}
\left( 
\pMX 0110 \begin{pmatrix}
\varpi^n&0\\0&1
\end{pmatrix}
\right)q^{|n-1|}
\right\rbrace\\  
&=
(1+q)^{-1}(1-\chi(\varpi)) 
\frac{(1+\chi(\varpi)q^{-2})}{(1-\chi(\varpi)q^{-2})}. 
\end{align*}
When $E/F$ is ramified, similar calculations show
\[
I^*(\itPi,\bft)
=
\frac{2q^{-1}}{(1+q^{-1})}.
\]
On the other hand, the $L$-factors are 
\begin{align*}
L(s,\Pi,r)
&= \left \{ \begin{array}{ll}
(1-\chi(\varpi)q^{-s-3/2})^{-1}(1+\chi(\varpi) q^{-s-1/2}+q^{-2s-1})^{-1} & {\mbox{if }} E/F 
\mbox{ is} \mbox{ unramified }, \\
(1-\chi(\varpi)q^{-s-3/2})^{-1} & {\mbox{if }} E/F \mbox{ is} \mbox{ ramified }.
\end{array} \right.
\end{align*}
This completes the proof.
\end{proof}

\section{The calculation of local zeta integral: (II)}
\label{S:local zeta integral: division algebra}
The purpose of this section is to compute the normalized zeta integral $I^*(\itPi,\bft)$ in \defref{D: definition of zeta integral} when $D$ is a division algebra over $F$.

\subsection{Haar measures}\label{SS:measure for division algebra}
Haar measures on $F$ and $F^{\x}$ are the same as in \subsecref{SS:measure for matrix algebra}.
We describe the choice of Haar measures on $D^{\x}(F)$. When $F=\R$, let $dh$ be the Haar measure 
on $D^{\x}(\R)$ such that ${\rm Vol}(D^{\x}(\R)/\R^{\x},dh/d^{\x}t)=1$, where 
$d^{\x}t=|t|^{-1}_{\R}dt$ and $dt$ is the usual Lebesgue measure on $\R$. When $F$ is 
non-archimedean, let $\cO_D$ be its maximal compact 
subring. Then $dh$ is chosen so that ${\rm Vol}\left(\cO_D^{\x},dh\right)=1$.

In any cases, the measure on the quotient space $F^{\x}\backslash D^{\x}(F)$ is the unique 
quotient measure induced from the measure $dh$ on $D^{\x}(F)$ and the measure $d^{\x}x$ on $F^{\x}$.

\subsection{Embeddings}\label{SS:embeddings}
We fix various embeddings in this section. Following results depend on these embeddings. 
When $F=\R$, we embedded $D(\R)$ in ${\rm M}_2(\C)$ in the usual way. More precisely, we let
\[
D(\R)=\mathbf{H}
=
\stt{\pMX{a}{b}{-\b{b}}{\b{a}}\in {\rm M}_2(\C)}.
\]
When $F$ is non-archimedean and $E=K\x F$, we have $D(E)={\rm M}_2(K)\x D(F)$ and
we fix an embedding:
\[
\iota: D(F)\rightarrow {\rm M}_2(\cF),
\]
so that 
\[
\iota(D(F))
=
\stt{
\begin{pmatrix}
\alpha&\beta\\ \omega\b{\beta}&\b{\alpha}
\end{pmatrix}
\mid
\alpha, \beta\in \cF},
\]
where $x\mapsto\b{x}$ is the non-trivial Galois action on $x\in \cF$, and $\omega$ is either
$\varpi$ or a unit $u$ such that $F(\sqrt{u})$ is the unramified extension over $F$, 
according to $\cF$ is unramified or ramified over $F$. We then identify $D(F)$ with
its image under the embedding $\iota$. 
The maximal order $\cO_D$ in $D(F)$ is then 
\[
\stt{\begin{pmatrix}
\alpha&\beta\\ \omega\b{\beta}&\b{\alpha}
\end{pmatrix}
\mid
\alpha, \beta\in\cO_{\cF}}.
\] 
Let
\[
\varpi_D
=
\begin{pmatrix}
0&1\\\varpi&0
\end{pmatrix}
\quad\text{or}\quad
\varpi_D
=
\begin{pmatrix}
\varpi_{\cF}&0\\0&-\varpi_{\cF}
\end{pmatrix},
\]
according to $\cF$ is unramified or ramified over $F$. We have
\begin{equation}\label{E:decomposition of division modulo center}
F^{\x}\backslash D^{\x}(F)
=
\left( 
\cO_F^{\x}\backslash\cO_D^{\x}
\right) 
\sqcup
\varpi_D
\left( 
\cO_F^{\x}\backslash\cO_D^{\x}
\right).
\end{equation}
Note that 
\[
{\rm Vol}(\cO_F^{\x}\backslash\cO_D^{\x}, dh)=1,
\] 
according to our choice of measures. 

\subsection{The archimedean case}
\label{SS:balanced case}
In this case, we have following realizations
\[
\left(\pi_j,V_{\pi_j}\right)
=
\left(\rho_{k_j},\cL_{k_j}(\C)\right)
\] 
for $j=1,2,3$, where
\[
\cL_{k_j}(\C)=\bigoplus_{n_j=0}^{k_j-2}\C\cdot X_j^{n_j}Y_j^{k_j-2-n_j}
\quad\text{with}\quad
\rho_{k_j}(g)P(X_j,Y_j)=P((X_j,Y_j)g){\rm det(g)}^{-k_j/2-1},
\]
for $g\in D^{\x}(\R)$ and $P(X_j,Y_j)\in\cL_{k_j-2}(\C)$. 
The representation space of $\itPi$ is given by
\begin{equation}\label{E:representation space for balanced case}
V_{\itPi}=\cL_{k_1}(\C)\ot\cL_{k_2}(\C)\ot\cL_{k_3}(\C).
\end{equation}
The new line $V^{\rm new}_{\itPi}$ in this case is the one dimensional subspace fixed by $D^\x(\R)$. Let $\bfP_{\ul{k}}$ be the distinguished vector in $V_{\itPi}^{\rm new}$ defined by
\begin{equation}\label{E:invariant vector}
\bfP_{\ul{k}}
=
\det\begin{pmatrix}
X_1&X_2\\
Y_1&Y_2
\end{pmatrix}^{k_3^*}\otimes \det\begin{pmatrix}
X_2&X_3\\
Y_2&Y_3
\end{pmatrix}^{k_1^*}\otimes \det\begin{pmatrix}
X_3&X_1\\
Y_3&Y_1
\end{pmatrix}^{k_2^*}
\end{equation} 
where  $k_3^*=(k_1+k_2-k_3-2)/2$, $k_1^*=(k_2+k_3-k_1-2)/2$ and $k_2^*=(k_1+k_3-k_2-2)/2$. 
Its clear that $\bfP_{\ul{k}}$ is non-zero and invariant by $D^\x(\R)$. Therefore, we have
\[
V_{\itPi}^{\rm new}=\C\cdot\bfP_{\ul{k}}.
\] 

Let $\langle\,,\,\rangle_{k_j}$ be the $D^{\x}(\R)$-invariant bilinear pairing on 
$\cL_{k_j-2}(\C)$ defined by 
\begin{equation}\label{E:pairing for balanced case}
\langle X_j^{n_j}Y_j^{k_j-2-n_j}, X_j^{m_j}Y_j^{k_j-2-m_j}\rangle_{k_j}
=
\begin{cases}
(-1)^{n_j}\begin{pmatrix}k_j-2\\n_j\end{pmatrix}^{-1}\quad&\text{if $n_j+m_j=k_j-2$},\\
0\quad&\text{if $n_j+m_j\neq k_j-2$},
\end{cases}
\end{equation}
for $0\leq n_j, m_j\leq k_j-2$.  Let $\langle\,,\,\rangle_{\ul{k}}$ be the $D^{\x}(E)$-invariant pairing on $V_\itPi$ given by
\begin{equation}\label{E:pairing for balanced case 2}
\langle\,,\,\rangle_{\ul{k}}
=
\langle\,,\,\rangle_{k_1}
\ot
\langle\,,\,\rangle_{k_2}
\ot
\langle\,,\,\rangle_{k_3}.
\end{equation}
In this case, the normalized local zeta integral $I^*(\itPi,\bft)$ in \defref{D: definition of zeta integral} is equal to 
\beq\label{E:local period integral for the balabced case}
I^*(\itPi,\bft)
=
\frac{\zeta_F(2)}{\zeta_E(2)}\cdot
\frac{L(1,\itPi',{\rm Ad})}{L(1/2,\itPi', r)}\cdot
\langle\bfP_{\ul{k}},\bfP_{\ul{k}}\rangle_{\ul{k}},
\eeq
where $\itPi'$ is the Jacquet-Langlands lift of $\itPi$ to $\G(\R)$.
We proceed to compute the value
$\langle\bfP_{\ul{k}},\bfP_{\ul{k}}\rangle_{\ul{k}}$. Let $\ell$ be the linear map
\[
\ell:V_{\itPi}\rightarrow V_{\itPi}^{D^\x(\R)}=V_{\itPi}^{\rm new},
\quad v\mapsto \ell(v)=\int_{\R^{\x}\backslash D^{\x}(\R)}\itPi(h)v\,dh.
\]
Since $\ell(\bfP_{\ul{k}})=\bfP_{\ul{k}}\neq 0$, we have $\ell\neq 0$ and hence surjective.
We have the following equality
\begin{equation}\label{E:observation}
\langle\bfP_{\ul{k}}, \bfP_{\ul{k}}\rangle_{\ul{k}}
\cdot
\langle\ell(v_1), \ell(v_2)\rangle_{\ul{k}}
=
\langle v_1, \bfP_{\ul{k}}\rangle_{\ul{k}}
\cdot
\langle v_2, \bfP_{\ul{k}}\rangle_{\ul{k}}
\end{equation}
for every $v_1, v_2\in\cL(\C)$.

\begin{prop}\label{P:archimedean balanced case}
We have \[
I^*\left(\itPi, \bft\right)
=
\frac{(k_1-1)(k_2-1)(k_3-1)}{4\pi^2}.
\]
\end{prop}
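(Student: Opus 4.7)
The plan is to evaluate the three factors in formula \eqref{E:local period integral for the balabced case}. The first, $\zeta_F(2)/\zeta_E(2)=\zeta_\R(2)^{-2}=\pi^2$, is immediate. For the second, each $\pi_j$ is a (limit of) discrete series of weight $k_j$, so $L(s,\pi_j,\mathrm{Ad})=\zeta_\R(s+1)\zeta_\C(s+k_j-1)$, and in the balanced case the Weil-group decomposition of $\phi_1\otimes\phi_2\otimes\phi_3$ yields
\[
L(s,\itPi',r)=\zeta_\C\bigl(s+k_1^*+k_2^*+k_3^*+\tfrac{3}{2}\bigr)\prod_{i=1}^{3}\zeta_\C\bigl(s+k_i^*+\tfrac{1}{2}\bigr).
\]
Evaluating at $s=1$ and $s=1/2$ and dividing gives a closed-form ratio of $\Gamma$-functions and powers of $2\pi$, expressible in terms of $(k_j-1)!$, $k_j^*!$, and $(k_1^*+k_2^*+k_3^*+1)!$.

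The central task is to compute $\langle \bfP_{\ul k},\bfP_{\ul k}\rangle_{\ul k}$. Writing $D_{ij}:=X_iY_j-X_jY_i$, so that $\bfP_{\ul k}=D_{12}^{k_3^*}D_{23}^{k_1^*}D_{31}^{k_2^*}$, I would expand each factor by the binomial theorem, rewrite $\bfP_{\ul k}$ in the monomial basis $\{X_j^{n_j}Y_j^{k_j-2-n_j}\}$ of each $\cL_{k_j}(\C)$, and apply \eqref{E:pairing for balanced case 2} term-by-term. The constraint $n_j+m_j=k_j-2$ on each tensor factor imposed by \eqref{E:pairing for balanced case} collapses the double sum for $\langle\bfP_{\ul k},\bfP_{\ul k}\rangle_{\ul k}$ to a single triple sum of signed products of binomial coefficients, which should evaluate in closed form by iterated Chu--Vandermonde. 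An alternative route uses \eqref{E:observation}: for a convenient monomial $v$, the projection $\ell(v)=\int_{\R^\x\backslash D^\x(\R)} \itPi(h)v\,dh$ equals the component of $v$ along $\bfP_{\ul k}$, and Peter--Weyl/Schur orthogonality on the compact quotient $\R^\x\backslash D^\x(\R)$ computes $\langle\ell(v),\ell(v)\rangle_{\ul k}$; then \eqref{E:observation} gives $\langle\bfP_{\ul k},\bfP_{\ul k}\rangle_{\ul k}=\langle v,\bfP_{\ul k}\rangle_{\ul k}^{\,2}/\langle\ell(v),\ell(v)\rangle_{\ul k}$.

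Substituting both closed-form expressions into \eqref{E:local period integral for the balabced case}, the factors $k_j^*!$ and $(k_1^*+k_2^*+k_3^*+1)!$ should cancel between $\langle\bfP_{\ul k},\bfP_{\ul k}\rangle_{\ul k}$ and the $L$-factor ratio, together with all but one unit of each $(k_j-1)!$, collapsing to $(k_1-1)(k_2-1)(k_3-1)/(4\pi^2)$. The principal obstacle is the combinatorial evaluation of $\langle\bfP_{\ul k},\bfP_{\ul k}\rangle_{\ul k}$: because the three determinants share variables cyclically, each variable $X_j,Y_j$ is affected by two of the three binomial expansions, so the resulting triple sum is not immediately separable; careful tracking of signs and indices is required to reduce it to a Chu--Vandermonde-type closed form. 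The Peter--Weyl approach sidesteps this at the cost of matching the Haar-measure normalization on $\R^\x\backslash D^\x(\R)$ against the conventions of \subsecref{SS:measure for division algebra} and verifying the appropriate $(k_j-1)^{-1}$ Schur-orthogonality factors, so the two approaches are essentially equivalent in difficulty.
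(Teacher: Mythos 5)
Your overall reduction is the right one, and your archimedean factors are correct: $\zeta_F(2)/\zeta_E(2)=\pi^2$, your expression for $L(s,\itPi',r)$ agrees with the paper's, and the final cancellation does come down to the single quantity $\langle\bfP_{\ul{k}},\bfP_{\ul{k}}\rangle_{\ul{k}}$, whose value must be $\Gamma(k_1^*+k_2^*+k_3^*+2)\Gamma(k_1^*+1)\Gamma(k_2^*+1)\Gamma(k_3^*+1)/\bigl(\Gamma(k_1-1)\Gamma(k_2-1)\Gamma(k_3-1)\bigr)$ (note $k_i^*+k_j^*+1=k_l-1$). The gap is that this one computation, which carries all the content of the proposition, is never actually performed, and the shortcut you propose for it is circular. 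Since the trivial representation occurs exactly once in $V_\itPi$ restricted to the diagonal $\R^{\x}\backslash D^{\x}(\R)\cong\stt{\pm1}\backslash\SU(2)$, the projection is $\ell(v)=\langle v,\bfP_{\ul{k}}\rangle_{\ul{k}}\,\bfP_{\ul{k}}/\langle\bfP_{\ul{k}},\bfP_{\ul{k}}\rangle_{\ul{k}}$, so $\langle\ell(v),\ell(v)\rangle_{\ul{k}}=\langle v,\bfP_{\ul{k}}\rangle_{\ul{k}}^2/\langle\bfP_{\ul{k}},\bfP_{\ul{k}}\rangle_{\ul{k}}$ and the identity \eqref{E:observation} is a tautology: ``Peter--Weyl/Schur orthogonality'' cannot produce $\langle\ell(v),\ell(v)\rangle_{\ul{k}}$ independently of the unknown you are trying to find. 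Moreover $V_\itPi$ is not irreducible under the diagonal $\SU(2)$, so the $(k_j-1)^{-1}$ orthogonality factors you invoke are not the ones that enter; what enters is exactly the Clebsch--Gordan datum for the trivial constituent of $\rho_{k_1}\ot\rho_{k_2}\ot\rho_{k_3}$, i.e.\ $\langle\bfP_{\ul{k}},\bfP_{\ul{k}}\rangle_{\ul{k}}$ again.

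What breaks the circle in the paper is an honest evaluation of the matrix-coefficient integral $\int_{\SU(2)}\langle\itPi(u)v_1,v_2\rangle_{\ul{k}}\,du$: the two vectors are chosen \emph{distinct}, $v_1=X_1^{k_1-2}\ot Y_2^{k_2-2}\ot X_3^{k_1^*}Y_3^{k_2^*}$ and $v_2=Y_1^{k_1-2}\ot X_2^{k_2-2}\ot X_3^{k_2^*}Y_3^{k_1^*}$, precisely so that after the explicit parametrization of $\SU(2)$ only a single sum over one index survives, each term being an elementary integral of $|\alpha|_{\C}^{\,\cdot}|\beta|_{\C}^{\,j}$; that single sum is then evaluated in closed form by the generating-function identity of \lmref{L:combintorial identity 3}, and \eqref{E:observation} converts the answer into $\langle\bfP_{\ul{k}},\bfP_{\ul{k}}\rangle_{\ul{k}}$. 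Neither this choice of test vectors nor the combinatorial identity (nor any substitute, e.g.\ a Pfaff--Saalsch\"utz-type evaluation or classical $3j$-symbol normalizations) appears in your proposal. Your first route --- expanding $\bfP_{\ul{k}}$ and pairing term by term --- could in principle be pushed through, but as you yourself note the three determinants entangle the indices, and ``should evaluate by iterated Chu--Vandermonde'' is an assertion rather than an argument; this is exactly the step where the proof currently has no content, so as it stands the proposal does not establish the stated value of $I^*(\itPi,\bft)$.
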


\begin{proof} Note that the $L$-factor is given by
\begin{align*}
L(s,\itPi,r)
&=
\zeta_{\C}(s+(k_1+k_2+k_3-3)/2))\zeta_{\C}(s+(-k_1+k_2+k_3-1)/2)\\
&\x\zeta_{\C}(s+(k_1-k_2+k_3-1)/2)\zeta_{\C}(s+(k_1+k_2-k_3-1)/2)).
\end{align*}
In view of \eqref{E:local period integral for the balabced case}, it suffices to show that 
\[
\langle\bfP_{\ul{k}}, \bfP_{\ul{k}}\rangle_{\ul{k}}
=
\frac{\Gamma(k_1^*+k_2^*+k_3^*+2)\Gamma(k_1^*+1)\Gamma(k_2^*+1)\Gamma(k_3^*+1)}
     {\Gamma(k_1^*+k_2^*+1)\Gamma(k_1^*+k_3^*+1)\Gamma(k_2^*+k_3^*+1)}.
\]
By direct computation, we have
\[
\begin{aligned}
\bfP_{\ul{k}}
=
\sum_{n_1=0}^{k^*_1}\sum_{n_2=0}^{k^*_2}\sum_{n_3=0}^{k^*_3}
&\begin{pmatrix}k^*_1\\n_1\end{pmatrix}
\begin{pmatrix}k^*_2\\n_2\end{pmatrix}
\begin{pmatrix}k^*_3\\n_3\end{pmatrix}
(-1)^{(k^*_1+k^*_2+k^*_3)-(n_1+n_2+n_3)}\\
&X_1^{k^*_2-n_2+n_3} Y_1^{k^*_3+n_2-n_3}\ot
X_2^{k^*_3+n_1-n_3} Y_2^{k^*_1-n_1+n_3}\ot
X_3^{k^*_1-n_1+n_2} Y_3^{k^*_2+n_1-n_2}.
\end{aligned}
\]
The coefficient in front of the vector 
$v_1:=X_1^{k_1-2}\otimes Y_2^{k_2-2}\otimes X_3^{k_1^*}Y_3^{k^*_2}$
in the expression of $\bfP_{\ul{k}}$ is equal to $(-1)^{k^*_1+k^*_2}$. 
On the other hand, the coefficient in front of the vector
$v_2:=Y_1^{k_1-2}\ot X_2^{k_2-2}\otimes X_3^{k_2^*}Y_3^{k^*_1}$ is $(-1)^{k^*_3}$.
It follows that 
\begin{equation}\label{E:value}
\langle v_1, \bfP_{\ul{k}}\rangle_{\ul{k}}
\cdot
\langle v_2, \bfP_{\ul{k}}\rangle_{\ul{k}}
=
(-1)^{k^*_1+k^*_2+k^*_3}\cdot\langle v_1,v_2\rangle_{\ul{k}}^2
=
(-1)^{k^*_1+k^*_2+k^*_3}
\begin{pmatrix}
k^*_1+k^*_2\\k_1^*
\end{pmatrix}^{-2}.
\end{equation}
On the other hand, we have
\[
\langle\ell(v_1),\ell(v_2)\rangle_{\ul{k}}
=
\int_{\R^{\x}\backslash D^{\x}(\R)}
\langle\itPi(h)v_1, v_2\rangle_{\ul{k}}dh.
\]
Note that
\[
\R^{\x}\backslash D^{\x}(\R)\cong \stt{\pm 1}\backslash\SU(2),
\]
We parametrize $u=\begin{pmatrix}\alpha&\beta\\-\b{\beta}&\b{\alpha}\end{pmatrix}\in\SU(2)$ 
by setting $\alpha=\text{cos}\,\theta\cdot e^{i\varphi}$ and 
$\beta=\text{sin}\,\theta\cdot e^{i\chi}$ with $0\leq \theta\leq\pi/2$ and 
$0\leq \varphi, \chi\leq 2\pi$. For $\Phi\in\text{L}^1(\SU(2))$, we have
\begin{equation}\label{E:int G}
\int_{\SU(2)}\Phi(u)\,du
=
\frac{1}{2\pi^2}\int_0^{2\pi}\int_0^{2\pi}\int_0^{\frac{\pi}{2}}\,\,
\Phi(\theta,\varphi,\chi)\cdot\text{sin}\,2\theta\,\,d\theta\,d\varphi\,d\chi.
\end{equation}
Our choice of the Haar measure on $\R^{\x}\backslash D^{\x}(\R)$ 
implies the total volume of $\SU(2)$ is equal to 2.\\
Let $u=\begin{pmatrix}\alpha&\beta\\-\b{\beta}&\b{\alpha}\end{pmatrix}\in\SU(2)$. 
By \eqref{E:int G}, we have
\begin{align*}
\int_{\R^{\x}\backslash D^{\x}(\R)}
\langle\itPi(h)v_1, v_2\rangle_{\ul{k}}dh
&=
\frac{1}{2}\int_{\SU(2)}
\langle\itPi(u)v_1, v_2\rangle_{\ul{k}}du\\
&=
(-1)^{k^*_1+k^*_2+k^*_3}
\begin{pmatrix}
k^*_1+k^*_2\\k^*_1
\end{pmatrix}^{-1}
\sum_{j=0}^{k^*_1}
\begin{pmatrix}
k^*_1\\j
\end{pmatrix}
\begin{pmatrix}
k^*_2\\j
\end{pmatrix}
\frac{(-1)^j}{2}\int_{\SU(2)}
|\alpha|^{k^*_1+k^*_1+k^*_3-j}_{\C}|\beta|^j_{\C}du\\
&=
(-1)^{k^*_1+k_1-2}(k^*_1+k^*_2+k^*_3+1)^{-1}
\begin{pmatrix}k^*_1+k^*_2\\k^*_1\end{pmatrix}^{-1}
\sum_{j=0}^{k_1^*}(-1)^j
\frac{\begin{pmatrix}k_1^*\\j\end{pmatrix}\begin{pmatrix}k_2^*\\j\end{pmatrix}}
{\begin{pmatrix}k^*_1+k^*_2+k^*_3\\j\end{pmatrix}}. 
\end{align*}
Using \eqref{E:observation}, \eqref{E:value} and the equation above, we obtain
\begin{align*}
\langle\bfP_{\ul{k}}, \bfP_{\ul{k}}\rangle_{\ul{k}}^{-1}
&=
(k^*_1+k^*_2+k^*_3+1)^{-1}
\begin{pmatrix}k^*_1+k^*_2\\k^*_1\end{pmatrix}
\sum_{j=0}^{k_1^*}(-1)^j
\frac{\begin{pmatrix}k_1^*\\j\end{pmatrix}\begin{pmatrix}k_2^*\\j\end{pmatrix}}
{\begin{pmatrix}k^*_1+k^*_2+k^*_3\\j\end{pmatrix}}\\
&=
(k^*_1+k^*_2+k^*_3+1)^{-1}
\begin{pmatrix}k^*_1+k^*_2\\k^*_1\end{pmatrix}
\begin{pmatrix}k_1^*+k_2^*+k_3^*\\k_2^*+k_3^*\end{pmatrix}^{-1}
\sum_{j=0}^n(-1)^j
\begin{pmatrix}k_2^*\\j\end{pmatrix}
\begin{pmatrix}k_1^*+k_2^*+k_3^*-j\\k_2^*+k_3^*\end{pmatrix}\\
&=
\frac{\Gamma(k_1^*+k_2^*+1)\Gamma(k_1^*+k_3^*+1)\Gamma(k_2^*+k_3^*+1)}
{\Gamma(k_1^*+k_2^*+k_3^*+2)\Gamma(k_1^*+1)\Gamma(k_2^*+1)\Gamma(k_3^*+1)}.
\end{align*}
The last equality follows from \lmref{L:combintorial identity 3} below. This completes the proof of \propref{P:archimedean balanced case}.
\end{proof}
\begin{lm}\label{L:combintorial identity 3}
Let $a,b$ and $n$ be non-negative integers. Suppose $a\geq n$. Then we have 
\begin{equation}\label{E:combintorial identity 3}
\sum_{j=0}^n(-1)^j\begin{pmatrix}a\\j\end{pmatrix}\begin{pmatrix}a+b+n-j\\a+b\end{pmatrix}
 =\begin{pmatrix}b+n\\b\end{pmatrix}.
\end{equation}
\end{lm}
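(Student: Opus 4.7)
The plan is to prove the identity via a generating-function argument, which is the cleanest route to such binomial identities. I would start from the trivial factorization
\[
(1-x)^{a}\cdot (1-x)^{-(a+b+1)}=(1-x)^{-(b+1)}
\]
viewed as an identity of formal power series in $x$, and then compare the coefficients of $x^{n}$ on both sides. The right-hand side has the familiar expansion $(1-x)^{-(b+1)}=\sum_{n\ge 0}\binom{b+n}{b}x^{n}$, so the coefficient of $x^{n}$ is exactly the right-hand side of \eqref{E:combintorial identity 3}.

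For the left-hand side, I would use the two expansions
\[
(1-x)^{a}=\sum_{j=0}^{a}(-1)^{j}\binom{a}{j}x^{j},\qquad
(1-x)^{-(a+b+1)}=\sum_{k\ge 0}\binom{a+b+k}{a+b}x^{k},
\]
and multiply. The coefficient of $x^{n}$ in the product is
\[
\sum_{j=0}^{\min(a,n)}(-1)^{j}\binom{a}{j}\binom{a+b+n-j}{a+b}.
\]
Under the assumption $a\ge n$ in the lemma, the upper limit $\min(a,n)$ collapses to $n$, which matches the summation in \eqref{E:combintorial identity 3}; equating coefficients completes the proof.

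There is essentially no obstacle here: the argument is a one-line Cauchy-product identity, the hypothesis $a\ge n$ is used solely to justify replacing $\min(a,n)$ by $n$ in the upper summation bound, and no convergence issue arises because everything is a manipulation of formal power series. As an alternative, one could prove the identity by induction on $n$ using the Pascal recursion $\binom{a+b+n-j}{a+b}=\binom{a+b+n-1-j}{a+b}+\binom{a+b+n-1-j}{a+b-1}$ together with $\binom{a}{j}=\binom{a-1}{j}+\binom{a-1}{j-1}$, but the generating-function proof is shorter and more transparent.
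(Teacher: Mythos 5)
Your proof is correct, and it is essentially the same coefficient-extraction argument as the paper's: the paper expands $f(X)=\sum_{j=0}^a(-1)^j\binom{a}{j}(1+X)^{a+b+n-j}=X^a(1+X)^{b+n}$ and compares the coefficient of $X^{a+b}$, which is your power-series identity after the substitution $1+X=(1-x)^{-1}$, with the hypothesis $a\ge n$ playing the same role of ensuring the surviving terms run exactly over $0\le j\le n$. No issues with your argument.
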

\begin{proof}
Consider the function
\[
f(X)
=
\sum_{j=0}^a(-1)^j\begin{pmatrix}a\\j\end{pmatrix}(1+X)^{a+b+n-j}
\]
where $X$ is a variable. 
Since we have assumed $a\geq n$, the coefficient of the term $X^{a+b}$ in $f(X)$ is the
right hand side of \eqref{E:combintorial identity 3}. 
On the other hand, we have
\begin{align*}
f(X)
&=
\sum_{j=0}^a(-1)^j\begin{pmatrix}a\\j\end{pmatrix}(1+X)^{a+b+n-j}\\
&=
(1+X)^{a+b+n}\sum_{j=0}^a(-1)^j\begin{pmatrix}a\\j\end{pmatrix}(1+X)^{-j}\\
&=
(1+X)^{a+b+n}(1-(1+X)^{-1})^a=X^a(1+X)^{b+n}.
\end{align*}
Its clear that the coefficient of the term $X^{a+b}$ in $X^a(1+X)^{b+n}$ is equal to the 
left hand side of \eqref{E:combintorial identity 3}. This finishes the proof of lemma.
\end{proof}
\subsection{The non-archimedean case}
Let $F$ be a non-archimedean local field and $D$ is the quaternion division algebra over $F$. Recall that we have assumed 
\[
{\rm Hom}_{D^{\x}}(\itPi,\C)\neq\stt{0}.
\]
According to the results of Prasad \cite{Prasad1990}, \cite{Prasad1992} and our assumption on 
$\itPi$, this happens precisely for the cases being considered in the following proposition.
 
\begin{prop}\label{E:local integral divison: non-archimedean case}
Let $\nu_D:D^{\x}\to\mathbb{G}_m$ be the reduced norm of $D$.
\begin{itemize}
\item[(1)] 
Let $E=F\x F\x F$. If $\pi_j=\chi_j\circ\nu_D$, where $\chi_j$ is a unramified quadratic 
character of $F^{\x}$ with $\chi_1\chi_2\chi_3(\varpi)=1$. Then we have
\[
I^*(\itPi,\bft)
=
2(1-q^{-1})^2
\] 
\item[(2)] 
Let $E=K\x F$ and $\pi=\chi\circ\nu_D$, where $\chi$ is a unramified quadratic character of $F^{\x}$.
Then we have
\[
I^*(\itPi,\bft)
=
\begin{cases}
1&\text{if $\pi'$ is spherical and $K/F$ is unramified},\\
2&\text{if $\pi'$ is spherical, $\chi(\varpi)=1$ and $K/F$ is ramified}\\
2(1+q^{-2})&\text{if $\pi'={\rm St}_K\ot\chi'$ with $\chi'\chi(\varpi)=-1$}.
\end{cases}
\] 
Here $\chi'$ is a unramified quadratic character of $K^{\x}$.
\item[(3)]
Let $E$ be a field. If $\itPi$ is the trivial character of $D^{\x}(E)$, then we have
\[
I^*(\itPi,\bft)
=
\begin{cases}
2(1+q^{-1}+q^{-2})&\text{if $E/F$ is unramified},\\
2&\text{if $E/F$ is ramified}.
\end{cases}
\]
\end{itemize}
Here $I^*\left(\itPi,\bft\right)$ is the local zeta integral in  \defref{D: definition of zeta integral}. 
\end{prop}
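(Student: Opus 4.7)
The approach is to exploit the decomposition \eqref{E:decomposition of division modulo center}, $F^\times \backslash D^\times(F) = (\cO_F^\times \backslash \cO_D^\times) \sqcup \varpi_D(\cO_F^\times \backslash \cO_D^\times)$, each piece of volume $1$ by \subsecref{SS:measure for division algebra}. Since $\bft = 1$ and $\cJ = 1$ in every sub-case under consideration, $I(\itPi,\bft)$ collapses to the two-term sum $\Phi_\itPi(1) + \Phi_\itPi(\varpi_D)$, where $\Phi_\itPi(h) = \cB_\itPi(\itPi(h)\phi_\itPi,\phi_\itPi)/\cB_\itPi(\phi_\itPi,\phi_\itPi)$ is the normalized matrix coefficient of the new vector. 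One uses here that $\phi_\itPi$ is $\cO_D^\times$-fixed, through the spherical or character constituents of $\itPi$, so $\Phi_\itPi$ is bi-$\cO_D^\times$-invariant.

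In cases (1) and (3) the representation $\itPi$ is itself a character of $D^\times(E)$, so $\Phi_\itPi$ is just the restriction of this character to $D^\times(F)$ via the reduced norm $\nu_D$. It is trivial on $\cO_D^\times$, and the sign assumptions in the hypothesis (equivalently, Prasad's dichotomy for non-vanishing of the trilinear form on the division side) force the value at $\varpi_D$ to equal $+1$, giving $I(\itPi,\bft) = 2$. In case (2), writing $\itPi = \pi' \boxtimes \pi$ with $\pi = \chi\circ\nu_D$ one-dimensional and $\pi'$ a $\G(K)$-representation (using that any quadratic extension $K$ splits the division quaternion $D$, so $D(K) = {\rm M}_2(K)$), the matrix coefficient factors as $\Phi_\itPi(h) = \Phi_{\pi'}(\iota(h)) \cdot \chi(\nu_D(h))$ with $\iota$ the embedding fixed in \subsecref{SS:embeddings}. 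Substituting the explicit formula for $\varpi_D$ given there and applying \lmref{L:macdonald formula} (when $\pi'$ is spherical, cases (2i)--(2ii)) or \lmref{L:mc for unramified special} (when $\pi' = {\rm St}_K \otimes \chi'$, case (2iii)) evaluates $\Phi_{\pi'}(\iota(\varpi_D))$ directly; in the Steinberg case $\iota(\varpi_D)$ sits in the Bruhat class of $\begin{pmatrix} 0 & 1 \\ 1 & 0 \end{pmatrix}\begin{pmatrix} \varpi & 0 \\ 0 & 1 \end{pmatrix}$, so \lmref{L:mc for unramified special} applies with $n=1$.

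Finally, multiplying by $\zeta_F(2)/\zeta_E(2) \cdot L(1,\itPi',\Ad)/L(1/2,\itPi',r)$, where $\itPi'$ is the Jacquet--Langlands lift (a triple of Steinberg twists in case (1); $\pi' \boxtimes ({\rm St}_F \otimes \chi)$ in case (2); a Steinberg representation of $\G(E)$ in case (3)), yields the stated values. The main obstacle is the case-by-case $L$-factor bookkeeping: each sub-case contains a slightly different collection of unramified/ramified quadratic factors, and one must verify that these combine correctly with the value $I(\itPi,\bft) \in \{1,2\}$ to produce the claimed closed forms. Beyond this routine but delicate computation, the only conceptual subtlety is locating $\iota(\varpi_D)$ in the Iwahori--Bruhat structure of $\G(K)$ for case (2iii), which is immediate once one uses the explicit formula for $\varpi_D$ in \subsecref{SS:embeddings}.
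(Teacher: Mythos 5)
Your proposal follows essentially the same route as the paper's proof: decompose $F^{\x}\backslash D^{\x}(F)$ into the two cosets of volume one, evaluate the matrix coefficient at $1$ and at $\varpi_D$ through the fixed embedding (\lmref{L:macdonald formula} when $\pi'$ is spherical, \lmref{L:mc for unramified special} with $n=1$ in the Steinberg sub-case, where the paper also records the needed observation that $\iota(\cO_D^{\x})$ lies in the Iwahori subgroup of $\G(\cF)$), and then carry out the $L$-factor normalization. One small correction to your closing summary: in the sub-case where $\pi'$ is spherical and $K/F$ is unramified, $I(\itPi,\bft)$ is not in $\stt{1,2}$ but equals $(1+q^{-2})^{-1}(1+\chi(\varpi)\alpha q^{-1})(1+\chi(\varpi)\alpha^{-1}q^{-1})$, and this dependence on the Satake parameter $\alpha$ only disappears after dividing by $L(1/2,\itPi',r)$ in the passage to $I^*(\itPi,\bft)$.
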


\begin{proof}
We first treat $(1)$. Since $\chi_1\chi_2\chi_3(\varpi)=1$, we have
\[
I\left(\itPi,\bft\right)
=
\int_{F^{\x}\backslash D^{\x}(F)}\chi_1\chi_2\chi_3(\nu_D(h))dh
=
{\rm Vol}(F^{\x}\backslash D^{\x}(F), dh)
=
2.
\]
The $L$-factor is 
\[
L(s,\itPi',r)
=
(1-\chi_1\chi_2\chi_3(\varpi) q^{-s-1/2})^{-2}(1-\chi_1\chi_2\chi_3(\varpi) q^{-s-3/2})^{-1},
\]
where $\itPi'$ is the Jacquet-Langlands lift of $\itPi$ to $\G(F)$. This shows $(1)$.

We proceed to show $(2)$. Suppose $\pi'$ is spherical. then by \lmref{L:macdonald formula} and 
\eqref{E:decomposition of division modulo center}, we find that
\begin{align*}
I\left(\itPi,\bft\right)
&=
\int_{F^{\x}\backslash D^{\x}(F)}
\Phi_{\pi'}(h)\pi(h)dh
=
1
+
\Phi_{\pi'}(\varpi_D)\chi(\varpi)\\
&=
\begin{cases}
(1+q^{-2})^{-1}(1+\chi(\varpi)\alpha q^{-1})(1+\chi(\varpi)\alpha^{-1}q^{-1})
\quad&\text{if $K/F$ is unramified},\\
2
\quad&\text{if $K/F$ is ramified}.    
\end{cases}
\end{align*}

Suppose $\pi'={\rm St}_K\ot\chi'$. In this case,
\[
I\left(\itPi,\bft\right)
=
\int_{F^{\x}\backslash D^{\x}(F)}
\Phi_{\pi'}(h)\pi(h)dh
=
1+\Phi_{\pi'}(\varpi_D)\chi(\varpi)
=
(1-\chi'\chi(\varpi))
=
2.
\]
Here we use \lmref{L:mc for unramified special} and the observation that 
$\cO_D^{\x}$ is contained in the Iwahori subgroup of $\G(\cF)$.
The $L$-factors are given in \propref{P:(2,1) with 1 spherical} and
\propref{P:(2,1) with 1 unramified special}. This shows $(2)$.

For the case $(3)$, we have
\[
I(\itPi,\bft)
=
{\rm Vol}\left(F^{\x}\backslash D^{\x}(F), dh\right)=2.
\]
The $L$-factors are given in \propref{P:(3,0) case}. This completes the proof.
\end{proof}
  
\section{Explicit central value formulae and algebraicity for triple product}
\label{S:explicit central value formula}
The purpose of this section is to give explicit central value formulae for the triple product 
$L$-functions by combining Ichino's formula \cite[Theorem 1.1 and Remark 1.3]{Ichino2008} with the local calculations in the previous sections. We use these formulae to prove the algebraicity of the central values. 

Since the work of  Garrett \cite{Garrett1987}, special values of triple product $L$-functions have been studied extensively by many people such as Orloff \cite{Orloff1987}, Satoh \cite{Satoh1987},
Harris and Kudla \cite{HarrisKudla1991}, Garrett and Harris \cite{GarrettHarris1993},
Gross and Kudla \cite{GrossKudla1992}, Bocherer and Schulze-Pillot \cite{BochererSchulze1996},
Furusawa and Morimoto \cite{Furusawa2014}, \cite{Furusawa2016}. 

\subsection{Notation}
We fix some notations here. If $F$ is a number field, let $\cO_F$ be its ring of integers,
$\cD_F$ be its absolute discriminant, and $h_F$ be its class number. 
Let $\A$ be the ring of adeles of $\Q$ and $\wh{\Z}=\prod_p\Z_p$ be the profinite completion of $\Z$. 
We will denote by $v$ a place of $\Q$ and by $p$ a finite prime of $\Q$.
If $R$ is a $\Q$-algebra, let $\A_R=\A\ot_\Q R$ and $R_v=R\ot_\Q \Q_v$. 
For an abelian group $M$, let $\wh{M}=M\ot_\Z\wh{\Z}$.

We fix an additive character $\psi=\prod_v\psi_{v}:\Q\backslash\A\to\C^{\x}$ defined by 
$\psi_{\infty}(x)=e^{2\pi \sqrt{-1}x}$ for $x \in \R$, and $\psi_{p}(x)=e^{-2\pi \sqrt{-1}x}$ for 
$x \in \Z[p^{-1}]$. 
\subsection{Modular forms and Automoprhic forms}
\label{SS:modular forms and automorphic forms}
We briefly review the definitions of modular forms and automorphic forms on certain quaternion 
algebras, and we write down an explicit correspondence between them. We follow the exposition of 
\cite[section 1]{Shimura1981}, but with some modifications, so that it will be suitable for our
application here. 

We first introduce some notations.
Let $d\geq 1$ be an integer and $\frak{H}^d$ be the $d$-fold product of the upper half complex 
plane $\frak{H}$. Let ${\rm GL}^+_2(\R)$ be the identity connect component of $\G(\R)$.
If $d=1$, we let $h\in{\rm GL}^+_2(\R)$ acting on $z\in\frak{H}$ and
we define the factor $J(h,z)$ by 
\begin{align*}
h\cdot z&=\frac{az+b}{cz+d},\\
J(h,z)&={\rm det}(g)^{-\frac{1}{2}}(cz+d)
\quad h=\pMX abcd.
\end{align*}
In general, we let ${\rm GL}^+_2(\R)^d$ acting on $\frak{H}^d$ component-wise. If 
$\ul{k}=(k_1,\ldots,k_d)\in\Z^d$, we put 
\[
J(h,z)^{\ul{k}}
=
\prod_{j=1}^d j(h_j, z_j)^{k_j}
\quad
\text{for}
\quad
h=(h_1,\ldots,h_d)\in{\rm GL}^+_2(\R)^d
\quad
\text{and}
\quad 
z=(z_1,\ldots,z_d)\in\frak{H}^d.
\]

Let $C^\infty(\frak{H})$ be the space of $\C$-valued smooth functions on $\frak{H}$. Let $k$ be an 
integer. Recall the Maass-Shimura differential operators 
$\delta_k$ and $\varepsilon$ on $C^\infty(\frak{H})$ are given by
\[
\delta_k
=
\frac{1}{2\pi \sqrt{-1}}\left (\frac{\partial}{\partial z}+\frac{k}{2\sqrt{-1}y} \right)
\quad\text{and}\quad
\varepsilon
=
-\frac{1}{2\pi \sqrt{-1}}y^2\frac{\partial}{\partial\b{z}}
\quad y={\rm Im}(z)
\]
(\cf \cite[page 310]{Hida1993}). If $m\geq 0$ is an integer, we put
$\delta_k^m=\delta_{k+2m-2}\cdots\delta_{k+2}\delta_k$. In general, if 
$\ul{k}=(k_1,\ldots,k_d),\,\ul{m}=(m_1,\ldots,m_d)\in\Z^d$ with $m_j\geq 0$ for 
$1\leq j\leq d$, we let $\delta_{\ul{k}}^{\ul{m}}$ and $\varepsilon^{\ul{m}}$ be given
by
\[
\delta_{\ul{k}}^{\ul{m}}=(\delta_{k_1}^{m_1},\ldots,\delta_{k_d}^{m_d})
\quad\text{and}\quad
\varepsilon^{\ul{m}}=(\varepsilon^{m_1},\ldots,\varepsilon^{m_d}),
\]
and acting on $f\in C^\infty(\frak{H}^d)$ coordinate-wise.

Let $F$ be a totally real number field over $\Q$ with degree $d=[F:\Q]$. 
Let $\Sigma_F:={\rm Hom}_\Q(F,\C)$ and $\frak{H}^{\Sigma_F}$ be the $d$-fold product of $\frak{H}$. 
Let $D$ be a quaternion algebra over $F$. Let $G=D^{\x}$ viewed as an algebraic group defined over 
$F$. For any $F$-algebra $L$, $G(L)=(D\ot_F L)^\x$. We assume $D$ is either totally indefinite or totally definite. In other words, we assume either  
$G(F_\infty)\cong\G(\R)^{\Sigma_F}$ or $G(F_\infty)\cong(\mathbf{H}^{\x})^{\Sigma_F}$, where $\bfH$ is the Hamiltonian quaternion algebra.

\subsubsection{The totally indefinite case}
Let $\ul{k}=(k_\sigma)_{\sigma\in\Sigma_F}, \ul{m}=(m_\sigma)_{\sigma\in\Sigma_F}
\in\Z^{\Sigma_F}$ with $k_\sigma>0$ and $m_\sigma\geq 0$ for all $\sigma\in\Sigma_F$. The zero and 
the identity element $\Z^{\Sigma_F}$ will be denoted by $\ul{0}$ and $\ul{1}$, respectively. 
Let $U\subset G(\wh{F})$ be an open compact subgroup. We assume $\nu_D(U)=\wh{\cO}^{\x}_F$, where 
$\nu_D$ is the reduced norm of $D$ and we extend it to a map on $D\ot_F\wh{F}$ in an obvious way.

Denote by $\cN_{\ul{k}}^{[\ul{m}]}(D,F;U)$ the space of functions 
$f:\frak{H}^{\Sigma_F}\x G(\wh{F})\to\C$ such that $f(z,ahu)=f(z,h)$ for 
$z\in\frak{H}^{\Sigma_F}$ and $(a,h,u)\in\wh{F}^{\x}\x G(\wh{F})\x U$.
Also, for each $h\in G(\wh{F})$, the function 
$f_h(z):=f(z,h)\in C^{\infty}(\frak{H}^{\Sigma_F})$ is slowly 
increasing and $\varepsilon^{\ul{m}+\ul{1}}f_h=0$. 
Finally, it satisfies the following automorphy condition:
\begin{equation}\label{E:automorphy}
f_h(\gamma\cdot z)J(\gamma,z)^{-\ul{k}}=f_h(z),
\quad
\gamma\in G(F)\cap\left(G^+(F_\infty)\x hUh^{-1}\right),
\end{equation}
where $G^+(F_\infty)$ is the
identity connect component of $G(F_\infty)$. We put
$\cN_{\ul{k}}(D,F;U)=\cup_{\ul{m}}\cN_{\ul{k}}^{[\ul{m}]}(D,F;U)$. 
Notice that if $f\in\cN_{\ul{k}}(D,F;U)$, then 
$\delta_{\ul{k}}^{\ul{m}}f\in\cN_{\ul{k}+2\ul{m}}(D,F;U)$ 
(Cf. \cite[page 312]{Hida1993}). Assume $D={\rm M}_2$ is the matrix algebra. 
Let $\frak{n}\subset\cO_F$ be an ideal. Put
\[
K_0(\wh{\frak{n}})
=
\stt{\pMX abcd\in\G(\wh{\cO}_F)\mid c\in\wh{\frak{n}}}.
\]
Then $\cN_{\ul{k}}^{[\ul{0}]}({\rm M}_2,F;K_0(\wh{\frak{n}}))
=\cM_{\ul{k}}({\rm M}_2,F;K_0(\wh{\frak{n}}))$ is the space of holomorphic Hilbert modular forms of 
$F$ of weight $\ul{k}$ and level $\frak{n}$. Let $\cS_{\ul{k}}({\rm M}_2,F;K_0(\wh{\frak{n}}))$ 
be the subspace of holomorphic cusp forms in $\cM_{\ul{k}}({\rm M}_2,F;K_0(\wh{\frak{n}}))$.  

We also define a subspace of automorphic forms on $G(\A_F)$ as follows. 
Let $\ul{k}$ and $U$ be as above. We identify $U$ and $G(F_\infty)$ with subgroups 
of $G(\A_F)$ in an obvious way. Let $\cA_{\ul{k}}(D,F;U)$ be the space of automorphic forms 
$\mathbf{f}:G(\A_F)\to\C$  (Cf. \cite[section 4]{BorelJacquet1979}) such that 
\begin{align*}
\mathbf{f}(a\gamma hk(\ul{\theta})u)
&=
\mathbf{f}(h)e^{\sqrt{-1}\ul{k}\cdot\ul{\theta}},
\quad 
\ul{k}\cdot\ul{\theta}=\sum_{\sigma\in\Sigma_F}k_\sigma\theta_\sigma,
\\
(a\in\A^{\x}_F,\,\,\gamma\in G(F),\,\,\ul{\theta}=(\theta_\sigma)_{\sigma\in\Sigma_F},\,\,
k(\ul{\theta})&=(k(\theta_\sigma))_{\sigma\in\Sigma_F},\,\,
k(\theta_\sigma)
=
\pMX{{\rm cos}\theta_\sigma}{{\rm sin}\theta_\sigma}
{-{\rm sin}\theta_\sigma}{{\rm cos}\theta_\sigma},\,\, 
u\in U).
\end{align*}
Denote by $\cA^0_{\ul{k}}(D,F;U)$ the subspace of cusp forms in $\cA_{\ul{k}}(D,F;U)$ 
(if there is no such, we let $\cA^0_{\ul{k}}(D,F;U)$ be the whole space).
Suppose $F=\Q$. Let $\t{V}_{\pm}:\cA_k(D,F;U)\to\cA_{k\pm 2}(D,F;U)$ be the normalized weight 
raising/lowing elements (\cite[page 165]{JLbook}) given by
\[
\t{V}_{\pm}
=
-\frac{1}{8\pi}
\left(
\begin{bmatrix} 1&0\\0&-1 \end{bmatrix}\otimes 1\pm
\begin{bmatrix} 0&1\\1&0 \end{bmatrix}\otimes \sqrt{-1}
\right)\in{\rm Lie}(\G(\R))\ot_\R\C.
\]
In general, we have 
$\t{V}_{\pm}^{\ul{m}}:\cA_{\ul{k}}(D,F;U)\to\cA_{\ul{k}\pm 2\ul{m}}(D,F;U)$, where 
$\t{V}_{\pm}^{\ul{m}}=(\t{V}_\pm^{m_\sigma})_{\sigma\in\Sigma_F}$ acts on the archimedean
component of $\mathbf{f}\in\cA_{\ul{k}}(D,F;U)$ coordinate-wisely.

We write down an explicit correspondence between the spaces $\cN_{\ul{k}}(D,F;U)$ and 
$\cA_{\ul{k}}(D,F;U)$. Fix a set of representatives $\stt{x_1,\cdots,x_h}$ for the double cosets
$G(F)\backslash G(\A_F)/G^+(F_\infty)U$. Then 
\[
G(\A_F)=\amalg_{j=1}^h G(F)x_jG^+(F_\infty)U
\]
is a disjoint union. We may assume every archimedean component of $x_j$ is one for $1\leq j\leq r$,
and we regard $x_j$ as elements in $G(\wh{F})$. For each $f\in 
\cN_{\ul{k}}(D,F;U)$, we define $\mathit{\Phi}(f)\in\cA_{\ul{k}}(D,F;U)$ \emph{the adelic lift} of $f$ by the formulae  
\begin{align*}
\mathit{\Phi}(f)(\gamma x_j h_\infty u)
&=
f_{x_j}(h_\infty\cdot\mathbf{i})J(h_\infty,\mathbf{i})^{-\ul{k}},
\quad
\mathbf{i}=(\sqrt{-1},\cdots,\sqrt{-1})\in\frak{H}^{\Sigma_F},
\\
(&\gamma\in G(F),\,\,h_\infty\in G^+(F_\infty),\,\,u\in U,\, 
\,1\leq j\leq h).
\end{align*}
Conversely, we can recover $f$ form $\it{\Phi}(f)$ by setting
\[
f(z,h)
=
\it{\Phi}(f)(h_\infty h)J(h_\infty,\mathbf{i})^{\ul{k}},
\quad 
h_\infty\in G^+(F_\infty)
\,\,\text{with}\,\,
h_\infty\cdot\mathbf{i}=z.
\]
The weight raising/lowering operators are the adelic version of the Maass-Shimura differential
operators $\delta_{\ul{k}}^{\ul{m}}$ and $\varepsilon^{\ul{m}}$ on the space of automorphic
forms. More precisely, one check that
\begin{equation}\label{E:raising relation}
\t{V}_+^{\ul{k}}\mathit{\Phi}(f)
=
\mathit{\Phi}(\delta_{\ul{k}}^{\ul{m}}f)
\quad\text{and}\quad
\t{V}_-^{\ul{m}}\mathit{\Phi}(f)
=
\mathit{\Phi}(\varepsilon^{\ul{m}} f).
\end{equation}
In particular, $f$ is holomorphic if and only if $\t{V}_-^{\ul{1}}\mathit{\Phi}(f)=0$.

\subsubsection{$D$ is totally definite}
Let $\ul{k}=(k_\sigma)_{\sigma\in\Sigma_F}$ and $U$ be as above. We assume 
$k_\sigma\geq 2$ for all $\sigma\in\Sigma_F$. We identify $G(F_\infty)$ with 
$(\mathbf{H}^{\x})^{\Sigma_F}\subset\G(\C)^{\Sigma_F}$. Let $(\rho_{k_\sigma},\cL_{k_\sigma}(\C))$
be the $(k_\sigma-1)$-dimensional irreducible representation of $\mathbf{H}^{\x}$, and 
$\langle\cdot,\cdot\rangle_{k_\sigma}$ be the bilinear pairing on 
$\cL_{k_\sigma}(\C)$ defined in \subsecref{SS:balanced case}, respectively. We form an irreducible
representation $(\rho_{\ul{k}},\cL_{\ul{k}}(\C))$ of $G(F_\infty)$ by setting 
\[
\rho_{\ul{k}}
=
\bt_{\sigma\in\Sigma_F}\rho_{k_\sigma}
\quad\text{and}\quad
\cL_{\ul{k}}(\C)
=
\ot_{\sigma\in\Sigma_F}\cL_{k_\sigma}(\C).
\]
Then
$\langle\cdot,\cdot\rangle_{\ul{k}}=\ot_{\sigma\in\Sigma_F}\langle\cdot,\cdot\rangle_{k_\sigma}$
defines a bilinear pairing on $\cL_{\ul{k}}(\C)$.

Let $\cM_{\ul{k}}(D,F;U)$ be the space of $\cL_{\ul{k}}(\C)$-valued atomorphic forms of type 
$\rho_{\ul{k}}$, which consists of functions $f:G(\A_F)\to\cL_{\ul{k}}(\C)$ such that
\begin{align*}
f(a\gamma h h_\infty u)&=\rho_{\ul{k}}(h_\infty)^{-1}f(h),\\
(a\in\A_F^{\x},\,\,\gamma\in G(F),\,\,&h_\infty\in G(F_\infty),\,\,u\in U)
\end{align*}

Let $\cA(G(\A_F))$ be the space of $\C$-valued automorphic forms on $G(\A_F)$ 
(Cf. \cite[section 4]{BorelJacquet1979}). For $\mathbf{v}\in\cL_{\ul{k}}(\C)$ and 
$f\in\cM_{\ul{k}}(D,F;U)$, we define a function 
$\mathit{\Phi}(\mathbf{v}\ot f):G(F)\backslash G(\A_F)\to\C$ by
\[
\mathit{\Phi}(\mathbf{v}\ot f)(h)
=
\langle\mathbf{v},f(h)\rangle_{\ul{k}}.
\]
Then the map $\mathbf{v}\ot f\mapsto \mathit{\Phi}(\mathbf{v}\ot f)$ gives rise to a 
$G(F_\infty)$-equivalent morphism $\cL_{\ul{k}}(\C)\to\cA(G(\A_F))$ for every 
$f\in\cM_{\ul{k}}(D,F;U)$. Let $\cA_{\ul{k}}(D,F;U)$ the subspace of $\cA(G(\A_F))$, consisting of
functions $\mathit{\Phi}(\mathbf{v}\ot f):G(\A_F)\to\C$ for $\mathbf{v}\in\cL_{\ul{k}}(\C)$ and 
$f\in\cM_{\ul{k}}(D,F;U)$.

More generally, suppose $F=F_1\x\cdots\x F_r$, where $F_j$ are totally real number fields. Let 
$D$ be a quaternion $\Q$-algebra and put $D_{F_j}=D\ot_\Q F_j$, $D_F=D\ot_\Q F$. Let 
$U_j\subset G_j(\wh{F}_j)$ be open compact subgroups, where $G_j:=D^{\x}_{F_j}$ viewed as an 
algebraic group defined over $F_j$. Let $\ul{k}_j\in\Z^{\Sigma_{F_j}}$ be sets of positive integers.
Put $U=(U_1,\ldots,U_r)$ and $\ul{k}=(\ul{k}_1,\ldots,\ul{k}_r)$. If $D$ is definite, we define
\[
\cM_{\ul{k}}(D,F;U)
=
\ot_{j=1}^r\cM_{\ul{k}_j}(D_{F_j},F_j;U_j)
\quad\text{and}\quad
\cA_{\ul{k}}(D,F;U)
=
\ot_{j=1}^r\cA_{\ul{k}_j}(D_{F_j},F_j;U_j).
\]
If $D$ is indefinite, similar definitions apply to the spaces $\cN_{\ul{k}}^{[\ul{m}]}(D,F;U)$ and 
$\cA_{\ul{k}}(D,F;U)$.

\subsection{Global settings}\label{SS:global settings}
Let $E$ be an \etale cubic $\Q$-algebra. Then $E$ is (i) $\Q\x\Q\x\Q$ three copies of $\Q$, or (ii)
$F\x\Q$, where $F$ is a quadratic extension of $\Q$, or (iii) $E$ is a field.
Let $\cO_E$ be the maximal order in $E$ and let $\cD_E$ be the absolute discriminant of $E$. Put
\begin{equation}\label{E:constant little c}
c
=
\begin{cases}
3\quad\text{if}\,\,E=\Q\x\Q\x\Q,\\
2\quad\text{if}\,\,E=F\x\Q,\\
1\quad\text{if}\,\,E\,\,\text{is a cubic extension of $\Q$}.
\end{cases} 
\end{equation}
Here $F$ is a quadratic extension over $\Q$. We assume
\begin{equation}\label{E:totally real}
E_\infty=E\ot_{\Q}\R\cong\R\x\R\x\R.
\end{equation}
In particular, $F$ is a real quadratic extension over $\Q$, and $E$ is a real cubic extension over
$\Q$ if it is a field.  

Let $\frak{n}\subset\cO_E$ be an ideal. We have
$\frak{n}=(N_1\Z,N_2\Z,N_3\Z)$ or $\frak{n}=(\frak{n}_F,N\Z)$ according to $E=\Q\x\Q\x\Q$ or 
$E=F\x\Q$, respectively. Here $N_j, N\,(j=1,2,3)$ are positive integers and $\frak{n}_F$ is an 
ideal of $\cO_F$. Let $\ul{k}=(k_1,k_2,k_3)$ be a triple of positive even integers with 
$k_j\geq 2$ for $j=1,2,3$. We put
\begin{equation}\label{E:w}
w=k_1+k_2+k_3-3.
\end{equation}
Let $f_E\in\cM_{\ul{k}}({\rm M}_2,E;K_0(\wh{\frak{n}}))$ be a normalized Hilbert newform of weight 
$\ul{k}$ and level $K_0(\wh{\frak{n}})$ (\cf. \cite[page 652]{Shimura1978}). 
More precisely, if $E=\Q\x\Q\x\Q$, then $f_E=f_1\ot f_2\ot f_3$, where 
$f_j\in\cS_{k_j}(M_1,\Q;K_0(N_j\wh{\Z}))$ is a normalized newform
of weight $k_j$ and level $K_0(N_j\wh{\Z})$. On the other hand, if $E=F\x\Q$, then 
$f_E=g_F\ot f$, where $g_F\in\cS_{(k_1,k_2)}({\rm M}_2,F;K_0(\wh{\frak{n}}_F))$ is a 
normalized Hilbert newform of weight $(k_1,k_2)$ and level $K_0(\wh{\frak{n}}_F)$, and 
$f\in\cS_{k_3}({\rm M}_2,\Q;K_0(N\wh{\Z}))$ is a normalized newform of weight $k_3$ and level 
$K_0(N\wh{\Z})$. Let $\mathbf{f}_E=\mathit{\Phi}(f_E)$ be its adelic lift to 
$\cA_{\ul{k}}({\rm M}_2,E;K_0(\wh{\frak{n}}))$. 
Let $\itPi$ be the unitary irreducible cuspidal automorphic representation of $\G(\A_E)$ generated 
by $\mathbf{f}_E$. By the tensor product theorem \cite{Flath1979}, $\itPi\cong\ot'_v\itPi_v$, where
$\itPi_v$ are irreducible admissible representations of $\G(E_v)$. 
We define the $L$-function and $\epsilon$-factor associated to $\itPi$ and $r$ as product of local 
$L$-factors and $\epsilon$-factors. That is, we put
\begin{align*}
L(s,\itPi,r)=\prod_{v}L(s,\itPi_v,r_v)
\quad 
\text{and}
\quad
\epsilon(s,\itPi,r,\psi)=\prod_{v}\epsilon(s,\itPi_v,r_v,\psi_v).
\end{align*}
Note that $L(s,\itPi,r )$ is holomorphic at $s=1/2$.

Ichino's formula relates the period integrals of triple products of certain automorphic forms 
on quaternion algebras along the diagonal cycles and the central values of triple $L$-functions.
To describe the choice of the quaternion algebra, we define the local root number 
$\epsilon(\itPi_v)\in\stt{\pm}$ associated to $\itPi_v$ for each place $v$ by the following 
condition
\[
\epsilon(\itPi_v)=1 \Leftrightarrow {\rm Hom}_*(\itPi_v,\C)\neq\stt{0},
\]  
where $*=\G(\Q_p)$ or $\left(\mathfrak{g},K\right)$ according to $v=p$ or $v=\infty$, respectively.

In the following, we assume the global root number $\epsilon(\itPi)$ associated to $\itPi$ is 
equal to $1$. Namely, we assume 
\begin{equation}\label{E:global root number}
\epsilon\left(\itPi\right):=\prod_v\epsilon(\itPi_v)=1.
\end{equation}
Notice that $\epsilon(\itPi_v)=1$ for almost all $v$ by the results of 
\cite[Theorem 1.2]{Prasad1990} and \cite[Theorem B]{Prasad1992}.
By this assumption, there is a unique quaternion $\Q$-algebra $D$ such that $D_v$ is 
the division $\Q_v$-algebra if and only if $\epsilon(\itPi_v)=-1$. 
Applying \cite[Theorem 1.2]{Prasad1990} and 
\cite[Theorem B]{Prasad1992}, we see that the Jacquet-Langlands lift 
$\itPi^{D}=\ot'_v\itPi_v^D$ of $\itPi$ to $D^{\times}(\A_E)$ exists, where $\itPi_v^D$ is a unitary 
irreducible admissible representation of $D^{\x}(E_v)$.  Moreover, by the way we chose 
$D$, the following local root number condition is satisfied:
\begin{align}\label{E:local root number condition}
\epsilon(\itPi_v)
=
\left \{ \begin{array} {ll}  1 & \mbox{if $D_v$ is the matrix algebra} , \\
-1 & \mbox{if $D_v$ is the division algebra}.
\end{array} \right .
\end{align}
Let $\Sigma_D$ be the ramification set of $D$ and $\Sigma_D^{(\infty)}\subset\Sigma_D$ be the subset 
without the infinite place. For each $v\notin\Sigma_D$, we fix an
isomorphism $\iota_v:{\rm M}_2(\Q_v)\cong D\ot_{\Q}\Q_v$ once and for all. Let $\cO_D$ be the maximal 
order of $D$ such that $D\ot_{\Z}\Z_p=\iota_p({\rm M}_2(\Z_p))$ for all $p\notin\Sigma_D$.
If $R$ is a $\Q$-algebra, we put $D(R):=D\ot_\Q R$. We introduce following three sets of places of 
$\Q$:
\begin{align}
\begin{split}
\Sigma_3
&=\stt{v\mid E_v\cong\Q_v\x\Q_v\x\Q_v},\\
\Sigma_2
&=\stt{v\mid E_v\cong K_v\x\Q_v,\,\text{for some quadratic extension $K_v$ of $\Q_v$}}, \\
\Sigma_1
&=\stt{v\mid \text{$E_v$ is a cubic extension of $\Q_v$}}.
\end{split}
\end{align}
Note that by our assumption, we have $\infty\in\Sigma_3$. Also for every $p\notin\Sigma_D$, 
the map $\iota_p$ induces isomorphisms $D(E_p)\cong{\rm M}_2(E_p)$ and 
$\cO_D\ot_\Z \cO_{E_p}\cong{\rm M}_2(\cO_{E_p})$, where $\cO_{E_p}$ is the maximal order of $E_p$.
For $v\notin\Sigma_2\cap\Sigma_D$, the canonical diagonal embedding $\Q_v \hookrightarrow E_v$ 
induces a diagonal embedding $D_v \hookrightarrow D(E_v)$. On the other hand, for each 
$p\in\Sigma_2\cap\Sigma_D$, we choose an isomorphism $D(K_p)\cong{\rm M}_2(K_p)$ so that the
embedding $D_p\hookrightarrow D(E_p)\cong{\rm M}_2(K_p)\x D_p$ is the identity map in the 
second coordinate, and is given by the one in \subsecref{SS:embeddings} for the first coordinate. 
In any case, we identify $D_v$ as subalgebras of $D(E_v)$ via these embeddings. 
Suppose $E$ is a field, we note that the finite ramification sets $\Sigma^{(\infty)}_{D(F)}$ and 
$\Sigma^{(\infty)}_{D(E)}$ of $D(F)$ and $D(E)$ are given by  
\begin{align*}
\Sigma^{(\infty)}_{D(F)}
&=
\stt{\frak{p}\subset\cO_F\,\,\text{prime ideal}
\mid 
\text{$\frak{p}$ divides $p$ for some $p\in\Sigma_3\cap\Sigma_D$}},\\
\Sigma^{(\infty)}_{D(E)}
&=
\stt{\frak{p}\subset\cO_E\,\,\text{prime ideal}
\mid 
\text{$\frak{p}$ divides $p$ for some $p\in(\Sigma_1\cup\Sigma_3)\cap\Sigma_D$}}.
\end{align*}
We put
\begin{equation}\label{E:N^_}
N^-=\prod_{p\in\Sigma^{(\infty)}_D}p
\quad
\text{and}
\quad
\frak{N}^-_F=\prod_{\frak{p}\in\Sigma^{(\infty)}_{D(F)}}\frak{p}
\quad
\text{and}
\quad
\frak{N}^-_E=\prod_{\frak{p}\in\Sigma^{(\infty)}_{D(E)}}\frak{p}.
\end{equation}

Recall that $\frak{n}$ is an ideal in $\cO_E$ and $\wh{\frak{n}}=\prod_p\frak{n}_p$ is the closure 
of $\frak{n}$ in $\wh{E}$. In the following, we further assume that 
\begin{equation}\label{E:square-free}
\text{$\frak{n}$ is square-free}.
\end{equation}
More precisely, we assume $N_1, N_2$ and $N_3$ are square-free integers if $E=\Q\x\Q\x\Q$ and 
$\frak{n}_F\subset\cO_F$, $N\in\Z$ are square-free if $E=F\x\Q$. 
Let
\begin{equation}\label{E:M}
M=\prod_{p\mid N_{\Q}^{E}(\frak{n})}p.
\end{equation}
If $L>0$ is an integer coprime to $N^-$, we denote by $R'_L$ the standard Eichler order of level $L$
contained in $\cO_D$.  Similar notation is used to indicate the standard Eichler orders of $D(F)$ 
and $D(E)$. 
We define the order $R_{\itPi^D}$ of $D(E)$ by
\[
R_{\itPi^D}
=
\begin{cases}
R'_{N_1/N^-}\x R'_{N_2/N^-}\x R'_{N_3/N^-}
\quad&\text{if $E=\Q\x\Q\x\Q$},\\
R'_{\frak{n}_F/\frak{N}_F^-}\x R'_{N/N^-}
\quad&\text{if $E=F\x\Q$},\\
R'_{\frak{n}/\frak{N}^-_E}
\quad&\text{if $E$ is a field}.
\end{cases}
\]
We mention that the divisibility of each ideals appeared in the definition of $R_{\itPi^D}$ 
follows from the results of \cite{Prasad1990} and \cite{Prasad1992}. We also define an order
$R_{M/N^-}$ of $D$, which is a twist of the standard Eichler order $R'_{M/N^-}$.  More precisely, 
for $p$ such that $p\in\Sigma_{E,2}$ with $p\mid\cD_E$ and 
$\frak{n}\cO_{E_p}=\varpi_{K_p}\cO_{K_p}\x\Z_p$, 
we require
\[
R_{M/N^-}\ot_\Z \Z_p=\pMX p001 K_0(p)\pMX{p^{-1}}{0}{0}{1}.
\]
Notice that these are precisely the places $p$ so that $E_p=K_p\x\Q_p$ with $K_p/\Q_p$ is unramified, 
and $\itPi_p^D=\itPi_p=\pi'_p\bt\pi_p$ where $\pi'_p$ (resp. $\pi_p$) is a special 
(unramified) representation of $\G(K_p)$ (resp. $\G(\Q_p)$).

To describe our formula, we need a notation.
Let $\nu(\itPi)$ be the number of prime $p$ such that
\begin{itemize}
\item
$p\in\Sigma_3$, $\itPi_p=\pi_{1,p}\bt\pi_{2,p}\bt\pi_{3,p}$ and $\pi_{j,p}$ are special 
representations of $\G(\Q_p)$ for $j=1,2,3$.
\item
$p\in\Sigma_2$, $\itPi_p=\pi'_p\bt\pi_p$ and $\pi_p'$ (resp. $\pi_p$) is a special representation
of $\G(K_p)$ (resp. $\G(\Q_p)$).
\item
$p\in\Sigma_2$, $K_p/\Q_p$ is ramified, $\itPi_p=\pi'_p\bt\pi_p$ and $\pi_p'$ (resp. $\pi_p$) 
is a unramified representation (resp. special representation) of $\G(K_p)$ (resp. $\G(\Q_p)$).
\item
$p\in\Sigma_1$ and $\itPi_p$ is a special representation of $\G(E_p)$.
\end{itemize}
\subsection{Unbalanced case}
Assume $\epsilon(\itPi_\infty)=1$ in this section. We assume without loss of generality that 
$k_3={\rm max}\stt{k_1,k_2,k_3}$. Then $\epsilon(\itPi_\infty)=1$ implies $k_3\geq k_1+k_2$.
In this case, we have 
\[
D^{\x}(E_\infty)=\G(\R)\x\G(\R)\x\G(\R)
\quad
\text{and}
\quad
\itPi^D_\infty=\itPi_\infty,
\]
is the discrete series representation of $D^{\x}(E_\infty)$ of minimal weight $\ul{k}$ and trivial
central character. Let $\cA(D^{\x}(\A_E))$ be the space of $\C$-valued automorphic forms on 
$D^{\x}(\A_E)$ and let $\cA(D^{\x}(\A_E))_{\itPi^D}$ be the underlying space of $\itPi^D$ in
$\cA(D^{\x}(\A_E))$. Put
\[
\cA_{\ul{k}}(D,E;\wh{R}^{\x}_{\itPi^D})[\itPi^D]
=
\cA_{\ul{k}}(D,E;\wh{R}^{\x}_{\itPi^D})\cap\cA(D^{\x}(\A_E))_{\itPi^D}.
\]
By the multiplicity one theorem and the theory of newform, we have
\[
\cA_{\ul{k}}(D,E;\wh{R}^{\x}_{\itPi^D})[\itPi^D]=\C\,\bff^D_E,
\]
for some non-zero element $\bff^D_E\in\itPi^D$. We normalize $\bff^D_E$ in the following way.
Consider $(\bff^D_E)^*(h)=\ol{\bff^D_E(h\tau_\infty)}$ for $h\in D^{\x}(\A_E)$, where
\[
\tau_\infty=\left(\pMX{-1}{0}{0}{1},\pMX{-1}{0}{0}{1},\pMX{-1}{0}{0}{1}\right)\in D^{\x}(E_\infty).
\]
Since $\itPi^D$ is unitary and self-contragredient, we have 
$\b{\itPi}^D\cong\t{\itPi}^D\cong\itPi^D$, where $\b{\itPi}^D$ is the conjugate representation of 
$\itPi^D$. The multiplicity then one theorem implies $(\bff_E^D)^*\in\itPi^D$. 
By the theory of newform, there exists a non-zero constant 
$\alpha$ such that $\bff^D_E(h)=\alpha\cdot\ol{\bff^D_E(h\tau_\infty)}$ for all $h\in D^{\x}(\A_E)$.
Since $((\bff^D_E)^*)^*=\bff^D_E$, we see that $\alpha\b{\alpha}=1$. As $\alpha$ can be written as
$\b{\beta}/\beta$ for some $\beta\in\C^{\x}$, we may assume 
\begin{equation}\label{E:normalization of f^D_E}
\bff^D_E(h\tau_\infty)=\ol{\bff^D_E(h)}
\quad
\text{for all}
\quad
h\in D^{\x}(\A_E).
\end{equation}
Of course this normalization is only up to a non-zero real number, but its enough for us.

Let $f^D_E\in\cN_{\ul{k}}(D,E;\wh{R}^{\x}_{\itPi^D})$ so that $\mathit{\Phi}(f_E^D)=\bff^D_E$.
We define the norm $\langle f^D_E, f^D_E\rangle_{\wh{R}^{\x}_{\itPi^D}}$ of $f^D_E$ as follows. 
Fix a set of representatives $\stt{x_1,\cdots,x_r}$ for the double cosets 
$D^{\x}(E)\backslash D^{\x}(\A_E)/D^{\x}(E_\infty)^+\wh{R}^{\x}_{\itPi^D}$, where
$D^{\x}(E_\infty)^+$ is the three-fold product of ${\rm GL}_2^+(\R)$. We may assume every 
archimedean component of $x_j$ is one for $1\leq j\leq r$. Let 
\[
\Gamma_j=D^{\x}(E)\cap\left(D^{\x}(E_\infty)^+\x x_j\wh{R}^{\x}_{\itPi^D}x_j^{-1}\right),
\quad
1\leq j\leq r.
\]
The functions $f^D_{E,x_j}:\frak{H}^3\to\C$ satisfy the automorphy condition \eqref{E:automorphy} 
for $\gamma\in\Gamma_j$. We define
\[
\langle f^D_E,f^D_E\rangle_{\wh{R}^{\x}_{\itPi^D}}
=
\sum_{j=1}^r
\int_{\Gamma_j\backslash\frak{H}^3}
|f^D_{E,x_j}(z)|^2{\rm Im}(z)^{\ul{k}}d\mu(z),
\quad
z=(z_1,z_2,z_3)\in\frak{H}^3,\,\,
{\rm Im}(z)^{\ul{k}}=\prod_{\ell=1}^3{\rm Im}(z_\ell)^{k_\ell}.
\]
The measure $d\mu(z)$ on $\frak{H}^3$ is given by 
\[
d\mu(z)=\prod_{\ell=1}^3 y_\ell^{-2}dx_\ell dy_\ell
\quad\quad
(z_\ell=x_\ell+iy_\ell,\quad 1\leq\ell\leq 3),
\]
where $dx_\ell$ and $dy_\ell$ are the usual Lebesgue measures on $\R$. Clearly, 
$\langle f^D_E,f^D_E\rangle_{\wh{R}^{\x}_{\itPi^D}}$ is independent of the choice of the set 
$\stt{x_1,\cdots,x_r}$. Similarly, we can define the norm 
$\langle f_E, f_E\rangle_{K_0(\wh{\frak{n}})}$ of $f_E$.

On the other hand, the Petersson norms of $\mathbf{f}_E$ and $\mathbf{f}^D_E$ are given by 
\[
\int_{\A_E^{\x}\G(E)\backslash \G(\A_E)}
|\mathbf{f}_E(h)|^2 dh
\quad
\text{and}
\quad
\int_{\A_E^{\x}D^{\x}(E)\backslash D^{\x}(\A_E)}
|\mathbf{f}^D_E(h)|^2 dh,
\]
where $dh$ are the Tamagawa measures on $\A_E^{\x}\backslash\G(\A_E)$ and 
$\A_E^{\x}\backslash D^{\x}(\A_E)$, respectively.
By \cite[Lemma 6.1 and Lemma 6.3]{IchinoPrasanna}, we have
\begin{align}\label{E:norm relation}
\begin{split}
\langle f_E, f_E\rangle_{K_0(\wh{\frak{n}})} 
&= 
h_E\left[\GL_2(\wh{\cO}_E): K_0(\wh{\frak{n}})\right]
\cD_E^{3/2} \zeta_E(2) 
\int_{\A_E^{\x}\GL_2^{\x}(E)\backslash\GL_2^{\x}(\A_E)} 
|{\bf f}_E(h)|^2dh,\\
\langle f_E^D, f^D_E\rangle_{\wh{R}_{\itPi^D}^{\x}}
&= 
h_E\left[\wh{\cO}_{D(E)}^{\x}:\wh{R}_{\itPi^D}^{\x}\right] 
\prod_{p \mid N^-}(p-1) 
\prod_{p \in \Sigma_1 \cap \Sigma_D}(p-1)^2
\prod_{p \in \Sigma_3 \cap \Sigma_D,p^3 \Vert M}(p^2+p+1)\\
&\x 
\cD_E^{3/2}\zeta_E(2) 
\int_{\A_E^{\x}D^{\x}(E)\backslash D^{\x}(\A_E)} 
|{\bf f}_E^D(h)|^2dh.
\end{split}
\end{align}
Here $h_E:={}^{\sharp}\left (E^{\times}\backslash \A_E^{\x}/E_{\infty}^{\x}\wh{\cO}_E^{\x}\right)$
is the class number of $E$. We mention that
\[
dh=
8\prod_{p\mid N^-}(p-1)^{-1}
\prod_{p\in\Sigma_3\cap\Sigma_D}(p-1)^{-2}
\prod_{p \in \Sigma_1\cap\Sigma_D,p^3\Vert M}(p^2+p+1)^{-1}
\cD_E^{-3/2}\zeta_E(2)^{-1} 
\prod_{v}dh_v,
\]
where $dh$ is the Tamagawa measures on $\A_E^{\x}\backslash D^{\x}(\A_E)$ and $dh_v$ is the Haar 
measure on $E_v^{\x} \backslash D^{\x}(E_v)$ defined in \subsecref{SS:measure for matrix algebra} 
and \subsecref{SS:measure for division algebra} for each place $v$ of $\Q$.

\begin{lm}\label{L:adelic Petersson inner product for GL_2}
We have
\begin{align*}
\langle f_E, f_E\rangle_{K_0(\wh{\frak{n}})} 
&=
2^{-k_1-k_2-k_3+c-3} h_E\cD_EN_{\Q}^{E}(\frak{n})\cdot L(1,\itPi,{\rm Ad}), 
\end{align*}
where $c$ is given by \eqref{E:constant little c}.
\end{lm}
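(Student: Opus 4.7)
The plan is to start from the identity \eqref{E:norm relation} and reduce the task to (i) computing the index $[\GL_2(\wh{\cO}_E):K_0(\wh{\frak{n}})]$ and (ii) evaluating the adelic Petersson inner product $\int_{\A_E^{\x}\GL_2(E)\backslash\GL_2(\A_E)}|{\bf f}_E(h)|^2\,dh$ against the Tamagawa measure, and then assembling the two.

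For the index, the square-free assumption on $\frak{n}$ makes the calculation trivial: the local index at each prime $\frak{p}\mid\frak{n}$ is $[\GL_2(\cO_{E_{\frak{p}}}):K_0(\frak{p})]=N_{\Q}^{E}(\frak{p})+1$, so
$$
[\GL_2(\wh{\cO}_E):K_0(\wh{\frak{n}})]=N_{\Q}^{E}(\frak{n})\prod_{\frak{p}\mid\frak{n}}\bigl(1+N_{\Q}^{E}(\frak{p})^{-1}\bigr).
$$

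For the adelic Petersson norm I would use the standard Rankin--Selberg unfolding. Consider the zeta integral $I(s)=\int|{\bf f}_E(h)|^2 E(s,h)\,dh$ against the spherical Eisenstein series $E(s,h)$ on $\GL_2(\A_E)$ with trivial central character. The residue of $E(s,h)$ at $s=1$ is a constant involving $\cD_E^{-1/2}$ and the residue of $\zeta_E(s)$ at $s=1$, while the unfolded $I(s)$ factors as $\prod_v Z_v(s,W_{\itPi_v},\overline{W}_{\itPi_v})$ against the normalized local new Whittaker functions of $\itPi$. At unramified non-archimedean places one has $Z_v(s)=L(s,\itPi_v\times\widetilde{\itPi}_v)/\zeta_{E_v}(2s)$ by the Casselman--Shalika formula. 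At the ramified (necessarily Steinberg-twist-by-unramified-quadratic-character) places dictated by the square-free conductor, the local integral can be evaluated directly from the explicit newform Whittaker formula recalled in \eqref{E:action of Weyl element for unramified special}, producing the Euler factor $\zeta_{E_v}(2)\cdot N(\frak{p})(1+N(\frak{p})^{-1})^{-1}$ that will combine with the index above to yield $N_{\Q}^{E}(\frak{n})$. At each of the three archimedean places, using the explicit weight-$k_v$ Whittaker function \eqref{E:Whittaker function for weight k} and \lmref{L:norm for weight k+2m} with $m=0$, the local zeta integral $Z_v(1)$ evaluates to $(4\pi)^{-k_v}\Gamma(k_v)$, which combines with the archimedean adjoint Gamma factor in $L(1,\itPi_\infty,\mathrm{Ad})$ to produce the overall power $2^{-k_1-k_2-k_3}$. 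Using the factorization $L(s,\itPi\times\widetilde{\itPi})=\zeta_E(s)L(s,\itPi,\mathrm{Ad})$ to isolate the adjoint $L$-value and taking the residue at $s=1$ gives
$$
\int|{\bf f}_E(h)|^2\,dh=\frac{2^{-k_1-k_2-k_3+c-3}L(1,\itPi,\mathrm{Ad})}{\cD_E^{1/2}\,\zeta_E(2)\prod_{\frak{p}\mid\frak{n}}(1+N_\Q^E(\frak{p})^{-1})}.
$$
Substituting this and the index formula into \eqref{E:norm relation} yields the claimed identity, with the denominators cancelling cleanly.

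The main obstacle is accounting precisely for the power $2^{c-3}$: the three real embeddings of $E$ group into the $c$ archimedean places of the $\Q$-algebra factors of $E$, and the comparison between the full class number $h_E$ (which is what appears in \eqref{E:norm relation}) and the narrow class number of each factor contributes one factor of $2$ per $\Q$-factor of $E_\infty$ with identity component of full volume, which is exactly $2^{3-c}$ (since $E_\infty$ has three real connected Lie-group factors but only $c$ distinct idele-class-group contributions to the narrow/wide discrepancy). Cross-checking this against the local Tamagawa volumes at infinity is the bookkeeping step one has to perform carefully. Every other step is a routine application of Rankin--Selberg unfolding and the newform computations already reviewed in Section 3.
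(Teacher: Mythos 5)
Your starting point and final assembly coincide with the paper's: the paper proves this lemma in two lines, by combining \eqref{E:norm relation} with the value of the adelic Petersson norm, which it simply quotes from Waldspurger (Proposition 6 of \cite{Wald1985}); the formula you state for $\int_{\A_E^{\x}\GL_2(E)\backslash\GL_2(\A_E)}|{\bf f}_E(h)|^2\,dh$ is exactly that quoted formula (after rewriting $[\GL_2(\wh{\cO}_E):K_0(\wh{\mathfrak{n}})]=N_{\Q}^E(\mathfrak{n})\prod_{\mathfrak{p}\mid\mathfrak{n}}(1+N_\Q^E(\mathfrak{p})^{-1})$), and substituting it into \eqref{E:norm relation} does give the lemma. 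So what you are really proposing is to reprove Waldspurger's input by Rankin--Selberg unfolding rather than cite it, and that is where the gap lies: the decisive constant $2^{c-3}$ is never actually derived. You attribute it to a comparison between $h_E$ and narrow class numbers, ``one factor of $2$ per $\Q$-factor''. That is not its source, and it is not even numerically correct in general: for a totally real field $F$ one has $h_F^+/h_F=2^{[F:\Q]-1}/[\cO_F^{\x}:\cO_F^{\x,+}]$, which depends on the signatures of units and not on the number of $\Q$-algebra factors of $E$ --- e.g.\ a real quadratic factor whose fundamental unit has norm $-1$ contributes no such factor of $2$, yet the constant $2^{c-3}$ is still there. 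Moreover, since you start from \eqref{E:norm relation} (Ichino--Prasanna, Lemmas 6.1 and 6.3), the class-number and connected-component bookkeeping is already absorbed into that identity through $h_E$; whatever constant remains must come out of the unfolding computation itself.

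Concretely, the constant has to emerge from three places you only gesture at: (i) the archimedean local integrals, where $(4\pi)^{-k_v}\Gamma(k_v)$ divided by the completed archimedean factors $\zeta_\R(\cdot)\zeta_\C(\cdot)$ occurring in $\zeta_{E_v}(2s)$ and $L(1,\itPi_v,{\rm Ad})$ leaves $2^{-k_v}$ only up to residual powers of $2$ and $\pi$; (ii) the residue at $s=1$ of the Eisenstein series over each field factor of $E$ (residue of $\zeta_{F_i}$, regulator, discriminant), which must cancel against $\cD_E^{1/2}$ and those residual constants; and (iii) the comparison of the Tamagawa measure with the product of local measures. The per-field constant $2^{1-[F_i:\Q]}$, whose product over the $c$ factors is precisely $2^{c-3}$, is exactly the content of Waldspurger's proposition, and your sketch asserts rather than establishes it; likewise the ramified local factor at the Steinberg places is stated without computation. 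Either carry out this bookkeeping in full (essentially reproving \cite[Proposition 6]{Wald1985}), or do what the paper does and cite it.
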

\begin{proof}
By specializing the formula in \cite[Proposition 6]{Wald1985}, we have
\begin{align*}
\int_{\A_E^{\times}\GL_2(E) \backslash \GL_2(\A_E)} |{\bf f}_E(h)|^2 dh
&=
2^{-k_1-k_2-k_3+c-3}\zeta_{E}(2)^{-1}\cD_E^{-1/2}N_{\Q}^{E}(\frak{n})
\left[\GL_2(\wh{\cO}_E):K_0(\wh{\frak{n}})\right]^{-1} L(1,\itPi,{\rm Ad}). 
\end{align*}
The lemma follows form combining this with the equation \eqref{E:norm relation}.
\end{proof}

For each place $v$, let ${\bf t}_v\in D^{\x}(E_v)$ be the element defined in 
\subsecref{SS:raising element} for $\itPi_v^D$ and put $\bft=\ot_v\bft_v$, 
$\h{\bft}=\otimes_p{\bf t}_p$. 
Recall $N^- = \prod_{p \in \Sigma_D}p$ and $M=\prod_{p \mid N_{\Q}^{E}(\frak{n})}p$. Let
\[
\Gamma^D_{M/N^-}
=D^{\x}(\Q)\cap\left(D^{\x}(\R)^+\x\wh{R}^{\x}_{M/N^-}\right)
\subset{\rm SL}_2(\R),
\]
which is a Fuchsian group of the first kind. Remember that $k_3\geq k_1+k_2$. Set
\[
2m=k_3-k_1-k_2.
\] 

Recall that $\nu(\itPi)$ is the non-negative integer defined in the last paragraph of 
\S\ref{SS:global settings}. 
\begin{thm}\label{T:central value formula unbalanced case}
Suppose $f^D_E$ is normalized as \eqref{E:normalization of f^D_E}.
\begin{itemize}
\item[(1)]
We have
\begin{align*}
&\left(\int_{\Gamma^D_{M/N^-}\backslash \frak{H}}(1\ot\delta_{k_2}^m\ot 1)
f_E^D((z,z,-\ol{z}),\h{\bft})y^{k_3-2}dxdy\right)^2
=
2^{-2k_3-1+\nu(\itPi)}M\cD_E^{-1/2}
\frac{\langle f_E^D,f^D_E\rangle_{\wh{R}_{\itPi^D}^{\times}}}
{\langle f_E, f_E\rangle_{K_0(\wh{\frak{n}})}}
L\left(\frac{1}{2},\itPi,r\right).
\end{align*}
\item[(2)]
The central value is non-negative, that is
\[
L\left(\frac{1}{2},\itPi,r\right)\geq 0.
\]
\end{itemize}
\end{thm}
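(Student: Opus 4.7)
The plan is to derive both assertions by applying Ichino's central value formula \cite[Theorem 1.1, Remark 1.3]{Ichino2008} to the Jacquet--Langlands transfer $\bff_E^D$ with the test vector $\itPi^D(\bft)\bff_E^D$, then substituting the local zeta integrals computed in \S\ref{S:local zeta integral: matrix algebra} and \S\ref{S:local zeta integral: division algebra}. Ichino's formula expresses
\[
\frac{|I(\bff_E^D,\bft)|^2}{\langle \bff_E^D,\bff_E^D\rangle_{\mathrm{Pet}}} \;=\; C\cdot\frac{\zeta_E(2)^2}{\zeta_\Q(2)^2}\cdot\frac{L(1/2,\itPi,r)}{L(1,\itPi,\Ad)}\cdot\prod_v I^*_v(\itPi_v^D,\bft_v),
\]
where $C$ is an explicit constant, $I(\bff_E^D,\bft) = \int_{\A^\times D^\times(\Q)\backslash D^\times(\A)}\bff_E^D(\iota(h)\bft)\,dh$ is the global diagonal trilinear period, $\iota\colon D\hookrightarrow D\otimes_\Q E$ is the diagonal embedding, and the Petersson norm is formed with respect to the Tamagawa measure.

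First, I would compute $\prod_v I_v^*$ by assembling the local values already proved: \propref{P:archimedean unbalanced case} gives $I_\infty^* = 2^{k_1+k_2-k_3+1}$, and at each finite prime the value is read off from the appropriate statement in \propref{P:known results}--\propref{P:(3,0) case} or \propref{E:local integral divison: non-archimedean case} according to the splitting type of $E_p$, the ramification of $D_p$, and the local components of $\itPi_p$. Each prime counted in $\nu(\itPi)$ contributes an extra factor of $2$ in this product, producing a cumulative $2^{\nu(\itPi)}$ alongside explicit contributions involving $M$, $N^-$, and the local orders.

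Next, I would translate the adelic period $I(\bff_E^D,\bft)$ into the classical integral on the left of (1). The finite component $\h{\bft}$ is incorporated directly. At the archimedean place, the translate by $V_+^m$ in the second coordinate becomes the Maass--Shimura operator $\delta_{k_2}^m$ on the classical side via \eqref{E:raising relation}, while $\tau_\R$ in the third coordinate produces the argument $-\ol{z}$ in that slot. Using strong approximation for $D^\times$ together with $\nu_D(\wh{R}^\times_{M/N^-}) = \wh{\Z}^\times$, the diagonal adelic quotient collapses to a single copy of $\Gamma_{M/N^-}^D\backslash\mathfrak{H}$, with the measure $y^{k_3-2}\,dx\,dy$ arising because the effective weight of the restricted integrand (after applying $\delta_{k_2}^m$ so that $k_2$ becomes $k_3-k_1$, and after pairing against the conjugated third slot) is $k_3$.

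Finally, I would invoke \lmref{L:adelic Petersson inner product for GL_2} and the second equality in \eqref{E:norm relation} to replace the Tamagawa norm $\langle\bff_E^D,\bff_E^D\rangle_{\mathrm{Pet}}$ and the $L(1,\itPi,\Ad)$ factor by the classical norms $\langle f_E^D,f_E^D\rangle_{\wh{R}_{\itPi^D}^\times}$ and $\langle f_E,f_E\rangle_{K_0(\wh{\frak n})}$, absorbing all remaining constants (powers of $2$, the class number $h_E$, $\cD_E$, $M$, and the indices $[\wh{\cO}_{D(E)}^\times:\wh{R}_{\itPi^D}^\times]$, $[\G(\wh{\cO}_E):K_0(\wh{\frak n})]$) into the overall factor. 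The main technical obstacle is carrying out this bookkeeping cleanly: one must check that the factors $p-1$, $(p-1)^2$, and $p^2+p+1$ appearing in \eqref{E:norm relation} at ramified primes cancel exactly against the local $I_v^*$ values coming from \S\ref{S:local zeta integral: matrix algebra}--\S\ref{S:local zeta integral: division algebra}, leaving only the clean expression $2^{-2k_3-1+\nu(\itPi)}M\cD_E^{-1/2}$ on the right-hand side. For part (2), the normalization \eqref{E:normalization of f^D_E} together with the invariance of $\bft$ under the involution $h\mapsto h\tau_\infty$ (up to a sign absorbed by the weight structure) shows that the classical period on the left of (1) is real; its square is therefore non-negative, and (1) forces $L(1/2,\itPi,r)\geq 0$.
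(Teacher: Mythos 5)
Your plan is correct and follows essentially the same route as the paper's proof: Ichino's formula applied to the test vector $\itPi^D(\bft)\bff_E^D$, the local values $I^*(\itPi^D_v,\bft_v)$ from \S\ref{S:local zeta integral: matrix algebra}--\S\ref{S:local zeta integral: division algebra}, the adelic-to-classical translation via \eqref{E:raising relation} (the paper packages the strong-approximation and measure bookkeeping through Lemmas 6.1 and 6.3 of \cite{IchinoPrasanna}), and the norm comparisons \eqref{E:norm relation} and \lmref{L:adelic Petersson inner product for GL_2}. The only point you leave slightly vague is the realness of the period, which the paper makes precise from the normalization \eqref{E:normalization of f^D_E} together with the effect of ${\rm Ad}\,\tau_{\R}$ on $V_+$ and the triviality of the central character, exactly as you indicate.
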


\begin{proof}
By the normalization \eqref{E:normalization of f^D_E}, we have
\[
\int_{\A^\x_E D^\x(E)\backslash D^\x(\A_E)}
\bff^D_E(h)\bff^D_E(h\tau_\infty)dh
=
\int_{\A^\x_E D^\x(E)\backslash D^\x(\A_E)}
\left|\bff^D_E(h)\right|^2dh.
\]
On the other hand, since ${\rm Ad}\tau_{\R}(V_+)=V_+-2\sqrt{-1}I_2$ and $\itPi^D$ has trivial 
central character, we also have
\begin{align*}
\int_{\A^{\x}D^{\x}\backslash D^{\x}(\A)}\itPi_{\infty}^D(\bft_\infty)\bff^D_E(h\h{\bft})dh
&=
\int_{\A^{\x}D^{\x}\backslash D^{\x}(\A)}
\itPi_{\infty}^D(\bft_\infty\tau_\infty)\ol{\bff^D_E(h\h{\bft}})dh\\
&=
\ol{\int_{\A^{\x}D^{\x}\backslash D^{\x}(\A)}
\itPi_{\infty}^D(\bft_\infty)\bff^D_E(h\h{\bft})dh}.
\end{align*}
By Ichino's formula \cite[Theorem 1.1 and Remark 1.3]{Ichino2008})and the choices of Haar measures
in \subsecref{SS:measure for matrix algebra} and \subsecref{SS:measure for division algebra}, we 
find that 
\begin{align*}
\frac{\left \vert
\int_{\A^{\x}D^{\x}\backslash D^{\x}(\A)}
\itPi^D(\bft)\bff^D_E(h)dh
\right \vert^2}
{\int_{\A^\x_E D^\x(E)\backslash D^\x(\A_E)}
\left|\bff^D_E(h)\right|^2dh}
&=
\frac{\left(
\int_{\A^{\x}D^{\x}\backslash D^{\x}(\A)}
\itPi^D(\bft)\bff^D_E(h)dh\right)^2}
{\int_{\A_E^{\x}D^{\x}(E) \backslash D^{\x}(\A_E)} {\bf f}_E^D(h){\bf f}_E(h\tau_\infty)dh}\\
&=
2^{1-c}\prod_{p \mid N^-}(p-1)^{-1}\cdot \frac{\zeta_E(2)}{\zeta_{\Q}(2)^2}\cdot
\frac{L\left(1/2,\itPi,r \right)}{L(1,\itPi,{\rm Ad})}
\cdot \prod_{v}I^*(\itPi_v^D,\bft_v),
\end{align*}
where $c$ is given by \eqref{E:constant little c}. Since $L(1,\itPi,{\rm Ad})>0$ by
\lmref{L:adelic Petersson inner product for GL_2} and $I^*(\itPi_v,\bft_v)>0$ for all $v$ by our
results in the previous sections, we see immediately that assertion $(2)$ holds.  

To drive our formula, we note that from \eqref{E:raising relation} and the definition of 
$\bft_\infty$, the function $\itPi^D({\bf t}){\bf f}_E^D$ is the adelic lift of the automorphic
function 
\begin{align*}
((z_1,z_2,z_3),h) &\mapsto (1\ot\delta_{k_2}^m\ot 1)
f_E^D((z_1,z_2,-\overline{z_3}),h\hat{\bf t}),
\quad
((z_1,z_2,z_3),h)\in\frak{H}^3\x\GL_2(\wh{E}).
\end{align*}  
Applying lemmas 6.1 and 6.3 in \cite{IchinoPrasanna}, we obtain 
\begin{align*}
\left(
\int_{\A^{\x}D^{\x}\backslash D^{\x}(\A)}
\itPi^D(\bft)\bff^D_E(h)dh
\right)^2
&=
\zeta_{\Q}(2)^{-2}\prod_{p\mid M / N^-}(1+p)^{-2}
\prod_{p \mid N^-}(p-1)^{-2}\\
&\x
\left(
\int_{\Gamma^D_{M/N^-}\backslash\frak{H}}(1\ot\delta_{k_2}^m\ot 1)
f_E^D((z,z,-\b{z}), \hat{\bf t})y^{k_3-2}dxdy  
\right)^2.
\end{align*}
The theorem follows from
combining this with \lmref{L:adelic Petersson inner product for GL_2}
and our results for $I^*(\itPi_v^D,\bft_v)$ in \secref{S:local zeta integral: matrix algebra} and
\secref{S:local zeta integral: division algebra}.
\end{proof}

Now we apply \thmref{T:central value formula unbalanced case} to prove the algebraicity of the
central critical values of the triple product $L$-functions in certain cases. We keep the notations
and assumptions in \subsecref{SS:global settings}. We further assume that $E$ is not a field.
To unify our statements, let $K=\Q\x\Q$ or $K=F$. Let $\cD_K$ be its absolute discriminant. 
We have $E=K\x\Q$. Let $\frak{n}_K=(N_1\Z,N_2\Z)$, $N_3=N$ and $g_K=f_1\ot f_2$, $f_3=f$ when 
$K=\Q\x\Q$. In any case, we have
\[
f_E=g_K\ot f
\quad
\text{and}
\quad
\frak{n}=(\frak{n}_K,N\Z).
\]
We define the motivic $L$-function and its associated completed 
$L$-function for ${g}_{\cF}\otimes f$ by
\begin{align*}
L(s,{g}_{\cF}\otimes f) 
= 
\prod_{p}L\left( s-\frac{w}{2},\itPi_p,r_p \right) 
\quad
\text{and}
\quad 
\Lambda(s,{g}_{\cF}\otimes f) 
= 
L\left ( s-\frac{w}{2},\itPi,r\right ).
\end{align*}
Recall that $w$ is given by \eqref{E:w}. Define the Petersson norm of $f$ by
\[
\langle f,f\rangle_{\Gamma_0(N)}
= 
\int_{\Gamma_0(N)\backslash\frak{H}}|f(z)|^2y^{k_3-2}dxdy,
\quad
(z=x+iy).
\]
Here $dx, dy$ are the usual Lebesgue measures on $\R$.

\begin{cor}\label{C:algebraicity for triple, unbalanced case}
Assume either $\epsilon(\itPi)=-1$, or $\epsilon(\itPi_v)=1$ for all $v$. Then for every
$\sigma\in{\rm Aut}(\C)$, we have
\begin{align*}
\left(
\frac{L((w+1)/2,g_{\cF}\otimes f)}
{\cD_{\cF}^{1/2}\pi^{2k_3}\langle f,f\rangle_{\Gamma_0(N)}^2}\right)^{\sigma}
=
\frac{L((w+1)/2,g_{\cF}^{\sigma}\otimes f^{\sigma})}
{\cD_{\cF}^{1/2}\pi^{2k_3}\langle f^{\sigma},f^{\sigma}\rangle_{\Gamma_0(N)}^2}.
\end{align*}
\end{cor}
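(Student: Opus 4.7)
The proof splits into two cases. If $\epsilon(\itPi) = -1$, the functional equation gives $L(1/2, \itPi, r) = 0$ and hence $L((w+1)/2, g_\cF \otimes f) = 0$; since each local root number $\epsilon(\itPi_v)$ is $\Aut(\C)$-invariant, the same vanishing holds after replacing $(f, g_\cF)$ by $(f^\sigma, g_\cF^\sigma)$, so both sides of the claimed identity are zero. The substantive case is $\epsilon(\itPi_v) = +1$ for every $v$; then $D = {\rm M}_2(\Q)$, $f_E^D = f_E = g_\cF \otimes f$, and Theorem \ref{T:central value formula unbalanced case} applies directly.

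In this case the Petersson-norm ratio $\langle f_E^D, f_E^D\rangle_{\widehat{R}_{\itPi^D}^{\times}} / \langle f_E, f_E\rangle_{K_0(\widehat{\frak{n}})}$ appearing in the theorem is a rational number depending only on levels, so the formula reads
\[
I^2 = q \cdot \cD_E^{-1/2} \cdot L\!\left(\tfrac{1}{2}, \itPi, r\right)
\]
for some $q \in \Q^\times$, where $I$ denotes the displayed integral on the left-hand side of the theorem. The next step is to rewrite $I$ as a Petersson pairing: after expanding $f_E = g_\cF \otimes f$ along $E = \cF \times \Q$, applying $\delta^m_{k_2}$ in the second archimedean variable of $g_\cF$, and restricting to the cycle $z \mapsto (z, z, -\overline z)$, the integrand factors as $\delta^m_{k_2,2}\,g_\cF(z, z) \cdot f(-\overline z)$. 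The first factor is the diagonal restriction $G(z)$, a nearly holomorphic cusp form of weight $k_1 + k_2 + 2m = k_3$ on a congruence subgroup, and the identity $f(-\overline z) = \overline{f^c(z)}$ (with $f^c$ the form having complex-conjugate Fourier coefficients) recasts $I$ as a constant multiple of the Petersson pairing $\langle f^c, G\rangle_{\Gamma_0(N')}$ on some level $N'$ divisible by $N$.

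Shimura's arithmeticity then applies. The renormalized form $(2\pi i)^m G$ has Fourier coefficients lying in $\cD_\cF^{1/2} \cdot \overline{\Q}$ and depending $\Aut(\C)$-equivariantly on the coefficients of $g_\cF$; Shimura's rationality theorem for Petersson pairings against an elliptic newform then yields
\[
\sigma\!\left(\frac{\langle f^c, G\rangle_{\Gamma_0(N')}}{\cD_\cF^{1/2}\,\langle f, f\rangle_{\Gamma_0(N)}}\right) = \frac{\langle (f^\sigma)^c, G^\sigma\rangle_{\Gamma_0(N')}}{\cD_\cF^{1/2}\,\langle f^\sigma, f^\sigma\rangle_{\Gamma_0(N)}}.
\]
Squaring this and combining with the archimedean identity $L(1/2, \itPi_\infty, r_\infty) = c\cdot \pi^{-2k_3}$ for some $c \in \overline{\Q}^\times$ (read off from the four $\zeta_\C$-factors constituting $L(s, \itPi_\infty, r)$), which converts the completed central value into the motivic value $L((w+1)/2, g_\cF \otimes f)$, produces the asserted Galois equivariance.

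The main obstacle is careful bookkeeping: tracking the $(2\pi i)$-powers introduced by the $m$-fold Maass-Shimura operator, verifying that the Petersson-norm ratio on the quaternion side of Theorem \ref{T:central value formula unbalanced case} is $\Aut(\C)$-equivariantly rational at the places where $\widehat{\bft}$ intervenes nontrivially, and confirming that the $\cD_\cF^{1/2}$ normalization of Hilbert modular forms propagates correctly through the diagonal restriction so that the final denominator matches the Deligne period $\cD_\cF^{1/2} \pi^{2k_3} \langle f, f\rangle_{\Gamma_0(N)}^2$.
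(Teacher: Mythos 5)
Your overall route is the paper's: in the case $\epsilon(\itPi)=-1$ one gets vanishing of both sides (the paper deduces $L(\tfrac{w+1}{2},g_{\cF}\otimes f)=0$ from \cite{HarrisKudla2004} and \cite{Prasad2008} together with the $\sigma$-invariance of the local signs from Lemma A of the appendix, rather than quoting the functional equation directly, but the conclusion is the same); and in the case $\epsilon(\itPi_v)=1$ for all $v$ one has $D={\rm M}_2$, so \thmref{T:central value formula unbalanced case} expresses $\Lambda(\tfrac{w+1}{2},g_{\cF}\otimes f)/(\cD_{\cF}^{1/2}\langle f,f\rangle^2_{\Gamma_0(N)})$ as an explicit rational multiple of the square of $\langle \iota^*((1\otimes\delta_{k_2}^m)\rho(\h{\bft})g_{\cF}),f\rangle/\langle f,f\rangle_{\Gamma_0(N)}$, and one concludes by Shimura--Sturm arithmeticity of this Petersson ratio, Galois invariance of all the local constants, and the fact that $L(1/2,\itPi_\infty,r)$ is a rational multiple of $\pi^{-2k_3}$. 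This matches the paper's argument step for step.

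However, the "bookkeeping" you defer is exactly where your write-up goes wrong, and it is not neutral. The operator $\delta_k$ in this paper is already defined as $\tfrac{1}{2\pi\sqrt{-1}}\bigl(\partial_z+\tfrac{k}{2\sqrt{-1}y}\bigr)$, so $(1\otimes\delta_{k_2}^m)\rho(\h{\bft})g_{\cF}$ and its diagonal restriction $G$ are already ${\rm Aut}(\C)$-equivariant in $g_{\cF}$; no renormalization by $(2\pi\sqrt{-1})^m$ is needed, and if you insert one the squared period acquires an extra $\pi^{2m}$ with $2m=k_3-k_1-k_2$, turning the exponent $2k_3$ in the statement into $k_1+k_2+k_3$ --- i.e.\ carried out literally, your normalization does not reproduce the asserted period. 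Likewise, $\cD_{\cF}^{1/2}$ does not enter through the Fourier coefficients of $G$ (those lie in the composite Hecke field); it comes from the constant $\cD_E^{-1/2}$ in \thmref{T:central value formula unbalanced case}, and since that formula equates $\Lambda/(\cD_{\cF}^{1/2}\langle f,f\rangle^2)$ with the \emph{square} of the equivariant Petersson ratio (times a rational constant), no separate argument about $\sigma(\cD_{\cF}^{1/2})$ is needed or available. With these two points corrected, your argument coincides with the one given in the paper.
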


\begin{proof}
First note that we have $\epsilon(\itPi^\sigma)=\epsilon(\itPi)$ for all $\sigma\in{\rm Aut}(\C)$. 
In fact, this follows from $\itPi^\sigma_\infty\cong\itPi_\infty$ and  
\lmref{L:root number is galois invariant}. Also, if $\epsilon(\itPi)=-1$, then by the results of 
\cite{HarrisKudla2004} and \cite{Prasad2008}, we have 
\[
L\left( \frac{w+1}{2},g_{\cF}\otimes f\right)=0,
\]
(see also the remark after this corollary).
As $\epsilon(\itPi^\sigma)=\epsilon(\itPi)=-1$, we conclude that
\[
L\left( \frac{w+1}{2},g_{\cF}^{\sigma}\otimes f^{\sigma}\right)
=
L\left( \frac{w+1}{2},g_{\cF}\otimes f\right) 
=
0.
\]

Now we assume $\epsilon(\itPi_v)=1$ for all $v$. Then $D={\rm M}_2$.
Let $\iota:\frak{H}\rightarrow\frak{H}^2$ be the diagonal embedding $z\mapsto (z,z)$. 
The $\GL_2(\Q_p)$ component of ${\bf t}_p \in \GL_2(\cF_p)\times \GL_2(\Q_p)$ is equal to $1$ 
for all $p$. Thus we may view $\hat{\bf t}$ as an element in $\GL_2(\wh{\cF})$.
Note that $(1\otimes \delta_{k_2}^{m})\rho(\hat{\bf t})g_{\cF}$
is a nearly holomorphic Hilbert modular form over $\cF$ of weight $(k_1,k_2+2m)$, 
where $\rho$ denote the right translation of $\GL_2(\widehat{K})$. 
Let
\[
\iota^*((1\ot\delta_{k_2}^{m})\rho(\hat{\bft})g_K)(z)
=
(1\ot\delta_{k_2}^{m})\rho(\h{\bft})g_K((z,z),1)
\] 
be its pullback along $\iota$ at the identity cusp. Then it is a 
nearly holomorphic modular form of weight $k_3$ and level $\Gamma_0(M)$.
We consider the period integral 
$\langle \iota^*((1\ot\delta_{k_2}^{m})g_K),f\rangle$ defined by
\begin{align*}
\langle \iota^*((1\ot\delta_{k_2}^{m})\rho(\hat{\bf t})g_K),f\rangle 
=
\int_{\Gamma_0(M)\backslash\frak{H}}
\iota^*((1\ot\delta_{k_2}^{m})\rho(\h{\bft})g_K)(z)\ol{f(z)}y^{k_3-2}dxdy,
\end{align*}
where $z= x+iy$ and $dx, dy$ are the usual Lebesgue measures on $\R$. Let $\sigma\in{\rm Aut}(\C)$. 
By our normalization of $g_K$, we have 
\[
(\iota^*((1\otimes \delta_{k_2}^{m})
\rho(\hat{\bf t})g_{\cF}))^{\sigma} = \iota^*((1\otimes \delta_{k_2}^{m})
\rho(\hat{\bf t})g^{\sigma}_{\cF}).
\]
Since $\iota^*((1\otimes \delta_{k_2}^{m})\rho(\hat{\bf t})g_{\cF})$ 
is nearly holomorphic and $f$ is a newform, by \cite[Theorem 4]{Sturm1980} and \cite{Shimura1976},
we have
\begin{align*}
\left(\frac{\langle\iota^*((1\otimes \delta_{k_2}^{m})
\rho(\hat{\bf t})g_K),f\rangle}{\langle f,f\rangle_{\Gamma_0(N)}}\right)^{\sigma}
=
\frac{\langle(\iota^*((1\ot\delta_{k_2}^{m})\rho(\h{\bft})g_K^{\sigma})),f^{\sigma}\rangle}
{\langle f^{\sigma},f^{\sigma}\rangle_{\Gamma_0(N)}}.
\end{align*}
In particular, we have $\langle \iota^*((1\ot\delta_{k_2}^{m})
\rho(\hat{\bf t})g_K),f \rangle\in\R$.
Note that 
\[
(1\ot\delta_{k_2}^m\ot 1)\rho(\h{\bft})f_E ((z,z,-\ol{z}),1)
= 
\iota^*((1\ot\delta_{k_2}^{m})\rho(\h{\bft})g_K)(\tau)\ol{f(z)}.
\]
By Theorem \ref{T:central value formula unbalanced case}, we have
\begin{align*}
&\frac{\langle \iota^*((1\ot\delta_{k_2}^{m})
\rho(\h{\bft})g_K),f\rangle^2}{\langle f,f\rangle_{\Gamma_0(N)}^2}
=
2^{-2k_3-1+\nu(\Pi)}M\frac{\Lambda((w+1)/2,g_K\ot f)}
{\cD_{\cF}^{1/2}\langle f,f\rangle_{\Gamma_0(N)}^2}.
\end{align*}
Applying $\sigma$ on both sides and note that $\epsilon(\itPi_v^\sigma)=\epsilon(\itPi_v)$,
$I^*(\itPi^\sigma_v,\bft_v)=I^*(\itPi_v,\bft_v)$ for all $v$, as well as 
$\nu(\itPi)=\nu(\itPi^\sigma)$. The corollary follows from applying our central value formula
to the left hand side again.
\end{proof}

\begin{Remark}
We mention that there are three ways to define the triple
$L$-function: $(1)$ from the Galois side, as we did in this paper; $(2)$ by the Langlands-Shahidi
method; $(3)$ by the theory of local zeta integrals \cite{PSR1987}, \cite{Ikeda1992}. However, for
questions of vanishing or nonvanishing at $1/2$, it makes no difference which definition of the 
$L$-function we choose, since any two of them are only different from a finite number of local 
$L$-factors, which we know are all non-vanishing at $1/2$.
\end{Remark}

\subsection{Balanced case}
Assume $\epsilon(\itPi_\infty)=-1$ in this section. 
We have
\[
D^{\x}(E_\infty)=\mathbf{H}^{\x}\x\mathbf{H}^{\x}\x\mathbf{H}^{\x}
\quad
\text{and}
\quad
(\itPi^D_{\infty},V_{\itPi_\infty^D})
=
(\rho_{\ul{k}},\cL_{\ul{k}}(\C)).
\]
Let $\cA(D^{\x}(\A_E))_{\itPi^D}$ be the underlying space of $\itPi^D$ in $\cA(D^{\x}(\A_E))$ and 
put
\[
\cA_{\ul{k}}(D,E;\wh{R}^{\x}_{\itPi^D})[\itPi^D]
=
\cA_{\ul{k}}(D,E;\wh{R}^{\x}_{\itPi^D})\cap\cA(D^{\x}(\A_E))_{\itPi^D}.
\]
By the multiplicity one theorem and the theory of newform, there exists a unique (up to constants) 
non-zero element $f^D_E\in\cM_{\ul{k}}(D,E;\wh{R}^{\x}_{\itPi^D})$ such that the map 
$\mathbf{v}\ot f^D_E\mapsto \mathit{\Phi}(\mathbf{v}\ot f^D_E)$ defines a
$D^{\x}(E_\infty)$-isomorphism form $\cL_{\ul{k}}(\C)$ onto 
$\cA_{\ul{k}}(D,E;\wh{R}^{\x}_{\itPi^D})[\itPi^D]$. Let $\mathbf{P}_{\ul{k}}\in\cL_{\ul{k}}(\C)$ 
be the $\mathbf{H}^{\x}$-fixed element given by \eqref{E:invariant vector}. We put
$\mathbf{f}^D_E=\mathit{\Phi}(\mathbf{P}_{\ul{k}}\ot f^D_E)$. Then its immediately form the 
definition that $\mathbf{f}_E^D$ is right $\mathbf{H}^{\x}$-invariant.

To state our central value formula for the balanced case, we need some notations.
Let ${\rm Cl}(R_{\itPi^D})$ and ${\rm Cl}(R_{M/N^-})$ be sets of representatives of
\[
\wh{E}^{\x} D^{\x}(E)\backslash D^{\x}(\widehat{E})/\widehat{R}_{\itPi^D}^{\x}
\quad
\text{and} 
\quad
\wh{\Q}^{\x} D^{\x}(\Q)\backslash D^{\x}(\widehat{\Q})/\widehat{R}_{M/N^-}^{\x},
\] 
respectively. Let $\Gamma_{\alpha}$ be finite sets defined by
\[
\left(D^{\x}(E)\cap\wh{E}^{\x}\,\alpha\,\wh{R}^{\x}_{\itPi^D}\,\alpha^{-1}\right)/E^{\x} 
\quad
\text{or}
\quad
\left(D^{\x}(\Q)\cap\wh{\Q}^{\x}\,\alpha\,\wh{R}^{\x}_{M/N^-}\,\alpha^{-1}\right)/\Q^{\x},
\]
according to $\alpha\in{\rm Cl}(R_{\itPi^D})$ or $\alpha\in{\rm Cl}(R_{M/N^-})$, respectively.
We put 
\[
\langle f^D_E,f^D_E\rangle_{\wh{R}^{\x}_{\itPi^D}}
=
\sum_{\alpha\in{\rm Cl}(R_{\itPi^D})}
\frac{1}{{}^\sharp\Gamma_\alpha}
\langle f^D_E(\alpha),f^D_E(\alpha)\rangle_{\ul{k}}.
\]
For each place $v$, let ${\bf t}_v\in D^{\x}(E_v)$ be the element defined in 
\subsecref{SS:raising element} for $\itPi_v^D$ and put $\bft=\ot_v\bft_v$. Recall that 
$M=\prod_{p\mid N_\Q^E(\frak{n})}p$ and that $\nu(\itPi)$ is the non-negative integer defined in
the last paragraph of \S\ref{SS:global settings}. 

\begin{thm}\label{T:central value formula for balanced case}
\begin{itemize}
\item[(1)]
We have
\[
\left(\sum_{\alpha\in{\rm Cl}(R_{M/N^-})}
\frac{1}{{}^\sharp\Gamma_\alpha}
\langle f^D_E(\alpha\bft),\bfP_{\ul{k}}\rangle_{\ul{k}}\right)^2
=
2^{-(k_1+k_2+k_3+1)+\nu(\itPi)}M\cD_E^{-1/2}
\frac{\langle f^D_E,f^D_E\rangle_{\wh{R}^{\x}_{\itPi^D}}}
{\langle f_E,f_E\rangle_{K_0(\wh{\mf{n}})}}
L\left(\frac{1}{2},\itPi,r\right).
\]                              
\item[(2)]
The central value is non-negative, that is
\[
L\left(\frac{1}{2},\itPi,r\right)\geq 0.
\]
\end{itemize}                                
\end{thm}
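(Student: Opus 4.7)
The plan is to mirror the proof of \thmref{T:central value formula unbalanced case}, with the adelic integral on $\A^{\x}D^{\x}(\Q)\backslash D^{\x}(\A)$ now collapsing to a finite sum because $D_\infty^{\x}/\R^{\x}\cong \bfH^{\x}/\R^{\x}$ is compact. Starting from Ichino's central value formula \cite[Theorem 1.1 and Remark 1.3]{Ichino2008} applied to the test vector $\itPi^D(\bft)\bff^D_E$, we obtain
\[
\frac{\bigl|\int_{\A^{\x}D^{\x}\backslash D^{\x}(\A)}\itPi^D(\bft)\bff^D_E(h)\,dh\bigr|^2}{\int_{\A_E^{\x}D^{\x}(E)\backslash D^{\x}(\A_E)}|\bff^D_E(h)|^2\,dh}=2^{1-c}\prod_{p\mid N^-}(p-1)^{-1}\cdot\frac{\zeta_E(2)}{\zeta_{\Q}(2)^2}\cdot\frac{L(1/2,\itPi,r)}{L(1,\itPi,\Ad)}\cdot\prod_v I^*(\itPi_v^D,\bft_v).
\]

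To match the statement, first observe that $\bft_\infty=1$ in the balanced case, so $\bft=\h{\bft}$. Since $\bff^D_E=\mathit{\Phi}(\bfP_{\ul k}\otimes f^D_E)$ is right-invariant under the diagonal embedding of $D^{\x}(\R)$ into $D^{\x}(E_\infty)$ (because $\bfP_{\ul k}$ is $D^{\x}(\R)$-fixed), the global integral reduces to the finite sum
\[
\int_{\A^{\x}D^{\x}\backslash D^{\x}(\A)}\bff^D_E(h\h{\bft})\,dh=\vol\bigl(D^{\x}(\R)\wh{R}^{\x}_{M/N^-}/\R^{\x}\bigr)\sum_{\alpha\in\mathrm{Cl}(R_{M/N^-})}\frac{1}{{}^{\sharp}\Gamma_\alpha}\langle f^D_E(\alpha\h{\bft}),\bfP_{\ul k}\rangle_{\ul k},
\]
with the volume on the right computed from our normalization of Haar measure on $D^{\x}(F)$ in \subsecref{SS:measure for division algebra}. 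Likewise, by \cite[Lemma 6.1 and Lemma 6.3]{IchinoPrasanna}, the Petersson norm $\int|\bff^D_E(h)|^2\,dh$ converts to $\langle f^D_E,f^D_E\rangle_{\wh R^{\x}_{\itPi^D}}$ up to an explicit product of local index factors, and \lmref{L:adelic Petersson inner product for GL_2} rewrites $L(1,\itPi,\Ad)$ in terms of $\langle f_E,f_E\rangle_{K_0(\wh{\frak n})}$. Substituting the values of the local zeta integrals $I^*(\itPi_v^D,\bft_v)$ collected in \secref{S:local zeta integral: matrix algebra} and \secref{S:local zeta integral: division algebra}, including the archimedean value $(k_1-1)(k_2-1)(k_3-1)/(4\pi^2)$ from \propref{P:archimedean balanced case}, and keeping careful track of the volume factors as well as the count $\nu(\itPi)$ that records the contribution of the special (Steinberg) local components at places ramified for $D$, yields the identity in part (1). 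The non-negativity in (2) is then immediate: each $I^*(\itPi_v^D,\bft_v)$ is manifestly non-negative from the formulas in Sections 4 and 5, $L(1,\itPi,\Ad)>0$, and the norms are positive.

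The main technical hurdle will be establishing that the squared expression on the left is literally the square of the real-valued sum rather than its absolute value squared. In the unbalanced proof this was handled via the normalization $\bff^D_E(h\tau_\infty)=\overline{\bff^D_E(h)}$; in the balanced setting the analogous step will use that $\bfP_{\ul k}$ lies in a $\Q$-rational line in $\cL_{\ul k}(\C)$ together with the self-duality $\widetilde{\itPi}^D\cong\itPi^D$ to fix $f^D_E$ so that the pairings $\langle f^D_E(\alpha\h\bft),\bfP_{\ul k}\rangle_{\ul k}$ are real. Beyond this, the only other place that requires genuine care is the bookkeeping of Eichler-order indices $[\wh{\cO}^{\x}_{D(E)}:\wh{R}^{\x}_{\itPi^D}]$ against the explicit local values, but this is purely formal given the setup already laid out in \subsecref{SS:global settings}.
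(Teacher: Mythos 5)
Your outline for part (1) follows the paper's own route: Ichino's formula, the conversion of adelic periods and norms via \cite[Lemmas 6.1 and 6.3]{IchinoPrasanna}, Lemma \ref{L:adelic Petersson inner product for GL_2}, and the local values from \S\ref{S:local zeta integral: matrix algebra}--\ref{S:local zeta integral: division algebra}. But there are two places where what you wrote would not go through as stated. First, the denominator of Ichino's formula is the Petersson integral of the \emph{scalar}-valued form $\bff^D_E=\mathit{\Phi}(\bfP_{\ul k}\ot f^D_E)$, whereas the Ichino--Prasanna lemmas convert $\langle f^D_E,f^D_E\rangle_{\wh R^{\x}_{\itPi^D}}$ into the integral of the \emph{vector}-valued pairing $\langle f^D_E(h),f^D_E(h)\rangle_{\ul k}$. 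Bridging these is not "an explicit product of local index factors": the paper inserts a Schur-orthogonality step, producing the weight-dependent factor $\langle\bfP_{\ul k},\bfP_{\ul k}\rangle_{\ul k}/\bigl((k_1-1)(k_2-1)(k_3-1)\bigr)$, which is exactly what interacts with the archimedean value $I^*(\itPi^D_\infty,\bft_\infty)=(k_1-1)(k_2-1)(k_3-1)/(4\pi^2)$ of Proposition \ref{P:archimedean balanced case} (recall that in the balanced case $I^*$ is \emph{defined} through $\langle\bfP_{\ul k},\bfP_{\ul k}\rangle_{\ul k}$ in \eqref{E:local period integral for the balabced case}). Omitting this step you cannot recover the stated power of $2$ and $\pi$.

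Second, your treatment of the square-versus-$|\cdot|^2$ issue and of part (2) has a genuine gap. The $\Q$-rationality of $\bfP_{\ul k}$ together with self-duality does not by itself make the pairings $\langle f^D_E(\alpha\bft),\bfP_{\ul k}\rangle_{\ul k}$ real (that would require a rationality statement for $f^D_E$ itself, e.g.\ via Waldspurger), and even if they were real this would only give that the left-hand side of (1) is $\geq 0$; to conclude $L(1/2,\itPi,r)\geq 0$ you would additionally need the \emph{bilinear} norm $\langle f^D_E,f^D_E\rangle_{\wh R^{\x}_{\itPi^D}}$ to be positive, which is not automatic because the invariant bilinear form \eqref{E:pairing for balanced case} is indefinite on real vectors (e.g.\ $\langle X Y,XY\rangle_{4}=-1/2$). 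The paper instead proves positivity of the ratio directly: it takes $\tau_\infty$ built from $\pMX{0}{1}{-1}{0}$ (note $\tau_{\R}=\pMX{-1}{0}{0}{1}$ does not lie in $\mathbf H^{\x}$), writes $f^D_E(h)=\alpha\,\ol{f^D_E(h\tau_\infty)}$, and uses the invariant \emph{Hermitian} pairing $\cH_{\ul k}(v,w)=\langle v,\itPi^D_\infty(\tau_\infty)\b w\rangle_{\ul k}$ to show that the same constant $\alpha$ appears in both the squared period and the bilinear norm, so their ratio equals a manifestly non-negative quantity. You should either import that argument or supply a genuine rationality-plus-positivity argument; as written, the sketch does not suffice.
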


\begin{proof}
$(1)$ By Lemmas 6.1 and 6.3 in \cite{IchinoPrasanna}, we have
\[
\left(\sum_{\alpha\in{\rm Cl}(R_{M/N^-})}
\frac{1}{{}^\sharp\Gamma_\alpha}
\langle f^D_E(\alpha\bft),\bfP_{\ul{k}}\rangle_{\ul{k}}\right)
=
\frac{1}{24}\prod_{p\mid M/ N^-}(1+p)\prod_{p\mid N^-}(p-1)
\int_{\A^{\x}D^{\x}(\Q)\backslash D^{\x}(\A)} \bff^D_E(h\bft)dh,
\]
where $dh$ is the Tamagawa measure on $\A^{\x}\backslash D^{\x}(\A)$. On the other hand, applying
same lemmas, we obtain
\begin{align*}
\langle f^D_E,f_E^D\rangle_{\wh{R}^{\x}_{\itPi^D}}
&= 
2^{-6}\pi^{-3}h_E\cD_E^{3/2}\left[\wh{\cO}_{D(E)}^{\x}:\wh{R}_{\itPi^D}^{\x}\right]
\prod_{p\mid N^-}(p-1)
\prod_{p\in\Sigma_1\cap\Sigma_D}(p-1)^2
\prod_{p\in\Sigma_3\cap\Sigma_D,p^3\Vert M}(p^2+p+1)\\
&\x
\zeta_E(2)\int_{\A^{\x}_E D^{\x}(E)\backslash D^{\x}(\A_E)}
\langle f_E^D(h),f_E^D(h)\rangle_{\ul{k}}dh,
\end{align*}
where $dh$ is the Tamagawa measure on $\A^{\x}_E\backslash D^{\x}(\A_E)$.
Schur's orthogonal relation implies
\[
\int_{\A^{\x}_E D^{\x}(E)\backslash D^{\x}(\A_E)}
\mathbf{f}^D_E(h)\mathbf{f}^D_E(h)dh
=
\frac{\langle\bfP_{\ul{k}},\bfP_{\ul{k}}\rangle_{\ul{k}}}
{(k_1-1)(k_2-1)(k_3-1)}
\int_{\A^{\x}_E D^{\x}(E)\backslash D^{\x}(\A_E)}
\langle f_E^D(h),f_E^D(h)\rangle_{\ul{k}}dh.
\]
The measure $dh$ on the RHS of the equation above is also the Tamagawa measure on 
$\A^{\x}_E\backslash D^{\x}(\A_E)$.
By Ichino's formula \cite[Theorem 1.1 and Remark 1.3 ]{Ichino2008} and the choices of Haar measures
in \subsecref{SS:measure for matrix algebra} and \subsecref{SS:measure for division algebra}, we 
find that
\[
\frac{\left(\int_{\A^{\x}D^{\x}(\Q)\backslash D^{\x}(\A)} \bff^D_E(h\bft)dh\right)^2}
{\int_{\A^{\x}_E D^{\x}(E)\backslash D^{\x}(\A_E)}\mathbf{f}^D_E(h)\mathbf{f}^D_E(h)dh}
=
2^{3-c}3^{-1}\prod_{p\mid N^-}(p-1)^{-1}
\cdot
\frac{\zeta_E(2)}{\zeta_\Q(2)}\cdot
\frac{L(1/2,\itPi, r)}{L(1,\itPi,{\rm Ad})}\cdot
\prod_{v}
I^*(\itPi^D_v,\bft_v).
\]
Here the constant $c$ is given by \eqref{E:constant little c}. The central value formula follows
from the equations above together with 
\lmref{L:adelic Petersson inner product for GL_2} and the results for $I^*(\itPi_v^D,\bft_v)$ in
\secref{S:local zeta integral: matrix algebra} and \secref{S:local zeta integral: division algebra}.

To prove $(2)$, it suffices to show that the ratio
\[
\frac{\left(\int_{\A^{\x}D^{\x}(\Q)\backslash D^{\x}(\A)}\bff^D_E(h\bft)dh\right)^2}
{\int_{\A^{\x}_E D^{\x}(E)\backslash D^{\x}(\A_E)}
\langle f_E^D(h),f_E^D(h)\rangle_{\ul{k}}dh}
\]
is non-negative. To do this we consider $(f^D_E)^*(h)=\ol{f^D_E(h\tau_\infty)}$ for 
$h\in D^{\x}(\A_E)$, where
\[
\tau_\infty
=
\left(\begin{pmatrix}
0&1\\-1&0
\end{pmatrix},
\begin{pmatrix}
0&1\\-1&0
\end{pmatrix},
\begin{pmatrix}
0&1\\-1&0
\end{pmatrix}\right)
\in D^{\x}(E_\infty).
\]
The function $(f^D_E)^*$ satisfy the same conditions as $f^D_E$. By the uniqueness, there exists a 
non-zero constant $\alpha$ such that $f^D_E(h)=\alpha\cdot\ol{f^D_E(h\tau_\infty)}$ for all 
$h\in D^{\x}(\A_E)$. On one hand, we have
\begin{align*}
\int_{\A^{\x}D^{\x}(\Q)\backslash D^{\x}(\A)} 
\langle f_E^D(h\bft),\bfP_{\ul{k}}\rangle_{\ul{k}}dh
&=
\alpha\int_{\A^{\x}D^{\x}(\Q)\backslash D^{\x}(\A)} 
\langle\ol{f_E^D(h\bft\tau)},\bfP_{\ul{k}}\rangle_{\ul{k}}dh\\
&=
\alpha\cdot
\ol{\int_{\A^{\x}D^{\x}(\Q)\backslash D^{\x}(\A)} 
\langle f_E^D(h\bft),\bfP_{\ul{k}}\rangle_{\ul{k}}dh}.
\end{align*}
On the other hand, recall that 
\[
\cH_{\ul{k}}(v,w)
=
\langle v,\itPi^D_\infty(\tau_\infty)\b{w}\rangle_{\ul{k}}\quad v,w\in\cL(\C),
\]
defines an $D^{\x}(E_\infty)$-invariant $Hermitian$ pairing on $V_{\itPi^D_\infty}$. 
We have
\begin{align*}
\int_{\A^{\x}_E D^{\x}(E)\backslash D^{\x}(\A_E)}
\langle f_E^D(h),f_E^D(h)\rangle_{\ul{k}}dh
&=
\int_{\A^{\x}_E D^{\x}(E)\backslash D^{\x}(\A_E)}
\langle f^D_E(h),f^D_E(h)\rangle_{\ul{k}}dh\\
&=
\alpha\int_{\A^{\x}_E D^{\x}(E)\backslash D^{\x}(\A_E)}
\langle f^D_E(h),\ol{f^D_E(h\tau)}\rangle_{\ul{k}}dh\\
&=
\alpha\int_{\A^{\x}_E D^{\x}(E)\backslash D^{\x}(\A_E)}
\cH_{\ul{k}}(f^D_E(h),f^D_E(h))dh.
\end{align*}
This finishes the proof.
\end{proof}

Define the motivic $L$-function for $f_E$ by
\[
L(s,f_E,r)
=
\prod_{p}L\left(s-\frac{w}{2},\itPi_p,r_p\right).
\]
Recall that $w$ is given by \eqref{E:w}.
We have the following corollary, which proves the Deligne's conjecture for the central critical
value of $L(s,f_E,r)$ in the balanced range.

\begin{cor}\label{C:algebraicity for triple for balanced case}
Let $\sigma\in{\rm Aut}(\C)$. We have
\[
\left(
\frac{L\left((w+1)/2,f_E,r\right)}
{\cD_E^{1/2}\pi^{w+2}\langle f_E, f_E\rangle_{K_0(\wh{\mf{n}})}}
\right)^\sigma
=
\frac{L\left((w+1)/2,f^\sigma_E,r\right)}
{\cD_E^{1/2}\pi^{w+2}\langle f^\sigma_E,f^\sigma_E\rangle_{K_0(\wh{\mf{n}})}}. 
\]
\end{cor}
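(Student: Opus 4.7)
The plan is to combine the central value formula in Theorem \ref{T:central value formula for balanced case} with a rational-structure argument for the definite quaternionic form $f_E^D$. Solving the formula in Theorem \ref{T:central value formula for balanced case} for $L(1/2,\itPi,r)$ gives
\[
L(1/2,\itPi,r)=\frac{2^{k_1+k_2+k_3+1-\nu(\itPi)}}{M}\,\cD_E^{1/2}\cdot\frac{\langle f_E,f_E\rangle_{K_0(\wh{\frak{n}})}}{\langle f_E^D,f_E^D\rangle_{\wh{R}_{\itPi^D}^\times}}\cdot P(f_E^D)^2,
\]
where $P(f_E^D):=\sum_{\alpha\in\mathrm{Cl}(R_{M/N^-})}\frac{1}{{}^\sharp\Gamma_\alpha}\langle f_E^D(\alpha\bft),\bfP_{\ul{k}}\rangle_{\ul{k}}$. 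The motivic $L$-value equals $L((w+1)/2,f_E,r)=L(1/2,\itPi,r)/L_\infty(1/2,\itPi,r)$, and the archimedean factor computed in the proof of \propref{P:archimedean balanced case}, together with the standard evaluations of $\Gamma$ at (half-)integers, gives $L_\infty(1/2,\itPi,r)=q_\infty\,\pi^{-(w+2)}$ for a nonzero $q_\infty\in\Q$; one verifies that the total $\pi$-exponent collects to exactly $-(w+2)$ via the sum $\tfrac{w+2}{2}+\tfrac{-k_1+k_2+k_3}{2}+\tfrac{k_1-k_2+k_3}{2}+\tfrac{k_1+k_2-k_3}{2}=w+2$. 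Substituting yields
\[
\frac{L((w+1)/2,f_E,r)}{\cD_E^{1/2}\,\pi^{w+2}\,\langle f_E,f_E\rangle_{K_0(\wh{\frak{n}})}}=q(\itPi)\cdot\frac{P(f_E^D)^2}{\langle f_E^D,f_E^D\rangle_{\wh{R}_{\itPi^D}^\times}},\qquad q(\itPi)\in\Q^\times,
\]
so the task reduces to showing that the right-hand side is algebraic and transforms correctly under $\sigma\in\mathrm{Aut}(\C)$.

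For algebraicity, I would exploit that in the balanced case $D$ is totally definite at infinity, so $f_E^D$ is a function on the finite double-coset set $D^\x(E)\backslash D^\x(\wh{E})/\wh{R}_{\itPi^D}^\times$ valued in the $\Q$-rational representation $\cL_{\ul{k}}(\Q)$, and the pairing \eqref{E:pairing for balanced case 2} is defined over $\Q$. By the multiplicity-one theorem and newform theory, the $\itPi^D$-isotypic subspace of $\cM_{\ul{k}}(D,E;\wh{R}_{\itPi^D}^\times)$ is one-dimensional; it is the simultaneous Hecke eigenspace for the eigenvalues of $\itPi^D$ and hence is defined over $\Q(f_E)$ inside the $\Q$-structure coming from $\cL_{\ul{k}}(\Q)$-valued functions. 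Normalizing $f_E^D$ to lie in this $\Q(f_E)$-line, both $P(f_E^D)$ and $\langle f_E^D,f_E^D\rangle_{\wh{R}_{\itPi^D}^\times}$ are finite $\Q$-linear combinations of pairings of values in $\Q(f_E)\cdot\cL_{\ul{k}}(\Q)$, hence lie in $\Q(f_E)$, and the ratio $P(f_E^D)^2/\langle f_E^D,f_E^D\rangle_{\wh{R}_{\itPi^D}^\times}$ is insensitive to the scalar choice and therefore canonically defined in $\Q(f_E)$.

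For Galois equivariance under $\sigma\in\mathrm{Aut}(\C)$, I would observe that the local isomorphism classes of $\itPi_v$ (spherical, unramified special, or their twists) are preserved under $\sigma$, so the integer $\nu(\itPi)$ and the ramification set of $D$ are $\sigma$-invariant; together with the manifest $\sigma$-invariance of $M$, $\cD_E$, and $q_\infty$, this gives $q(\itPi^\sigma)=q(\itPi)$. Galois equivariance of the Jacquet--Langlands correspondence and of the Hecke eigensystem identifies $(f_E^D)^\sigma$ with $f_{E}^{D,\sigma}$ up to a scalar in $(\Q(f_E)^\sigma)^\times$, and this scalar cancels in the canonical ratio $P^2/\langle f_E^D,f_E^D\rangle$. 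Applying $\sigma$ to the displayed identity then yields the corollary.

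The main obstacle I anticipate is pinning down the $\Q(f_E)$-rational normalization of $f_E^D$ compatibly with the $\sigma$-action; this replaces Shimura's arithmeticity of nearly holomorphic Hilbert modular forms used in the unbalanced case \corref{C:algebraicity for triple, unbalanced case} by a purely finite-dimensional Hecke-module argument on $\cM_{\ul{k}}(D,E;\wh{R}_{\itPi^D}^\times)$. Granted that the $\itPi^D$-isotypic line descends to $\Q(f_E)$, the remainder of the argument is bookkeeping with the explicit rational constant $q(\itPi)$ and the archimedean $\pi$-power extracted above.
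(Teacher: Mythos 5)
Your overall strategy coincides with the paper's: invert the central value formula of \thmref{T:central value formula for balanced case}, peel off the archimedean factor, and reduce the corollary to the ${\rm Aut}(\C)$-equivariance of the ratio $P(f_E^D)^2/\langle f_E^D,f_E^D\rangle_{\wh{R}_{\itPi^D}^{\x}}$ together with the $\sigma$-invariance of the elementary constants ($M$, $\nu(\itPi)$, $\cD_E$, the local factors $I^*(\itPi_v^D,\bft_v)$, and the rational archimedean constant). However, two points are genuine gaps as written. First, you never treat the case $\epsilon(\itPi)=-1$: there the quaternion algebra with the prescribed ramification (which must include the infinite place in the balanced case) does not exist, $f_E^D$ is not defined, and \thmref{T:central value formula for balanced case} cannot be invoked at all. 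The paper disposes of this case by the vanishing $L\left((w+1)/2,f_E,r\right)=0$ (Harris--Kudla, Prasad) combined with $\epsilon(\itPi^\sigma)=\epsilon(\itPi)$ from Lemma A of the appendix, so that both sides of the asserted identity are zero; your argument silently assumes $\epsilon(\itPi)=1$ throughout.

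Second, the crux — that $P(f_E^D)^2/\langle f_E^D,f_E^D\rangle_{\wh{R}_{\itPi^D}^{\x}}$ is sent by $\sigma$ to the corresponding quantity for $f_E^\sigma$ — is precisely what you flag as ``the main obstacle'' and only sketch, and the sketch is too naive at one point: the automorphy condition $f(a\gamma h h_\infty u)=\rho_{\ul{k}}(h_\infty)^{-1}f(h)$ constrains the values at double-coset representatives by the stabilizers $\Gamma_\alpha$ acting through their archimedean embeddings, which are not rational matrices in the coordinates of $\cL_{\ul{k}}(\Q)$; hence ``$\cL_{\ul{k}}(\Q)$-valued functions on a finite set'' is not by itself an ${\rm Aut}(\C)$-stable $\Q$-structure, and one must build the rational structure from the algebraic representation of the algebraic group $D^{\x}$ (Waldspurger's construction). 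This is exactly what the paper's proof supplies by citing \cite[Lemme II.1.1]{Wald1985B}; with that reference (or a complete algebraic-modular-forms argument replacing it) your outline closes, and the scalar-ambiguity cancellation in the ratio is fine since both numerator and denominator are quadratic in $f_E^D$. The archimedean bookkeeping is correct in substance — the total $\pi$-exponent is indeed $w+2$ and the Gamma values are rational because all arguments are positive integers — though your displayed sum has a slip: the first argument at $s=1/2$ is $(w+1)/2$, not $(w+2)/2$.
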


\begin{proof}
If $\epsilon(\itPi)=-1$, then by the same argument in 
\corref{C:algebraicity for triple, unbalanced case}, we have
\[
L\left(\frac{w+1}{2},f_E^\sigma,r\right)
=
L\left(\frac{w+1}{2},f_E,r\right)
=
0,
\]
for every $\sigma\in{\rm Aut}(\C)$. 

Assume $\epsilon(\itPi)=1$ and put
\[
\langle f^D_E,\bfP_{\ul{k}}\rangle_{\ul{k}}
=
\left(\sum_{\alpha\in{\rm Cl}(R_{M/N^-})}
\frac{1}{{}^\sharp\Gamma_\alpha}
\langle f^D_E(\alpha\bft),\bfP_{\ul{k}}\rangle_{\ul{k}}\right)^2.
\] 
By \cite[Lemme II.1.1]{Wald1985B}, we have
\[
\left( 
\frac{\langle f^D_E,\bfP_{\ul{k}}\rangle_{\ul{k}}}
{\langle f^D_E,f^D_E\rangle_{\wh{R}^{\x}_{\itPi^D}}}
\right)^\sigma 
=
\frac{\langle (f^\sigma_E)^D,\bfP_{\ul{k}}\rangle_{\ul{k}}}
{\langle (f^\sigma_E)^D,(f^\sigma_E)^D\rangle_{\wh{R}^{\x}_{\itPi^D}}}
\]
for all $\sigma\in{\rm Aut}(\C)$.  The rest of the proof is similar to that in the last paragraph
of \corref{C:algebraicity for triple, unbalanced case}.
\end{proof}
\section{Applications}
In this section, we prove our main results
of this paper. Let $N_1,N_2$ be positive square free integers, and $\kappa',\kappa$ be positive even integers. 
Put $w=2\kappa+\kappa'-3$. 
Let $N={\rm gcd}(N_1,N_2)$ and $M={\rm lcm}(N_1,N_2)$. Let $f\in S_{\kappa'}(\Gamma_0(N_1))$ and 
$g\in S_{\kappa}(\Gamma_0(N_2))$ be normalized elliptic newforms and ${\bf f}$ and ${\bf g}$ be 
the adelic lifts of $f$ and $g$, respectively. Let $\tau=\ot'_v\tau_v$ and $\pi=\ot'_v \pi_v$ be 
the irreducible unitary cuspidal automorphic representations of $\GL_2(\A)$ generated by 
${\bf f}$ and ${\bf g}$, respectively.

If $F'$ is a cyclic extension of $\Q$ with prime degree, we let $\pi_{F'}$ be the base change lift
of $\pi$ to $\G(\A_{F'})$.  Since $N_2$ is assumed to be square-free, $\pi_{F'}$ is a unitary
irreducible cuspidal automorphic representation of $\G(\A_{F'})$ with trivial central character
\cite{Arthur1989book}.

We define the motivic $L$-function and its associated completed $L$-function for
 ${\rm Sym}^2(g)\otimes f$ by
\begin{align*}
L(s,{\rm Sym}^2(g)\otimes f)
&= \prod_{p}L\left(s-\frac{w}{2},{\rm Sym}^2(\pi_p )\ot\tau_p 
\right), & & \Lambda (s,{\rm Sym}^2(g)\ot f )
=  
\prod_{v}L\left(s-\frac{w}{2},{\rm Sym}^2(\pi_v)\ot\tau_v \right).
\end{align*}
Note that $L(s,{\rm Sym}^2(g)\otimes f)$ is holomorphic at $s=(w+1)/2$.
\begin{cor}\label{C:algebraicity for GL_2 times GL_3 unbalanced case}
Assume $\kappa' \geq 2\kappa$. Let $\epsilon=(-1)^{\kappa'/2-1}$. 
For $\sigma \in {\rm Aut}(\C)$, we have
\begin{align*}
\left( 
\frac{L((w+1)/2,{\rm Sym}^2(g)\otimes f)}{\pi^{3\kappa'/2}(\sqrt{-1})^{\kappa'/2-1}
\langle f,f\rangle \Omega_f^{\epsilon} }\right )^{\sigma}
&=
\frac{L((w+1)/2,{\rm Sym}^2(g^{\sigma})\otimes f^{\sigma})}{\pi^{3\kappa'/2}
(\sqrt{-1})^{\kappa'/2-1}
\langle f^{\sigma},f^{\sigma}\rangle \Omega_{f^{\sigma}}^{\epsilon} }.
\end{align*}
Here $\Omega_f^{\pm}$ are the periods of $f$ defined by Shimura in \cite{Shimura1977}.
\end{cor}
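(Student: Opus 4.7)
The strategy, as outlined in the introduction, is to realize $L((w+1)/2,\Sym^2(g)\otimes f)$ as a ratio of a triple-product central value (to which \corref{C:algebraicity for triple, unbalanced case} applies) and a Dirichlet-twist central value (to which Shimura's classical result applies). The plan has four steps.

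First, I would invoke the result of Friedberg--Hoffstein \cite{FriedbergHoffstein1995} to choose a real quadratic field $K/\Q$ with quadratic Dirichlet character $\tau_K$ such that simultaneously (i) $L(\kappa'/2,f\otimes\tau_K)\neq 0$, and (ii) at every finite prime $p$ the local root number $\epsilon(\Pi_p)$ of the twisted triple product representation $\Pi$ attached to $g_K\otimes f$ equals $+1$, so that the quaternion algebra $D$ produced by Ichino's formula is $\mathrm{M}_2/\Q$. Since $\kappa'\geq 2\kappa$, the archimedean root number $\epsilon(\Pi_\infty)$ is automatically $+1$, so that the corresponding triple is unbalanced and the assumptions of \thmref{T:central value formula unbalanced case} are met. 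This simultaneous control over non-vanishing and local root numbers is the key input; Friedberg--Hoffstein provides enough flexibility because the root number condition at each fixed prime amounts to restricting $\tau_K$ to a prescribed local class, and one can still vary among infinitely many $K$ realising a given set of local conditions while retaining non-vanishing of the central twisted $L$-value.

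Second, I would use the factorisation of $L$-functions
\begin{equation*}
L(s,g_K\otimes f)=L(s,\Sym^2(g)\otimes f)\,L(s-\kappa+1,f\otimes\tau_K),
\end{equation*}
evaluated at $s=(w+1)/2=\kappa+\kappa'/2-1$, so that the Dirichlet factor is evaluated at its central point $\kappa'/2$. Write
\begin{equation*}
L\!\left(\tfrac{w+1}{2},g_K\otimes f\right)=L\!\left(\tfrac{w+1}{2},\Sym^2(g)\otimes f\right)L\!\left(\tfrac{\kappa'}{2},f\otimes\tau_K\right).
\end{equation*}

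Third, I would apply \corref{C:algebraicity for triple, unbalanced case} to the numerator $L((w+1)/2,g_K\otimes f)$, taking $\mathcal{F}=K$, $g_\mathcal{F}=g_K$; this yields the Galois equivariance of
\begin{equation*}
\frac{L((w+1)/2,g_K\otimes f)}{\cD_K^{1/2}\pi^{2\kappa'}\langle f,f\rangle_{\Gamma_0(N_1)}^{\,2}}.
\end{equation*}
For the Dirichlet factor I would invoke Shimura's algebraicity theorem for twisted modular $L$-values: since $\tau_K$ is even, the relevant period is $\Omega_f^{\epsilon}$ with $\epsilon=(-1)^{\kappa'/2-1}$ (the sign of the critical point), and Shimura gives
\begin{equation*}
\left(\frac{L(\kappa'/2,f\otimes\tau_K)}{(2\pi\sqrt{-1})^{\kappa'/2}\,G(\tau_K)\,\Omega_f^{\epsilon}}\right)^{\!\sigma}
=\frac{L(\kappa'/2,f^\sigma\otimes\tau_K)}{(2\pi\sqrt{-1})^{\kappa'/2}\,G(\tau_K)\,\Omega_{f^\sigma}^{\epsilon}},
\end{equation*}
where $G(\tau_K)$ is the Gauss sum, which by the conductor--discriminant formula for real quadratic fields is $\cD_K^{1/2}$ up to an algebraic unit and is Galois invariant.

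Finally, dividing the first algebraicity by the second extracts the required algebraicity for $L((w+1)/2,\Sym^2(g)\otimes f)$. A brief bookkeeping of the powers of $2\pi\sqrt{-1}$, $\cD_K^{1/2}$, and $\langle f,f\rangle$ matches the combined period against $\pi^{3\kappa'/2}(\sqrt{-1})^{\kappa'/2-1}\langle f,f\rangle\Omega_f^{\epsilon}$ up to a nonzero rational factor, as is already implicit in the comparison of the Deligne period of Theorem A with the one appearing in the corollary. The main obstacle will be Step one: arranging both the non-vanishing and the full local root number constraint through a single choice of $K$, uniformly in $\sigma$; the uniformity is not actually needed because $\sigma$ preserves Euler factors, local root numbers and non-vanishing of central values, so one fixed $K$ works for all $\sigma\in\Aut(\C)$ simultaneously once it works for the identity.
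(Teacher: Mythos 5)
There is a genuine gap in your Step one. You assert that Friedberg--Hoffstein lets you pick a single real quadratic $K$ with \emph{both} $L(\kappa'/2,f\otimes\chi_K)\neq 0$ and $\epsilon(\itPi_v)=+1$ at every place; but these two requirements are not always compatible. For any $K$ satisfying the local conditions that force $\epsilon(\itPi_v)=1$ for all $v$, the global sign is $+1$, and the factorization of epsilon factors gives
$\epsilon\bigl(\tfrac12,\Sym^2(\pi)\otimes\tau\bigr)=\epsilon\bigl(\tfrac12,\tau\otimes\chi_K\bigr)$.
Hence if the root number of $\Sym^2(g)\otimes f$ is $-1$, then for \emph{every} admissible $K$ the twisted central value $L(\kappa'/2,f\otimes\chi_K)$ vanishes by its functional equation, and no amount of varying $K$ within the allowed local classes can produce the nonvanishing twist your argument needs. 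This is exactly the case your proposal omits: one must observe that in this situation $L((w+1)/2,\Sym^2(g)\otimes f)=0$, and — using the Galois invariance of the root number, i.e. $\epsilon(1/2,\Sym^2(\pi^\sigma)\otimes\tau^\sigma)=\epsilon(1/2,\Sym^2(\pi)\otimes\tau)$ (\corref{C:epsilons foctor of symmetric square and cubic are galois invariant}) — that the $\sigma$-conjugate central value vanishes as well, so the asserted identity holds trivially with both sides zero. Only after excluding this case does the nonvanishing theorem apply (to some $K'$ in the prescribed set), and the rest of your outline goes through; this case split is the essential structural point of the paper's proof that your proposal is missing.

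Two further points in your Step three/four do not close as written. First, at the central point $\kappa'/2$ with the even character $\chi_{K'}$, Shimura's period is $\Omega_f^{-\epsilon}$ (sign $(-1)^{\kappa'/2}$), not $\Omega_f^{\epsilon}$ as you state. Second, your ``brief bookkeeping'' cannot match the periods by itself: dividing the triple-product period $\cD_{K'}^{1/2}\pi^{2\kappa'}\langle f,f\rangle^2$ by the twist period leaves $\langle f,f\rangle^2/\Omega_f^{-\epsilon}$, and to convert this into $\langle f,f\rangle\,\Omega_f^{\epsilon}$ one must also invoke Shimura's norm relation
\[
\left(\frac{\langle f,f\rangle}{(\sqrt{-1})^{\kappa'-1}\Omega_f^{+}\Omega_f^{-}}\right)^{\sigma}
=\frac{\langle f^{\sigma},f^{\sigma}\rangle}{(\sqrt{-1})^{\kappa'-1}\Omega_{f^{\sigma}}^{+}\Omega_{f^{\sigma}}^{-}},
\]
which your proposal never mentions; without it the discrepancy between $\langle f,f\rangle^2$ and $\langle f,f\rangle\,\Omega_f^{\epsilon}$ is not ``a nonzero rational factor'' but a genuine period that must be accounted for Galois-equivariantly.
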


\begin{proof}
Define $\Xi$ to be the set of real quadratic extensions $K / \Q$ such that 
\begin{itemize}
\item
$D_K$ is prime to $M$. 
\item
$\left(\frac{D_K}{p}\right)=-1$ for $p\mid N$.
\item 
$\left (\frac{D_K}{p} \right ) =1$ for $p\mid M/N_2$.
\end{itemize}
Here $D_K$ is the discriminant of $K /\Q$. Let $K \in \Xi$ and 
$\chi_{\cF}=\otimes_v \chi_{\cF,v} : \cF^{\times} \backslash \A_{\cF}^{\times} \rightarrow \C$ 
be the idele class character associated to $\cF$ by class field theory. 
Put $\itPi=\pi_K\bt\tau$. By the properties of $K$ , we have $\epsilon(\itPi_v)=1$ for all $v$. 
On the other hand, by the results of \cite{Prasad1990} and \cite{Prasad1992}, we have
\[
\epsilon(\itPi_v)
=
\epsilon \left(\frac{1}{2}, \itPi_v,r_v,\psi_v  \right )\chi_{\cF,v}(-1),
\]
for all place $v$. In particular, $\epsilon \left (1/2,\itPi,r,\psi \right)=1$ and the 
matrix algebra ${\rm M}_2$ is the unique quaternion algebra over $\Q$ satisfying 
(\ref{E:local root number condition}). 
We see from the factorization
$\epsilon(s,\itPi,r,\psi)=\epsilon(s,{\rm Sym}^2(\pi)\ot\tau,\psi)\epsilon(s,\tau\ot\chi_K,\psi)$
that 
\[
\epsilon\left (\frac{1}{2},{\rm Sym}^2(\pi)\ot\tau,\psi\right)
=
\epsilon\left (\frac{1}{2},\tau\ot\chi_K,\psi\right ).
\]
If $\epsilon \left (1/2,\tau\ot\chi_{\cF},\psi\right)=-1$, 
then $\epsilon \left (1/2,{\rm Sym}^2(\pi)\ot\tau,\psi \right)=-1$. On the other hand, by 
\corref{C:epsilons foctor of symmetric square and cubic are galois invariant}, we also have
$\epsilon \left (1/2,{\rm Sym}^2(\pi^\sigma)\otimes\tau^\sigma,\psi \right)=-1$.
Therefore
\[
L\left( \frac{w+1}{2},{\rm Sym}^2(g^{\sigma})\otimes f^{\sigma} \right)
=
L\left( \frac{w+1}{2},{\rm Sym}^2(g)\otimes f \right) 
=
0,
\]
for all $\sigma \in {\rm Aut}(\C)$ by functional equation. Otherwise, by the nonvanishing theorem of \cite{FriedbergHoffstein1995}, there exists $K' \in \Xi$ such that $L\left (\kappa'/2,f\otimes \chi_{K'}\right )\neq 0$. Let $\sigma \in {\rm Aut}(\C)$. 
By \cite{Shimura1977}, we have
\begin{align*}
\left( 
\frac{L(\kappa'/2,f\otimes \chi_{K'})}{D_{K'}^{1/2}\pi^{\kappa'/2}(\sqrt{-1})^{\kappa'/2}
\Omega_f^{-\epsilon}}\right )^{\sigma}
&=
\frac{L(\kappa'/2,f^{\sigma}\ot\chi_{{K'}})}{D_{K'}^{1/2}\pi^{\kappa'/2} 
(\sqrt{-1})^{\kappa'/2}\Omega_{f^{\sigma}}^{-\epsilon}},\\
\left( 
\frac{\langle f,f  \rangle }{(\sqrt{-1})^{\kappa'-1}\Omega_f^+\Omega_f^-}\right )^{\sigma}
&=
\frac{\langle f^{\sigma},f^{\sigma}\rangle }{(\sqrt{-1})^{\kappa'-1}
\Omega_{f^{\sigma}}^+\Omega_{f^{\sigma}}^-}.
\end{align*}
Let $g_{K'}$ be the normalized Hilbert modular newform associated to $\pi_{K'}$, the base change lift of $\pi$ to $\GL_2(\A_{K'})$. 
By \corref{C:algebraicity for triple, unbalanced case}, we have
\begin{align*}
\left(
\frac{L((w+1)/2,g_{K'}\otimes f)}
{D_{K'}^{1/2}\pi^{2\kappa'}\langle f,f\rangle^2}\right)^{\sigma}
=
\frac{L((w+1)/2,g_{K'}^{\sigma}\otimes f^{\sigma})}
{D_{K'}^{1/2}\pi^{2\kappa'}\langle f^{\sigma},f^{\sigma}\rangle^2}.
\end{align*}
Note that $g_{K'}^{\sigma} = (g^{\sigma})_{K'}$.
Now the corollary follows from combining these equations
with the following factorization 
\[
L \left (\frac{w+1}{2},g_{K'}\otimes f \right )
=
L \left ( \frac{w+1}{2}, {\rm Sym}^2(g)\otimes f\right )L
\left ( \frac{\kappa'}{2},f\otimes \chi_{{K'}}\right ).
\]
This completes the proof.
\end{proof}

Define the Petersson norm of $g$ by
\[
\langle g,g  \rangle 
= 
\int_{\Gamma_0(N_2) \backslash \frak{H}}|g(\tau)|^2y^{\kappa-2}d\tau.
\]

\begin{cor}\label{C:algebraicity for GL_2 times GL_3 balanced case}
Assume $2\kappa>\kappa'$ and $N_1>1$. Let $\epsilon=(-1)^{\kappa'/2-1}$. 
For $\sigma\in{\rm Aut}(\C)$, we have
\[
\left( 
\frac{L((w+1)/2,{\rm Sym}^2(g)\ot f)}
{\pi^{2\kappa+\kappa'/2-1} (\sqrt{-1})^{\kappa'/2-1}\langle g,g\rangle^2\Omega^{\epsilon}_f}      
\right)^\sigma
=
\frac{L((w+1)/2,{\rm Sym}^2(g^\sigma)\ot f^\sigma)}
     {\pi^{2\kappa+\kappa'/2-1}(\sqrt{-1})^{\kappa'/2-1}\langle g^\sigma,g^\sigma\rangle^2
     \Omega^\epsilon_{f^{\sigma}}}.      
\]
Here $\Omega^{\pm}_f$ are the periods of $f$ defined by Shimura in \cite{Shimura1977}.
\end{cor}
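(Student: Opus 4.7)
The plan is to mirror the proof of \corref{C:algebraicity for GL_2 times GL_3 unbalanced case}, substituting \corref{C:algebraicity for triple for balanced case} for its unbalanced counterpart and using the hypothesis $N_1>1$ to arrange that the quaternion algebra produced by Ichino's formula is definite. Since $2\kappa>\kappa'$, the archimedean local root number satisfies $\epsilon(\itPi_\infty)=-1$ for $\itPi=\pi_K\bt\tau$ over any real quadratic $K$, so the balanced hypothesis of \corref{C:algebraicity for triple for balanced case} is in force and $D_\infty=\mathbf{H}$. I would fix a prime $p_0\mid N_1$ and define a set $\Xi$ of real quadratic extensions $K/\Q$ by congruence conditions on $D_K$ with respect to $M=\mathrm{lcm}(N_1,N_2)$ and $p_0$, chosen so that for every $K\in\Xi$: (i) the global sign $\epsilon(1/2,\itPi)=+1$, and (ii) the quaternion algebra $D$ produced by \eqref{E:local root number condition} is ramified precisely at $\{\infty,p_0\}$, hence is definite. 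The hypothesis $N_1>1$ is essential here: it provides the finite prime $p_0$ needed to balance the archimedean ramification while keeping $\tau_{p_0}$ ramified, so that $\epsilon(\itPi_{p_0})=-1$ can be imposed by a suitable local choice of $K_{p_0}$.

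Next, from the factorization $\epsilon(s,\itPi,r,\psi)=\epsilon(s,\Sym^2(\pi)\ot\tau,\psi)\cdot\epsilon(s,\tau\ot\chi_K,\psi)$ and (i), the two factors on the right share the same sign at $s=1/2$. If that common sign is $-1$ for every $K\in\Xi$, then the functional equation together with \corref{C:epsilons foctor of symmetric square and cubic are galois invariant} forces both sides of the claimed identity to vanish. Otherwise, Friedberg--Hoffstein \cite{FriedbergHoffstein1995} produces some $K\in\Xi$ with $L(\kappa'/2,f\ot\chi_K)\neq 0$. Setting $E=K\x\Q$ and $f_E=g_K\ot f$, \corref{C:algebraicity for triple for balanced case} gives the $\mathrm{Aut}(\C)$-equivariance of
\[
\frac{L((w+1)/2,g_K\ot f)}{\cD_E^{1/2}\pi^{w+2}\langle f_E,f_E\rangle_{K_0(\wh{\frak{n}})}},
\]
which combines with Shimura's algebraicity \cite{Shimura1977} for $L(\kappa'/2,f\ot\chi_K)$ via the factorization $L(s,g_K\ot f)=L(s,\Sym^2(g)\ot f)\cdot L(s-\kappa+1,f\ot\chi_K)$ to yield an $\mathrm{Aut}(\C)$-equivariance statement for $L((w+1)/2,\Sym^2(g)\ot f)$.

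The main technical obstacle will be to convert the resulting denominator into the prescribed form $\pi^{2\kappa+\kappa'/2-1}(\sqrt{-1})^{\kappa'/2-1}\langle g,g\rangle^2\Omega_f^\epsilon$. One uses $\langle f_E,f_E\rangle_{K_0(\wh{\frak{n}})}=\langle g_K,g_K\rangle\cdot\langle f,f\rangle$, followed by an analysis of $\langle g_K,g_K\rangle$ via the factorization $L(s,\mathrm{Ad}\,\pi_K)=L(s,\Sym^2(\pi))\cdot L(s,\Sym^2(\pi)\ot\chi_K)$ at $s=1$. This identifies $\langle g_K,g_K\rangle$, up to explicit constants and controlled powers of $\pi$ and $\cD_K$, with $\cD_K\cdot\langle g,g\rangle\cdot L(1,\Sym^2(g)\ot\chi_K)$; Shimura's algebraicity theorem \cite{Sturm1980} then turns the symmetric-square $L$-value into a further multiple of $\langle g,g\rangle$ (up to a power of $\pi$), yielding $\langle g_K,g_K\rangle$ as an algebraic multiple of $\pi^{?}\cD_K\langle g,g\rangle^2$. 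Combined with $\langle f,f\rangle\sim(\sqrt{-1})^{\kappa'-1}\Omega_f^+\Omega_f^-$ and the Shimura period $D_K^{1/2}\pi^{\kappa'/2}(\sqrt{-1})^{\kappa'/2}\Omega_f^{-\epsilon}$ coming from $L(\kappa'/2,f\ot\chi_K)$, the factors of $\cD_K$ and $\Omega_f^{-\epsilon}$ cancel and the prescribed denominator emerges. The delicate step is the careful accounting of the powers of $\pi$ and $\sqrt{-1}$ and of the parity sign $\epsilon$ at each stage, which matches with the Deligne period computation recorded in the appendix.
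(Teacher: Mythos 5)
Your proposal is correct and follows essentially the same route as the paper's proof: choose a real quadratic field via Friedberg--Hoffstein (the hypothesis $N_1>1$ making the definite-quaternion/balanced situation accessible), apply \corref{C:algebraicity for triple for balanced case} together with the factorization $L(s,g_K\ot f)=L(s,\Sym^2(g)\ot f)L(s-\kappa+1,f\ot\chi_K)$ and Shimura's results for $L(\kappa'/2,f\ot\chi_K)$ and $\langle f,f\rangle$, and dispose of $\langle g_K,g_K\rangle$ versus $\langle g,g\rangle^2$ through the adjoint factorization $L(1,\pi_K,{\rm Ad})=L(1,\pi,{\rm Ad})L(1,\pi,{\rm Ad},\chi_K)$ and Sturm's symmetric-square algebraicity, exactly as the paper does. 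The only cosmetic difference is that you insist on forcing $D$ to be ramified precisely at $\{\infty,p_0\}$, which is harmless but unnecessary, since \corref{C:algebraicity for triple for balanced case} (including the sign $-1$ vanishing case) applies to whatever definite quaternion algebra the local root numbers produce.
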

\begin{proof}
Since $N_1>1$, by the non-vanishing results of \cite{FriedbergHoffstein1995}, we can choice a 
real quadratic field $\cF$ with fundamental discriminant $\cD>0$ such that
$L\left(\kappa'/2,f\otimes \chi_{\cD}\right)\neq 0$, where $\chi_{\cD}$ is the Dirichlet character
associated to $\cF/\Q$ by class field theory.
Let $g_{\cF}$ be the normalized Hilbert modular newform associated to $\pi_{\cF}$ and $\bfg_{\cF}\in\pi_{\cF}$
be its adelic lift. By equation \eqref{E:norm relation}, the Petersson norm of $g_{\cF}$ is given by
\[
\langle g_{\cF}, g_{\cF}\rangle 
= 
h_{\cF}\left[ \GL_2(\widehat{\mathcal{O}}_{\cF}) : K_0(N_2\cO_{\cF})\right]
\cD_{\cF}^{3/2} \zeta_{\cF}(2)
\int_{\A_{\cF}^{\times}\GL_2^{\times}(\cF)\backslash\GL_2^{\times}(\A_{\cF})} 
|\bfg_{\cF}(h)|^2 dh,
\]
where $h_{\cF}$ is the class number of $\cF$ and $dh$ is the Tamagawa measure on 
$\A_{\cF}^{\x}\backslash\G(\A_{\cF})$.
We have 
\[
\left( 
\frac{\langle g_{\cF},g_{\cF}\rangle}{\langle g,g\rangle^2}
\right)^\sigma
=
\frac{\langle (g^\sigma)_{\cF},(g^\sigma)_{\cF}\rangle}
     {\langle g^\sigma,g^\sigma\rangle^2}. 
\]
This equality follows from combining the factorization 
\[
L(1,\pi_{\cF},{\rm Ad})
=
L(1,\pi,{\rm Ad})L(1,\pi,{\rm Ad},\chi),
\]
and a result of Sturm \cite{Sturm1989}. The rest of proof is similar to that of 
\corref{C:algebraicity for GL_2 times GL_3 unbalanced case} except we use 
\corref{C:algebraicity for triple for balanced case} here instead. 
\end{proof}

We consider the case when $E$ is a cubic Galois extension over $\Q$.
Under some assumptions, we prove Deligne's conjecture for the central critical value of 
$L(s,{\rm Sym}^3(f))$, where $L(s,{\rm Sym}^3(f))$ is the motivic $L$-function for ${\rm Sym}^3(f)$
defined by
\[
L(s,{\rm Sym}^3(f))
=
\prod_{p}L\left(s-\frac{w}{2},{\rm Sym}^3(\tau_p)\right).
\]
Here $w=3\kappa'-3$.

\begin{cor}\label{C:algebraicity for symmetric cubic}
Assume $N_1>1$ and there exist a cubic Dirichlet character $\chi$ such that 
$L\left(\frac{\kappa'}{2} ,f\ot\chi\right)\neq 0$. For $\sigma \in {\rm Aut}(\C)$, we have
\begin{align*}
\left(
\frac{L((w+1)/2,{\rm Sym}^3(f))}{\pi^{2\kappa'-1}(\sqrt{-1})^{\kappa'}
\langle f,f\rangle (\Omega_f^{\epsilon})^2} 
\right )^{\sigma}
=
\frac{L((w+1)/2,{\rm Sym}^3(f^{\sigma}))}{\pi^{2\kappa'-1}(\sqrt{-1})^{\kappa'}
\langle f^{\sigma},f^{\sigma}\rangle (\Omega_{f^{\sigma}}^{\epsilon})^2}.
\end{align*}

\end{cor}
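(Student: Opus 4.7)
The plan is to adapt the strategy of Corollary \ref{C:algebraicity for GL_2 times GL_3 balanced case}, replacing the real quadratic field with the totally real cyclic cubic extension $E/\Q$ cut out by $\chi$ (totally real because $\chi(-1)=1$ for any cubic character). Let $\tau_E$ denote the base-change lift of $\tau$ to $\GL_2(\A_E)$, which I shall assume is cuspidal (the generic case by Arthur--Clozel), and let $f_E$ be the associated normalized Hilbert newform of parallel weight $(\kappa',\kappa',\kappa')$. Since $2\kappa'<3\kappa'$ we are in the balanced range, so Corollary \ref{C:algebraicity for triple for balanced case} applies and yields the Galois-equivariance of
\[
\frac{L((w+1)/2,f_E,r)}{\cD_E^{1/2}\,\pi^{w+2}\,\langle f_E,f_E\rangle_{K_0(\wh{\mf{n}})}},\qquad w=3\kappa'-3.
\]
If the triple-product root number is $-1$, both sides of the target identity vanish by the functional equation and the $\mathrm{Aut}(\C)$-invariance of the root number, so I may assume it is $+1$.

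Next I specialize the degree-eight factorization
\[
L(s,f_E)=L(s,\mathrm{Sym}^3(f))\,L(s-\kappa'+1,f\otimes\chi)\,L(s-\kappa'+1,f\otimes\chi^2)
\]
at $s=(w+1)/2=3\kappa'/2-1$, producing the twisted central values $L(\kappa'/2,f\otimes\chi)$ and $L(\kappa'/2,f\otimes\chi^2)$; both are nonzero since $\chi^2=\overline{\chi}$. Shimura's theorem on critical Rankin twists gives, for $i=1,2$,
\[
\left(\frac{L(\kappa'/2,f\otimes\chi^i)}{(2\pi\sqrt{-1})^{\kappa'/2}\,G(\chi^i)\,\Omega_f^{\epsilon}}\right)^{\!\sigma}
=
\frac{L(\kappa'/2,f^{\sigma}\otimes(\chi^i)^{\sigma})}{(2\pi\sqrt{-1})^{\kappa'/2}\,G((\chi^i)^{\sigma})\,\Omega_{f^{\sigma}}^{\epsilon}},
\]
where $\chi(-1)=1$ is used. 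Since $\mathrm{Aut}(\C)$ may permute $\{\chi,\chi^2\}$ but preserves their product, and $G(\chi)G(\chi^2)=\chi(-1)\,\mathrm{cond}(\chi)=\cD_E^{1/2}$, the discriminant factor from the triple formula cancels neatly.

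Finally, to convert $\langle f_E,f_E\rangle$ into a power of $\langle f,f\rangle$, combine Lemma \ref{L:adelic Petersson inner product for GL_2} with the base-change factorization
\[
L(1,\tau_E,\mathrm{Ad})=L(1,\tau,\mathrm{Ad})\,L(1,\mathrm{Ad}(\tau)\otimes\chi)\,L(1,\mathrm{Ad}(\tau)\otimes\chi^2)
\]
and apply Sturm's theorem \cite{Sturm1989} on the algebraicity of adjoint $L$-values under Dirichlet twist, exactly as in the last step of Corollary \ref{C:algebraicity for GL_2 times GL_3 balanced case}. This shows that $\langle f_E,f_E\rangle/\langle f,f\rangle^{3}$ is Galois-equivariant modulo an explicit product of $\pi$-powers, Gauss sums, and $\cD_E$-factors. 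Assembling the three algebraicity inputs and carefully tracking exponents of $\pi$, $\sqrt{-1}$, Gauss sums, and $\cD_E^{1/2}$, the right-hand period collapses to the stated $\pi^{2\kappa'-1}(\sqrt{-1})^{\kappa'}\langle f,f\rangle(\Omega_f^{\epsilon})^{2}$. The main obstacle is this bookkeeping, in particular confirming that the transcendental normalizers from Sturm's comparison conspire with Shimura's Gauss sums and the discriminant factor from the triple formula to leave exactly the stated period; the calculation is a direct, if somewhat tedious, analogue of the quadratic-twist computation in Corollary \ref{C:algebraicity for GL_2 times GL_3 balanced case}.
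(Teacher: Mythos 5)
Your proposal follows essentially the same route as the paper's proof: base change to the totally real cyclic cubic field cut out by $\chi$, apply \corref{C:algebraicity for triple for balanced case} in the balanced range, factor $L(s,f_E)$ into $L(s,\Sym^3(f))$ times the two cubic twists, invoke Shimura for the twisted central values (with $L(\kappa'/2,f\ot\chi^2)=\ol{L(\kappa'/2,f\ot\chi)}\neq 0$), and compare $\langle f_E,f_E\rangle$ with $\langle f,f\rangle^3$ via the adjoint base-change factorization and Sturm; your Gauss-sum identity $G(\chi)G(\chi^2)=\cond(\chi)=\cD_E^{1/2}$ is just the conductor–discriminant observation the paper phrases as ``$\cD_E$ is a square.'' The argument is correct and matches the paper's proof in all essential steps.
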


\begin{proof}
The argument is similar to that of \corref{C:algebraicity for GL_2 times GL_3 unbalanced case} 
and \corref{C:algebraicity for GL_2 times GL_3 balanced case}.
Let $E$ be the cubic Galois extension of $\Q$ associated to $\chi$ by global class filed theory, 
and $\chi_E$ be a idele class character associated to $E/\Q$. 
Let $f_{E}$ be the normalized Hilbert modular newform associated to $\pi_{E}$ and
$\langle f_E,f_E\rangle=\|f_E\|_{K_0(N_1\cO_E)}$ be the Petersson norm of $f_E$.
The factorization 
\[
L(1,\pi_E,{\rm Ad})
=
L(1,\pi,{\rm Ad})
L(1,\pi,{\rm Ad},\chi_E)
L(1,\pi,{\rm Ad},\b{\chi}_E),
\]
toghther with Sturm's result \cite{Sturm1980} yield
\[
\left( 
\frac{\langle f,f\rangle^3}{\langle f_E,f_E\rangle}
\right)^\sigma
=
\frac{\langle f^\sigma,f^\sigma\rangle^3}{\langle (f^\sigma)_E,(f^\sigma)_E\rangle}. 
\]
Using again \cite{Shimura1977}, we have
\begin{align*}
\left( 
\frac{L(\kappa'/2,f\otimes \chi)}
{G(\chi)\pi^{\kappa'/2}(\sqrt{-1})^{\kappa'/2}\Omega^{-\epsilon}_f}
\right)^\sigma
&=
\frac{L(\kappa'/2,f^\sigma\otimes\chi)}
{G(\chi^\sigma)\pi^{\kappa'/2} (\sqrt{-1})^{\kappa'/2}\Omega^{-\epsilon}_{f^\sigma}},\\
\left( 
\frac{L(\kappa'/2,f\ot\b{\chi})}
{G(\b{\chi})\pi^{\kappa'/2}(\sqrt{-1})^{\kappa'/2}\Omega^{-\epsilon}_f}
\right)^\sigma
&=
\frac{L(\kappa'/2,f^\sigma\ot\b{\chi})}
{G(\b{\chi}^\sigma)\pi^{\kappa'/2} (\sqrt{-1})^{\kappa'/2}\Omega^{-\epsilon}_{f^\sigma}},\\
\left(   
\frac{\langle f,f\rangle}{(\sqrt{-1})^{\kappa'-1}\Omega_f^+\Omega_f^-}
\right)^\sigma 
&=
\frac{\langle f^\sigma,f^\sigma\rangle}
{(\sqrt{-1})^{\kappa'-1}\Omega_{f^\sigma}^+\Omega_{f^\sigma}^-} .
\end{align*}
Here $G(\chi)$ (resp. $G(\overline{\chi})$) is the Gauss sum associated to $\chi$ 
(resp. $\overline{\chi}$) defined in \cite{Shimura1977}.  
Notice that since the Hecke field of $f$ is totally real, we have
\[
L\left(\frac{\kappa'}{2},f\ot\b{\chi}\right)
=
\ol{L\left(\frac{\kappa'}{2},f\ot\chi\right)}
\neq 0.
\]
Also, as $E/\Q$ is Galois, $\cD_E$ is a square. The corollary then follows from these equations
together with \corref{C:algebraicity for triple for balanced case} and the factorization  
\[
L\left(\frac{w+1}{2},f_E,r\right)
=
L\left(\frac{w+1}{2},{\rm Sym}^3(f)\right)
L\left(\frac{\kappa'}{2},f\ot\chi\right)
L\left(\frac{\kappa'}{2},f\ot\b{\chi}\right).  
\]
This finishes the proof. 
\end{proof}

\section*{Appdendix : Root numbers and Deligne's periods}
The appendix consists of two parts. In the first part, we explain that the various local root
numbers are invariant under the Galois action. In the second part, we compute the Deligne's 
period of the motive associated to ${\rm Sym}^2(g)\ot f$.  

\subsection*{Root numbers}
Let $F$ be a non-archimedean local field of characteristic zero. Let $E$ be an \etale cubic 
algebra over $F$. Let $D$ be the quaternion division algebra over $F$. The definition of the local 
root numbers in \subsecref{SS:global settings} is valid in more general settings. More precisely,
let $\itPi$ be an irreducible admissible generic representation of $\G(E)$ whose 
central character is trivial on $F^{\x}$. Define $\epsilon(\itPi)\in\stt{\pm 1}$ by the 
following condition
\[
\epsilon(\itPi)=1 \Leftrightarrow {\rm Hom}_{\G(F)}(\itPi,\C)\neq\stt{0}.
\]  
We call $\epsilon(\itPi)$ the (local) root number associated to $\itPi$. We can also define the 
local root number for the archimedean case as we did in the same section, but in terms of the 
category of $\left(\mathfrak{g},K\right)$-modules. The results of Prasad
\cite{Prasad1990}, \cite{Prasad1992} imply that if $\epsilon(\itPi)=-1$, then the 
Jacquet-Langlands lift $\itPi'$ of $\itPi$ to $D^{\x}(E)$ is non-zero, and 
${\rm Hom}_{D^{\x}}(\itPi',\C)\neq\stt{0}$.

Let $\sigma\in{\rm Aut}(\C)$ and $(\pi,V)$ be a representation of a group $G$.
Following \cite[section 1]{Wald1985B}, we define a representation 
$\pi^\sigma$ of $G$ as follows. Let $V'$ be another $\C$-linear space with a
$\sigma$-linear isomorphism $t':V\to V'$. We define
\[
\pi^\sigma(g)=t'\circ\pi(g)\circ t'^{-1},\quad g\in G.
\]
If $\pi=\chi$ is a character, then $\chi^\sigma=\sigma\left(\chi\right)$.

Notice that $\itPi^\sigma$ is an irreducible admissible generic representation of $\G(E)$ with 
central character $\omega^\sigma_\itPi$, where $\omega_\itPi$ is the central character of $\itPi$.
In particular, $\omega_\itPi$ is trivial on $F^{\x}$ if and only if $\omega^\sigma_\itPi$ is.

\begin{lma}\label{L:root number is galois invariant}
For every $\sigma\in{\rm Aut}(\C)$, we have
\[
\epsilon\left(\itPi^\sigma\right)=\epsilon\left(\itPi\right).
\]
\end{lma}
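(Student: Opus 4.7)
The plan is to exhibit a natural bijection between $\Hom_{\GL_2(F)}(\itPi,\C)$ and $\Hom_{\GL_2(F)}(\itPi^{\sigma},\C)$, forcing these two $\C$-vector spaces to vanish or not vanish simultaneously. By the defining property $\epsilon(\itPi)=1 \Leftrightarrow \Hom_{\GL_2(F)}(\itPi,\C)\neq\{0\}$ (and its $(\mathfrak g,K)$-module analogue when $F=\R$), this immediately yields $\epsilon(\itPi^{\sigma})=\epsilon(\itPi)$.

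Concretely, fix a $\sigma$-linear isomorphism $t':V\to V'$ used to manufacture $\itPi^{\sigma}$ from $\itPi$. Given $\ell\in\Hom_{\GL_2(F)}(\itPi,\C)$, I would define
\[
\ell^{\sigma}\;:=\;\sigma\circ\ell\circ(t')^{-1}\;:\;V'\longrightarrow\C.
\]
Since $(t')^{-1}$ is $\sigma^{-1}$-linear and $\sigma$ is $\sigma$-linear, the composition is $\C$-linear: for $\lambda\in\C$ and $v'\in V'$,
\[
\ell^{\sigma}(\lambda v') \;=\; \sigma\bigl(\ell(\sigma^{-1}(\lambda)(t')^{-1}v')\bigr) \;=\; \sigma\bigl(\sigma^{-1}(\lambda)\,\ell((t')^{-1}v')\bigr)\;=\;\lambda\,\ell^{\sigma}(v').
\]
The $\GL_2(F)$-invariance is likewise a direct check from the defining identity $\itPi^{\sigma}(g)=t'\circ\itPi(g)\circ(t')^{-1}$ together with the $\GL_2(F)$-invariance of $\ell$:
\[
\ell^{\sigma}(\itPi^{\sigma}(g)v')\;=\;\sigma\bigl(\ell(\itPi(g)(t')^{-1}v')\bigr)\;=\;\sigma\bigl(\ell((t')^{-1}v')\bigr)\;=\;\ell^{\sigma}(v').
\]
The assignment $\ell\mapsto\ell^{\sigma}$ is a bijection with explicit inverse $\ell'\mapsto\sigma^{-1}\circ\ell'\circ t'$, hence the two Hom spaces have the same dimension.

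Before concluding, I would verify that $\epsilon(\itPi^{\sigma})$ is meaningful, i.e.\ that $\itPi^{\sigma}$ lies in the class of representations on which $\epsilon$ is defined. Irreducibility, admissibility and genericity of $\itPi^{\sigma}$ transport from $\itPi$ via $t'$ (genericity with respect to $\psi$ becomes genericity with respect to $\psi^{\sigma}=\sigma\circ\psi$, but the property of admitting a Whittaker model is independent of the auxiliary character). The central character of $\itPi^{\sigma}$ equals $\sigma\circ\omega_{\itPi}$, which remains trivial on $F^{\x}$ whenever $\omega_{\itPi}|_{F^{\x}}=1$. In the archimedean case the same construction and computation apply verbatim at the level of $(\mathfrak g,K)$-modules. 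The main obstacle is thus essentially bookkeeping: ensuring that the $\sigma$-twist preserves all the structural hypotheses so that both sides of the claimed equality are defined; the bijection of invariant functionals is formal and model-independent.
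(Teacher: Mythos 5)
Your proposal is correct and follows the same route as the paper: the paper's proof consists precisely of the $\sigma$-linear map $\ell\mapsto\sigma\circ\ell\circ t'^{-1}$ identifying $\Hom_{\G(F)}(\itPi,\C)$ with $\Hom_{\G(F)}(\itPi^{\sigma},\C)$, so both spaces vanish simultaneously. Your additional checks (linearity, invariance, the explicit inverse, and that $\itPi^{\sigma}$ satisfies the standing hypotheses) are just a more detailed write-up of the same argument.
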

\begin{proof}
This follows immediately from the definition. Indeed, we have a $\sigma$-linear isomorphism,
\[
{\rm Hom}_{\G(F)}\left(\itPi,\C\right)\stackrel{\sim}{\longrightarrow}
{\rm Hom}_{\G(F)}\left(\itPi^\sigma,\C\right),
\]
defined by $\ell\mapsto\ell':=\sigma\circ\ell\circ t'^{-1}$. This finishes the proof.
\end{proof} 

We have a corollary.
\begin{cora}\label{C:epsilons foctor of symmetric square and cubic are galois invariant}
Let $\psi$ be a non-trivial additive character of $F$.
Let $\pi$ and $\tau$ be two irreducible admissible generic 
representations of $\G(F)$ with central character $\omega_\pi$ and $\omega_\tau$, respectively. 
Let $\sigma\in{\rm Aut}(\C)$.
\begin{itemize}
\item[(1)] Suppose $\omega_\pi^2\cdot\omega_\tau=1$. Then 
$\epsilon\left(1/2,{\rm Sym}^2(\pi)\ot\tau,\psi\right)\in\stt{\pm 1}$ is independent
of $\psi$, and we have
\[
\epsilon\left(\frac{1}{2},{\rm Sym}^2(\pi^\sigma)\ot\tau^\sigma\right)
=
\epsilon\left(\frac{1}{2},{\rm Sym}^2({}\pi)\ot\tau\right).
\]
\item[(2)] Suppose $\omega^3_\pi=1$. Then 
$\epsilon\left(1/2,{\rm Sym}^3(\pi),\psi\right)\in\stt{\pm 1}$ is 
independent of $\psi$, and we have
\[
\epsilon\left(\frac{1}{2},{\rm Sym}^3(\pi^\sigma)\right)
=
\epsilon\left(\frac{1}{2},{\rm Sym}^3(\pi)\right).
\]
\end{itemize}
\end{cora}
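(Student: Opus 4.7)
The plan is to deduce each Galois invariance statement from Lemma A applied to a representation $\Pi$ on $\GL_2(E)$ for $E$ an \etale cubic algebra over $F$, combined with Prasad's local formula comparing the dichotomy root number $\epsilon(\Pi)$ to the automorphic epsilon factor $\epsilon(1/2, \Pi, r, \psi)$, together with the standard factorization of the triple product $L$-function. For part~(1) I would take $E = F \times F \times F$ and $\Pi = \pi \boxtimes \pi \boxtimes \tau$. For part~(2) I would take $E/F$ a cyclic cubic extension with associated character $\chi$ of $F^\times$, and $\Pi = \pi_E$ the local base-change lift of $\pi$. In each case the stated hypothesis on central characters ($\omega_\pi^2 \omega_\tau = 1$ and $\omega_\pi^3 = 1$) is exactly what is needed to ensure $\omega_\Pi|_{F^\times} = 1$, so Lemma A applies and produces $\epsilon(\Pi^\sigma) = \epsilon(\Pi)$.

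Before carrying out the main argument I would verify the $\pm 1$-valuedness and independence of $\psi$. Under the hypotheses, the representations $\mathrm{Sym}^2(\pi)\otimes \tau$ and $\mathrm{Sym}^3(\pi)$, viewed as $\GL_6$- and $\GL_4$-representations respectively, are self-dual with trivial determinant: their central characters are $\omega_\pi^6 \omega_\tau^3 = (\omega_\pi^2\omega_\tau)^3 = 1$ and $\omega_\pi^6 = 1$ respectively, and self-duality follows from the same computation. Hence the functional equation forces the epsilon factor at $s = 1/2$ to lie in $\{\pm 1\}$, and the change-of-$\psi$ formula $\epsilon(1/2, V, \psi_a) = \omega_V(a)\,\epsilon(1/2, V, \psi)$ eliminates dependence on $\psi$.

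The heart of the argument translates Lemma A through the $L$-function factorizations
\[
\epsilon(1/2, \pi \times \pi \times \tau, \psi) = \epsilon(1/2, \mathrm{Sym}^2(\pi)\otimes \tau, \psi)\cdot \epsilon(1/2, \tau \otimes \omega_\pi, \psi),
\]
\[
\epsilon(1/2, \pi_E, r, \psi) = \epsilon(1/2, \mathrm{Sym}^3(\pi), \psi)\cdot \epsilon(1/2, \pi\otimes\chi, \psi)\cdot \epsilon(1/2, \pi\otimes\chi^2, \psi).
\]
Prasad's local formula, extending the identity $\epsilon(\Pi_v) = \chi_{K,v}(-1)\,\epsilon(1/2, \Pi_v, r_v, \psi_v)$ recorded earlier in the paper, identifies $\epsilon(\Pi)$ with $\epsilon(1/2, \Pi, r, \psi)$ up to a product of central and discriminant characters evaluated at $-1$; this correction is manifestly a Galois-invariant element of $\{\pm 1\}$. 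Combined with Lemma A this yields Galois invariance of $\epsilon(1/2, \Pi, r, \psi)$. In part~(1) the auxiliary factor $\epsilon(1/2, \tau \otimes \omega_\pi, \psi)$ involves a $\GL_2$-representation with trivial central character $\omega_\tau \omega_\pi^2 = 1$; it is self-dual and $\pm 1$-valued, so its Galois invariance follows. In part~(2) the product $\epsilon(1/2, \pi\otimes\chi,\psi)\,\epsilon(1/2, \pi\otimes\chi^2, \psi)$ simplifies, when $\omega_\pi = 1$, to $\omega_{\pi\otimes\chi}(-1) = \chi(-1)^2 = 1$ via the standard relation $\epsilon(1/2, \rho, \psi)\,\epsilon(1/2, \tilde\rho, \psi) = \omega_\rho(-1)$ applied to $\rho = \pi \otimes \chi$ and $\tilde\rho = \pi \otimes \chi^2$.

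The main obstacle will be part~(2) in the case that $\omega_\pi$ is a non-trivial cubic character of $F^\times$: then $\pi\otimes\chi$ and $\pi\otimes\chi^2$ are no longer mutually contragredient, and no single choice of cyclic cubic extension $E$ makes both twists simultaneously self-dual. In that case I would appeal to the classical Galois equivariance of $\GL_2$-epsilon factors, available explicitly through Gauss-sum formulas, to handle the auxiliary product directly, or apply Lemma A a second time to a carefully chosen auxiliary triple designed to isolate the relevant $\GL_2$-factor. The bookkeeping of Prasad's correction constants across the various choices of $E$ is another potentially delicate point, but these are always Galois-invariant products of character values at $-1$ and should not cause conceptual difficulty.
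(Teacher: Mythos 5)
Your part (1) follows the paper's route (take $\itPi=\pi\bt\pi\bt\tau$ on $\G(F\x F\x F)$, identify the dichotomy sign $\epsilon(\itPi)$ with $\epsilon(1/2,\itPi,r,\psi)$, use the factorization $\epsilon(s,\itPi,r,\psi)=\epsilon(s,\Sym^2(\pi)\ot\tau,\psi)\,\epsilon(s,\tau\ot\omega_\pi,\psi)$, and invoke Lemma A), but the way you dispose of the auxiliary factor is a non sequitur: from ``$\tau\ot\omega_\pi$ is self-dual with trivial central character, hence $\epsilon(1/2,\tau\ot\omega_\pi)\in\stt{\pm1}$'' it does \emph{not} follow that $\epsilon(1/2,\tau^\sigma\ot\omega_\pi^\sigma)=\epsilon(1/2,\tau\ot\omega_\pi)$ --- these are epsilon factors of two different representations, and $\pm1$-valuedness alone does not compare them. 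What is needed (and what the paper uses) is the Galois equivariance of $\GL(2)$ epsilon factors, $\epsilon(1/2,\tau^\sigma\ot\omega_\pi^\sigma,\psi^\sigma)=\sigma\bigl(\epsilon(1/2,\tau\ot\omega_\pi,\psi)\bigr)$ (Waldspurger, Prop.\ I.2.5 of the cited reference); only then does $\pm1$-valuedness and $\psi$-independence give equality. You do mention such equivariance later, but only as a fallback for part (2), so as written this step in part (1) is unjustified (though easily repaired by a citation).

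Part (2) is where the real gap lies. Your choice $\itPi=\pi_E$ for a cyclic cubic $E/F$ forces you to control the auxiliary product $\epsilon(1/2,\pi\ot\chi,\psi)\,\epsilon(1/2,\pi\ot\chi^2,\psi)$, and you yourself concede that when $\omega_\pi$ is a nontrivial cubic character the two twists are not contragredient to each other and your argument stops at a sketch (``handle directly by Gauss sums, or apply Lemma A to a carefully chosen auxiliary triple''); note also that in this case the product has central character $\omega_\pi^2\neq1$, so it is not even $\psi$-independent, and $\sigma$ permutes $\chi$ and $\chi^2$, so the bookkeeping is genuinely more delicate than indicated. Moreover, applying Lemma A to $\pi_E$ silently requires $(\pi_E)^\sigma\cong(\pi^\sigma)_E$, i.e.\ compatibility of local cyclic base change with ${\rm Aut}(\C)$-conjugation, which you do not address. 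The paper avoids all of this: for (2) one again takes $E=F\x F\x F$ and $\itPi=\pi\bt\pi\bt\pi$, so that $\epsilon(s,\itPi,r,\psi)=\epsilon(s,\Sym^3(\pi),\psi)\,\epsilon(s,\pi\ot\omega_\pi,\psi)^2$; since $\omega_\pi^3=1$ makes $\pi\ot\omega_\pi$ self-dual with trivial central character, its epsilon factor is a sign and the square is $1$ for both $\pi$ and $\pi^\sigma$, so $\epsilon(1/2,\Sym^3(\pi),\psi)=\epsilon(\itPi)$ is Galois-invariant directly by Lemma A, with no base change, no case split on $\omega_\pi$, and (for this part) no appeal to Waldspurger at all. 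You should either adopt that choice of $\itPi$ or actually carry out the missing arguments for your cubic-field route.
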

\begin{proof}
We only prove $(1)$ since the proof of $(2)$ is similar. Let $\itPi=\pi\bt\pi\bt\tau$. By the results
of \cite[section 8]{Prasad1990}, \cite[Theorem 1.2]{WTG2008} and \cite[Theorem 4.4.1]{Rama2000},
$\epsilon\left(1/2,\itPi,r,\psi\right)\in\stt{\pm 1}$ is independent of $\psi$ and 
\[
\epsilon(\itPi)=\epsilon\left(\frac{1}{2},\itPi,r\right).
\]
Since $\tau\ot\chi_{\pi}$ is self-dual, we have 
$\epsilon\left(1/2,\tau\ot\omega_\pi,\psi\right)\in\stt{\pm 1}$ is independent of $\psi$. 
By the factorization 
\[
\epsilon\left(s,\itPi,r,\psi\right)
=
\epsilon\left(s,\tau\ot\chi\omega_{\pi},\psi\right)
\epsilon\left(s,{\rm Sym}^2(\pi)\ot\tau,\psi\right),
\]
we see that 
$\epsilon\left(1/2,{\rm Sym}^2(\pi)\ot\tau,\psi\right)\in\stt{\pm 1}$ is also independent of $\psi$.

By the lemma A, we only need to show
\[
\epsilon\left(\frac{1}{2},\tau^\sigma\ot\omega^\sigma_\pi\right)
=
\epsilon\left(\frac{1}{2},\tau\ot\omega_\pi\right).
\] 
But this is a result of 
\cite[Proposition I.2.5]{Wald1985B}, which said
\begin{align*}
\epsilon\left(\frac{1}{2},\tau^\sigma\ot\omega^\sigma_\pi\right)
=
\epsilon\left(\frac{1}{2},\tau^\sigma\ot\omega^\sigma_\pi,\psi^\sigma\right)
=
\sigma\left(\epsilon\left(\frac{1}{2},\tau\ot\omega_\pi,\psi\right)\right)
=
\epsilon\left(\frac{1}{2},\tau\ot\omega_\pi\right),
\end{align*}
where $\psi^\sigma=\sigma\circ\psi$. This completes the proof.
\end{proof}

\subsection*{Deligne's periods}
Notations being the same as in the previous section. In \cite{Yoshida2001}, H. Yoshida 
define fundamental periods of a pure motive over $\Q$ whose construction including Deligne's periods. 
In particular, Yoshida give a formula for Deligne's periods of the tensor product of two pure motives 
over $\Q$ in terms of the fundamental periods of the two motives. 
Specializing the formula of Yoshida, C. Bhagwat give a more explicit formula in 
\cite{Bhagwat2014} for pure motive whose nonzero Hodge numbers are one.
In this section we use formula in \cite{Bhagwat2014} to compute Deligne's periods of the motive 
associated to ${\rm Sym}^2(g)\otimes f$. It turns out that there are no  fundamental periods 
other than Deligne's periods in our case. Let $M(f)$ and $M(g)$ be the motives over $\Q$ with 
coefficients in $\Q(f)$ and $\Q(g)$, respectively. For their construction, see 
\cite{Scholl1990}. We consider the symmetric square ${\rm Sym}^2M(g)$ 
(resp. the symmetric cube ${\rm Sym}^3M(f)$) of the motive $M(g)$ 
(resp. $M(f)$). We follow \cite{Deligne1979} and \cite{Yoshida2001} for the conventions 
and notations. All motives below have coefficients in $\Q(f,g)$, and we write $\sim$ for the 
equivalence relation defined by $\Q(f,g)^{\times}$.

In \cite{Bhagwat2014}, the exponent of $(c^+(M')c^-(M'))$ in Theorem 3.2 should be $a_{k'}^*-k$ 
in stead of $a_{k'}^*-k-1$.

\begin{propa}\label{P:Deligne's periods}
We have
\begin{align*}
c^{\pm}({\rm Sym}^2M(g)\otimes M(f))
&= 
\left \{ \begin{array}{ll} (2\pi\sqrt{-1})^{3-3\kappa}(\sqrt{-1})^{1-\kappa'}
\langle f,f\rangle \Omega_f^{\pm} & \mbox{ if }\kappa' \geq 2\kappa,\\
(2\pi\sqrt{-1})^{2-\kappa-\kappa'}
\langle g,g\rangle^2
\Omega_f^{\pm} & \mbox{ if }2\kappa>\kappa'. 
\end{array} \right .\\
c^{\pm}({\rm Sym}^3M(f))
&=
(2\pi\sqrt{-1})^{1-\kappa'}(\sqrt{-1})^{1-\kappa'}\langle f,f\rangle (\Omega_f^{\pm})^2.
\end{align*}
\end{propa}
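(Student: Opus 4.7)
\section*{Proof proposal for Proposition A}

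The plan is to apply the formula of Bhagwat \cite{Bhagwat2014} (specializing Yoshida's general framework \cite{Yoshida2001}) for Deligne's periods of tensor products of pure motives whose nonzero Hodge numbers are all equal to one. First I will record the Hodge structures involved: $M(f)$ has Hodge type $(\kappa'-1,0)+(0,\kappa'-1)$ with weight $\kappa'-1$, and $M(g)$ has Hodge type $(\kappa-1,0)+(0,\kappa-1)$ with weight $\kappa-1$. Consequently ${\rm Sym}^2 M(g)$ has Hodge type $(2\kappa-2,0)+(\kappa-1,\kappa-1)+(0,2\kappa-2)$ with weight $2\kappa-2$, and ${\rm Sym}^3 M(f)$ has Hodge type $(3\kappa'-3,0)+(2\kappa'-2,\kappa'-1)+(\kappa'-1,2\kappa'-2)+(0,3\kappa'-3)$ with weight $3\kappa'-3$. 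In each case I will verify that $s=(w+1)/2$ is a critical value by checking that no Hodge type $(p,q)$ of the tensor product satisfies $p=q$ and sits at the critical line, so the recipe of Deligne applies directly.

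Next I will identify the fundamental periods of the building blocks. For the rank-two motive $M(f)$, the two fundamental periods coincide with Shimura's periods $\Omega_f^\pm$ up to factors in $\Q(f)^{\times}$, while the determinant period satisfies $\delta(M(f))\sim (2\pi\sqrt{-1})^{1-\kappa'}\cdot(\text{something})$ coming from the fact that $\det M(f)$ is a Tate twist of the trivial motive by the Nebentypus. For ${\rm Sym}^2 M(g)$ the rank-two summand of Hodge types $(2\kappa-2,0)+(0,2\kappa-2)$ contributes periods $\Omega_{{\rm Sym}^2(g)}^\pm$, and a standard comparison (going back to Sturm \cite{Sturm1980} and used in Bhagwat) gives $\Omega_{{\rm Sym}^2(g)}^+\sim (2\pi\sqrt{-1})^{1-\kappa}\langle g,g\rangle$ up to rationality; the $(\kappa-1,\kappa-1)$ component contributes a separate factor which matches one copy of $(2\pi\sqrt{-1})^{1-\kappa}$. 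Similarly ${\rm Sym}^3 M(f)$ decomposes, and the relevant fundamental periods will be expressible in terms of $\Omega_f^\pm$ and $\langle f,f\rangle$ through the identity $\Omega_f^+\Omega_f^-\sim (2\pi\sqrt{-1})^{1-\kappa'}\langle f,f\rangle$.

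Then I will execute Bhagwat's formula in each case. For ${\rm Sym}^2 M(g)\otimes M(f)$ the formula expresses $c^{\pm}$ as a product of powers of the fundamental periods of the two factors, with exponents determined by how the Hodge filtrations interlace; the splitting into the cases $\kappa'\geq 2\kappa$ and $2\kappa>\kappa'$ reflects which Hodge indices of the tensor product lie above the middle and which below, and this is precisely the combinatorial step carried out in \cite[Theorem 3.2]{Bhagwat2014} (after correcting the typo in the exponent $a_{k'}^*-k$ noted in the excerpt). The exponent of $(2\pi\sqrt{-1})$ in each case is read off by adding up $-(p+q-w)/2$ over the critical Hodge types below the middle, which yields $3-3\kappa$ in the unbalanced range and $2-\kappa-\kappa'$ in the balanced range. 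The factor $(\sqrt{-1})^{1-\kappa'}$ respectively $(\sqrt{-1})^{1-\kappa'}$ arises from the interaction of the $F_\infty$-action on the Hodge pieces, as tracked by Yoshida's fundamental periods.

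The main obstacle, in my view, is the careful bookkeeping of the sign/power-of-$\sqrt{-1}$ contribution and the verification that the ``rational'' ambiguity in the fundamental periods of ${\rm Sym}^2 M(g)$ really combines with $\Omega_f^{\pm}$ to give $\Omega_f^{\epsilon}$ with $\epsilon=(-1)^{\kappa'/2-1}$ rather than $-\epsilon$; this boils down to determining the parity of the sign eigenspace to which $F_\infty$ restricts nontrivially on each Hodge $(p,q)$-piece. Once that accounting is done, the analogous but simpler computation for ${\rm Sym}^3 M(f)$ is purely formal: the critical line $(w+1)/2 = (3\kappa'-2)/2$ lies in the balanced range of ${\rm Sym}^3 M(f)$, so Bhagwat's formula gives directly $c^{\pm}\sim (2\pi\sqrt{-1})^{1-\kappa'}(\sqrt{-1})^{1-\kappa'}\langle f,f\rangle(\Omega_f^\pm)^2$, using $\Omega_f^+\Omega_f^-\sim (2\pi\sqrt{-1})^{1-\kappa'}\langle f,f\rangle$ to rewrite the product of all three fundamental periods in the asserted form.
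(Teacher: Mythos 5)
Your plan follows the same route as the paper's appendix: write down the Hodge data of $M(f)$ and ${\rm Sym}^2M(g)$, split into the ranges $\kappa'\geq 2\kappa$ and $2\kappa>\kappa'$ according to how the two Hodge filtrations interlace, and feed Yoshida/Bhagwat's fundamental periods into \cite[Theorem 3.2]{Bhagwat2014} (with the corrected exponent), the symmetric-square periods coming from Deligne's Prop.~7.7. However, the explicit identities you propose to use are wrong as stated, and since the whole content of the proposition is the exact powers of $2\pi\sqrt{-1}$ and $\sqrt{-1}$, this is a genuine gap rather than cosmetic. (i) Your recipe for the exponent of $2\pi\sqrt{-1}$ --- summing $-(p+q-w)/2$ over Hodge types below the middle --- is vacuous: every Hodge type of the pure weight-$w$ motive $N$ has $p+q=w$, so the sum is $0$. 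What actually produces $3-3\kappa$ in the unbalanced range is the determinant period $\delta({\rm Sym}^2M(g))\sim c^+(M')c^-(\check{M'})^{-1}=\delta(M(g))\,(2\pi\sqrt{-1})^{-(2\kappa-2)}\sim(2\pi\sqrt{-1})^{3-3\kappa}$, while $2-\kappa-\kappa'$ in the balanced range comes from $\delta(M(f))=(2\pi\sqrt{-1})^{1-\kappa'}$ together with $c^+(M')c^-(M')=(c^+(M(g))c^-(M(g)))^2\delta(M(g))$. (ii) Your normalizations are inconsistent with the statement: one needs $\Omega_f^+\Omega_f^-\sim(\sqrt{-1})^{1-\kappa'}\langle f,f\rangle$ and $c^+(M(g))c^-(M(g))\sim(\sqrt{-1})^{1-\kappa}\langle g,g\rangle$, not $\Omega_f^+\Omega_f^-\sim(2\pi\sqrt{-1})^{1-\kappa'}\langle f,f\rangle$; substituting your relation in the symmetric-cube computation yields $(2\pi\sqrt{-1})^{2-2\kappa'}\langle f,f\rangle(\Omega_f^\pm)^2$, off by $(2\pi)^{\kappa'-1}$, and your $\Omega^+_{{\rm Sym}^2(g)}\sim(2\pi\sqrt{-1})^{1-\kappa}\langle g,g\rangle$ drops the odd power $(\sqrt{-1})^{1-\kappa}$ forced by $c^+({\rm Sym}^2M(g))=c^+(M(g))c^-(M(g))\delta(M(g))$.

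(iii) Bhagwat's Theorem 3.2 is a formula for a tensor product of two motives; it does not apply ``directly'' to ${\rm Sym}^3M(f)$, which is not presented as such a product. The paper gets this case straight from Deligne's Prop.~7.7, which for a rank-two motive gives $c^\pm({\rm Sym}^3M(f))\sim c^\pm(M(f))^2\,c^+(M(f))c^-(M(f))\,\delta(M(f))$, and with the correct relation $c^+(M(f))c^-(M(f))\sim(\sqrt{-1})^{1-\kappa'}\langle f,f\rangle$ this is exactly $(2\pi\sqrt{-1})^{1-\kappa'}(\sqrt{-1})^{1-\kappa'}\langle f,f\rangle(\Omega_f^\pm)^2$. (You could instead apply Bhagwat to $M(f)\otimes{\rm Sym}^2M(f)$ and divide out the $c^\pm$ of the Tate-twisted rank-two summand, but that reduction has to be stated; as written your step for the symmetric cube does not parse.) So the architecture of your argument is the right one, but the bookkeeping steps you actually wrote down would not reproduce the constants being claimed, and fixing them is precisely the content of the proof.
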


\begin{proof}
Put $M=M(f)$, $M'={\rm Sym}^2M(g)$, and $N=M\otimes M'$. 
By Prop. 7.7 in \cite{Deligne1979}, we have
\begin{align*}
\mathsf{w}(M')&=2\kappa-2 ,&  d^+(M')&=2 , & d^{-}(M')&=1,\\
c^+(M')&=c^+(M(g))c^-(M(g))\delta (M(g)), & c^+(M')&=c^+(M(g))c^-(M(g)).
\end{align*}
Since $L_{\infty}(M',s)
=
\zeta_{\C}(s)\zeta_{\R}(s-\kappa+2)$. We have
\begin{align*}
H_B(M')\otimes_{\Q}\C 
&= 
H^{0,2\kappa-2}(M')\oplus H^{\kappa-1,\kappa-1}(M')\oplus H^{2\kappa-2,0}(M'),\\
H_{DR}(M')
&=
F^{0}(M')\supsetneq F^{\kappa-1}(M') \supsetneq F^{2\kappa-2}(M') \supsetneq \{0\}.
\end{align*}
It is well known that
\begin{align*}
\mathsf{w}(M)
&=
\kappa'-1 , & d^{+}(M)&=1, & d^{-}(M)
&=1,\\
c^{\pm}(M)
&=\Omega_f^{\pm}, & \delta(M)&=(2\pi \sqrt{-1})^{1-\kappa'} , & c^+(M)c^-(M)
&=(\sqrt{-1})^{1-\kappa'}\langle f,f\rangle . \\
\end{align*}
The Hodge decomposition and the Hodge filtration are given by
\begin{align*}
H_B(M)\otimes_{\Q}\C 
&= 
H^{0,\kappa'-1}(M)\oplus H^{\kappa'-1,0}(M) , \\
H_{DR}(M)
&=
F^{0}(M) \supsetneq F^{\kappa'-1}(M) \supsetneq \{0\}.
\end{align*}
For the motive $N$, we have $\mathsf{w}(N)=2\kappa+\kappa'-3=w$, and $d^{\pm}(N)=3$. 
(Since $d^{\pm}(N)=d^+(M)d^+(M')+d^-(M)d^-(M')=3$.) Following the notation in \S 3 
of \cite{Bhagwat2014}, we have
\begin{align*}
p_1&=0 , & p_2&=\kappa'-1 , \\
q_1&=0, & q_2 &=\kappa-1, & q_3&=2\kappa-2, \\
k&=1 , & k' &=1, & \epsilon(M')&=1.
\end{align*}
Note that $\mathcal{P}=\mathcal{P}'=\emptyset.$
\\Assume $\kappa' \geq 2\kappa$. Then we have  
\[
L_{\infty}(N,s)
=
\zeta_{\C}(s)\zeta_{\C}(s-(\kappa-1))\zeta_{\C}(s-(2\kappa-2)).
\]
Therefore, 
\begin{align*}
H_B(N)\otimes_{\Q}\C 
&= 
H^{0,\omega}(N) \oplus H^{\kappa-1,2\kappa'-2}(N) \oplus H^{2\kappa-2,\kappa'-1}(N) 
\oplus H^{\kappa'-1,2\kappa-2}(N)\oplus H^{2\kappa'-2,\kappa-1}(N) \oplus H^{\omega,0}(N),\\
H_{DR}(N)
&=
F^{0}(N) \supsetneq F^{\kappa-1}(N) \supsetneq F^{2\kappa-2}(N) \supsetneq F^{\kappa'-1}(N) 
\supsetneq F^{2\kappa'-2}(N)  \supsetneq F^{\omega}(N) \supsetneq \{0\}.
\end{align*}
In the notation of \S\S 2.2 in \cite{Bhagwat2014}, we have $k_0=3$ and $r_3=2\kappa-2$. 
Thus $a_1=3$, $a_2=0$, $a_1^*=1$, and $a_3^*=1$. Note that $\check{M'}=M'(2\kappa-2)$. 
Therefore, by equation  (5.1.7) in \S 5 of \cite{Deligne1979}, we have
\begin{align*}
\delta(M') &\sim c^+(M')c^-(\check{M'})^{-1}\\
&=c^+(M')c^-(M')^{-1}(2\pi \sqrt{-1})^{-(2\kappa-2)d^-(M')}\\
&=\delta(M(g)) (2\pi \sqrt{-1})^{-(2\kappa-2)d^-(M')}\\
&=(2\pi \sqrt{-1})^{3-3\kappa}.
\end{align*}
By Theorem 3.2 in \cite{Bhagwat2014}, we have
\begin{align*}
c^{\pm}(N)&=c^{\pm}(M)\delta(M')(c^+(M)c^-(M))\\
&=c^{\pm}(M)(2\pi \sqrt{-1})^{3-3\kappa} (c^+(M)c^-(M))\\
&=(2\pi\sqrt{-1})^{3-3\kappa}(\sqrt{-1})^{1-\kappa'}\langle f,f\rangle \Omega_f^{\pm}.
\end{align*}
\\Assume $2\kappa > \kappa'$. Then we have  
\[
L_{\infty}(N,s)
=
\zeta_{\C}(s)\zeta_{\C}(s-(\kappa-1))\zeta_{\C}(s-(\kappa'-1)).
\]
If $\kappa<\kappa'$, then
\begin{align*}
H_B(N)\otimes_{\Q}\C 
&= 
H^{0,\omega}(N) \oplus H^{\kappa-1,\kappa+\kappa'-2}(N) \oplus H^{\kappa'-1,2\kappa-2}(N) 
\oplus H^{2\kappa-2,\kappa'-1}(N)\oplus H^{\kappa+\kappa'-2,\kappa-1}(N) \oplus H^{\omega,0}(N),\\
H_{DR}(N)
&=
F^{0}(N) \supsetneq F^{\kappa-1}(N) \supsetneq F^{\kappa'-1}(N) \supsetneq F^{2\kappa-2}(N) 
\supsetneq F^{\kappa+\kappa'-2}(N)  \supsetneq F^{\omega}(N) \supsetneq \{0\}.
\end{align*}
In this case, we have $k_0=3$ and $r_3=\kappa'-1$.
\\If $\kappa>\kappa'$, then
\begin{align*}
H_B(N)\otimes_{\Q}\C 
&= 
H^{0,\omega}(N) \oplus H^{\kappa'-1,2\kappa-2}(N) \oplus H^{\kappa-1,\kappa+\kappa'-2}(N) 
\oplus H^{\kappa+\kappa'-2,\kappa-1}(N)\oplus H^{2\kappa-2,\kappa'-1}(N) \oplus H^{\omega,0}(N),\\
H_{DR}(N)
&=
F^{0}(N) \supsetneq F^{\kappa'-1}(N) \supsetneq F^{\kappa-1}(N) \supsetneq F^{\kappa+\kappa'-2}(N) 
\supsetneq F^{2\kappa-2}(N)  \supsetneq F^{\omega}(N) \supsetneq \{0\}.
\end{align*}
In this case, we have $k_0=3$ and $r_3=\kappa-1$.
\\If $\kappa=\kappa'$, then
\begin{align*}
H_B(N)\otimes_{\Q}\C 
&= 
H^{0,\omega}(N)\oplus H^{\kappa-1,2\kappa-2}(N)\oplus H^{2\kappa-2,\kappa-1}(N)
\oplus H^{\omega,0}(N),\\
H_{DR}(N)
&=
F^0(N) \supsetneq F^{\kappa-1}(N) \supsetneq F^{2\kappa-2}(N) \supsetneq F^{\omega}(N) 
\supsetneq \{0\}.
\end{align*}
In this case, we have $k_0=2$ and $r_2=\kappa-1$.\\
In all cases, we have $a_1=2$, $a_2=1$, $a_1^*=2$, and $a_3^*=0$.
By Theorem 3.2 in \cite{Bhagwat2014}, we have
\begin{align*}
c^{\pm}(N)
&=c^{\pm}(M)\delta(M)(c^+(M)c^-(M))\\
&\sim \Omega_f^{\pm}(2\pi \sqrt{-1})^{1-\kappa'}(c^+(M(g)c^-(M(g)))^2\delta(M(g))\\
&\sim \Omega_f^{\pm}(2\pi \sqrt{-1})^{1-\kappa'}(\sqrt{-1})^{2-2\kappa}
\langle g,g\rangle^2 (2\pi \sqrt{-1})^{1-\kappa}\\
&\sim (2\pi \sqrt{-1})^{2-\kappa-\kappa'}\langle g,g\rangle^2\Omega_f^{\pm}.
\end{align*}
The formula for $c^{\pm}({\rm Sym}^3M(f))$ follows from Prop. 7.7 in 
\cite{Deligne1979}. This completes the proof.
\end{proof}

\bibliographystyle{alpha}
\bibliography{ref}
\end{document}